\numberwithin{equation}{section}
\newcommand{\HH}{\mathcal{H}}
\newcommand{\HM}{\mathcal{M}}
\newcommand{\HL}{\mathcal{L}}
\newcommand{\HA}{\mathcal{A}}
\newcommand{\D}{\mathbb{D}}
\newcommand{\C}{\mathbb{C}}
\newcommand{\R}{\mathbb{R}}
\newcommand{\Z}{\mathbb{Z}}
\theoremstyle{plain}
\newtheorem{theorem}{Theorem}[section]
\newtheorem{lemma}[theorem]{Lemma}
\newtheorem{prop}[theorem]{Proposition}
\newtheorem{corollary}[theorem]{Corollary}
\theoremstyle{definition}
\begin{document}
\date{}

\author{Caixing Gu, Shuaibing Luo and Jie Xiao}
\address{Department of Mathematics, California Polytechnic State University, San Luis Obispo, CA 93407, USA}
\email{cgu@calpoly.edu}

\address{College of Mathematics and Econometrics, Hunan University, Changsha, Hunan, 410082, PR China}
\email{shuailuo2@126.com}

\address{Department of Mathematics and Statistics, Memorial University, St. John's, NL A1C5S7, Canada}
\email{jxiao@mun.ca}

\subjclass[2010]{47B35, 46E22}
\keywords{Reducing subspace; Multiplication operator; Dirichlet space; Blaschke product; Local inverse; Riemann surface.}
\thanks{The second author is supported by the Young Teacher Program of Hunan University; the third author is supported by NSERC of Canada}

\title[Reducing subspaces for multiplication operators on the Dirichlet space]
{Reducing subspaces for multiplication operators on the Dirichlet space through local inverses and Riemann surface}

\begin{abstract}
This paper is devoted to the study of reducing subspaces for multiplication operator $M_\phi$ on the Dirichlet space with symbol of finite Blaschke product. The reducing subspaces of $M_\phi$ on the Dirichlet space and Bergman space are related. Our strategy is to use local inverses and  Riemann surface to study the reducing subspaces of $M_\phi$ on the Bergman space, and we discover a new way to study the Riemann surface for $\phi^{-1}\circ\phi$. By this means, we determine the reducing subspaces of $M_\phi$ on the Dirichlet space when the order of $\phi$ is $5$; $6$; $7$ and answer some questions of Douglas-Putinar-Wang \cite{DPW12}.
\medskip

\end{abstract}

\maketitle
\section*{Introduction}\label{s0}

Let $\D$ be the open unit disk in the complex plane and let $dA$ be the normalized area measure on the unit disk $\D$. For $\gamma\in [-1/2,1/2]$ let $\mathcal{D}_\gamma$ be the Dirichlet type space of all analytic functions $f$ on $\D$ with
$$
\int_{\D}|f'(z)|^2(1-|z|^2)^{1-2\gamma}\,dA(z)<\infty;
$$
see also \cite{St, X}. In particular, $\mathcal{D}_{-1/2}, \mathcal{D}_0$, and $\mathcal{D}_{1/2}$ are the Bergman space $B^2:=L^2_a(\D)$ of all analytic functions on $\D$ that are square integrable under $dA$, the Hardy space $H^2:=H^2(\D)$ of all analytic functions on $\D$ whose radial boundary values are square integrable under the arc-length on the unit circle $\mathbb T$, and the Dirichlet space $D^2:=D^2(\D)$ of analytic functions $f$ whose derivatives are square integrable under $dA$, respectively. Clearly,
$D^2\subset H^2\subset B^2$; this, along with the fact that $f\in D^2$ amounts to $f'\in B^2$, indicates that $B^2$ and $D^2$ may be viewed as two spaces symmetric about $H^2$.

If $\HH = B^2, H^2, D^2$, then a function $\phi$ is called a multiplier of $\HH$ if $\phi \HH \subseteq \HH$. Denote by $M(\HH)$ the multipliers of $\HH$. It follows from the closed graph theorem that every function $\phi \in M(\HH)$ induces a bounded linear operator $M_\phi: f \mapsto \phi f$ on $\HH$. The study of invariant subspaces or reducing subspaces for various operators on analytic function spaces has inspired much deep research. The structure of the invariant subspaces of the shift operator $M_z$ on $H^2, B^2$ and $D^2$ are different. On the Hardy space $H^2$, the famous Beurling theorem \cite{Be49} completely characterizes the invariant subspaces of $M_z$. On the Bergman space $B^2$, the lattice of invariant subspaces of $M_z$ is huge and its order structure is unknown \cite{BFP85}. On the Dirichlet space $D^2$, the invariant subspaces are studied in many papers, see e.g. \cite{BS84, EKR06, EKR09, RSa88, R911, RS91, RS92}, and there are open questions about the structure of the invariant subspaces of $D^2$ (\cite{EKR09}).

For a multiplier $\phi$ of $\HH$, a closed subspace $\HM \subseteq \HH$ is called a reducing subspace of $M_\phi$ provided that $\HM$ is invariant for both $M_\phi$ and $M_\phi^*$ (the adjoint operator of $M_\phi$). Let
$$
\varphi_\alpha(z) = \frac{\alpha- z}{1 - \overline{\alpha}z} \quad\forall\quad \alpha\in \D
$$
be the M\"{o}bius transformation sending $\alpha$ to the origin, and let $\psi = \varphi_{\alpha_1} \cdots \varphi_{\alpha_n}$ be the finite Blaschke product with zeros $\alpha_1, \cdots, \alpha_n \in \D$, then $\psi$ is a multiplier of $\HH$. The study of reducing subspaces of $M_\psi$ on $H^2$ was completed in the seventies (see \cite {Co78, Th76, Th77}). Investigation of reducing subspaces of $M_\psi$ on $B^2$ was started recently, see e.g. \cite{DSZ11, DPW12, GH11, GSZZ09, SlW98, SZZ10, Zh00}. See also \cite{Gu, Gu2, LZ, SL} for characterizations of reducing subspaces of $M_{\psi}$ for $\psi$ being a monomial of several variables on the Bergman space of polydisk or unit ball. The structure of reducing subspaces on $B^{2}$ was completely characterized in \cite{DPW12}. To be more precise, let $\mathsf{B}(B^2)$ be the class of all bounded linear operators on $B^2$ and
$$
\{M_\psi\}' = \{T \in \mathsf{B}(B^2): TM_\psi = M_\psi T\}
$$
be the commutant of $M_\psi$. Note that there is a one to one correspondence between the reducing subspaces of $M_\psi$ and the projections in $\{M_\psi\}'$, the problem of classifying the reducing subspaces of $M_\psi$ is reduced to studying the projections in $\{M_\psi\}'$. If
$$
\HA_\psi = \{M_\psi, M_\psi^*\}' \subset \mathsf{B}(B^2),
$$
then \cite{DPW12} says that $\HA_\psi$ is a commutative von Neumann algebra of dimension $q$, where $q$ is the number of connected components of the Riemann surface $S_\psi$ of $\psi^{-1}\circ \psi$. However, how to describe the reducing subspaces of $M_\psi$ on $B^2$ is challenging. The authors in \cite{DPW12} introduce the dual partition, then they use the dual partition to provide a indirect description of the reducing subspaces. But this description does not exhibit all the information of the reducing subspaces. In this paper, we refine their description by using local inverses of $\psi$ and Riemann surface $S_\psi$. By this means, we obtain more information of the reducing subspaces. For an open set $V \subseteq \D$, a local inverse of $\psi$ in $V$ is a function $\rho$ analytic in $V$ which satisfies $\rho(V) \subseteq \D$ and $\phi(\rho(z)) = \phi(z)$ on $V$. Note that the family of local inverses $\{\rho_0, \cdots, \rho_{n-1}\}$ has a group-like property under composition near the boundary of $\D$. This group-like property of $\{\rho_0, \cdots, \rho_{n-1}\}$, together with the property of the Riemann surface $S_\psi$ enables us to describe the reducing subspaces of $M_\psi$, see sections 2 and 6.

There are only a few results of the reducing subspaces of $M_\psi$ on $D^2$ (\cite{CL14, Zh09, Luo}). We need some definitions before we proceed. $M_\psi$ is called reducible if $M_\psi$ has a nontrivial reducing subspace. A reducing subspace $\HM$ of $M_\psi$ is called minimal if there is no nontrivial reducing subspace for $M_\psi$ contained in $\HM$. Two finite Blaschke products $\phi$ and $\psi$ are called equivalent if there are $w\in \D, |a| = 1$ such that $\psi = a \varphi_w \circ \phi$. Let $$
\widetilde{\HA_\psi} = \{M_\psi, M_\psi^*\}' \subset \mathsf{B}(D^2).
$$
Both \cite{Zh09} and \cite{CL14} showed that if the order of $\psi$ is $2$ and $\psi$ is not equivalent to $z^2$ then $M_\psi$ is irreducible on $D^2$, i.e. $\dim \widetilde{\HA_\psi} = 1$. Their results also reveal that the study of the reducing subspaces of $M_\psi$ on $D^2$ is difficult. One reason is that the inner product involves the derivatives which make the calculation complicated; the other reason is that $M_\psi$ is not a subnormal operator, we cannot use the theory of subnormal operators. In 2016, the second author in \cite{Luo} established a connection between the reducing subspaces on $D^2$ and on $B^2$, and studied the reducing subspaces of $M_\psi$ for no more than $4$ zeros. Let $U: D^2\rightarrow B^2$ be defined by $Uf = (zf)'$, then $U$ is a unitary operator. Luo \cite{Luo} showed that if $\HM$ is a reducing subspace of $M_\psi$ on $D^2$, then $U\HM$ is a reducing subspace of $M_\psi$ on $B^2$. This result is the key to investigate the reducing subspaces on $D^2$.

This paper is motivated by the idea in \cite{Co78, Th76, Th77, DSZ11} of using local inverses, analytic continuation and Riemann surface to study reducing subspaces. In this paper we determine the dimension of $\widetilde{\HA_\psi}$ on $D^2$ for general Blaschke product $\psi$. We then show that when the order $n$ of $\psi$ is $5$ or $7$, $M_\psi$ is reducible on $D^2$ if and only if $\psi$ is equivalent to $z^n$ - see Theorems \ref{rdopo5ubp1} \& \ref{rdop7m2p1}. We also give a description of the reducing subspaces of $M_\psi$ on $D^2$ for $\psi$ with $6$ zeros - see Theorem \ref{rdcwohsut}. In order to obtain those three theorems, we are led to not only deal with local inverses and von Neumann algebra - see Proposition \ref{vaiddatlms} and its Corollary \ref{irddpan}, but also handle partition and Riemann surface - see Propositions \ref{pezfpa}, \ref{ptwpztab}, \ref{ptfpezsabg} and \ref{ptwpstrs}, and Theorem \ref{urstdpopf}. Theorem \ref{urstdpopf} also answers a question in \cite{DPW12}.

The rest of this paper consists of the following six sections:

\begin{itemize}
\item[1.] Local inverses and von Neumann algebra;
\item[2.] Partition and Riemann surface;
\item[3.] Reducible $M_\phi$ on $D^2$ with $\phi$ being of order $5$;
\item[4.] Reducible $M_\phi$ on $D^2$ with $\phi$ being of order $6$;
\item[5.] Reducible $M_\phi$ on $D^2$ with $\phi$ being of order $7$;
\item[6.] Concluding remarks.
\end{itemize}

\section{Local inverses and von Neumann algebra}\label{s1}

We follow the description of the local inverses of $\phi$ in \cite{DPW12, DSZ11}. Suppose $\phi$ is a Blaschke product of order $n$. If
$$
E = \phi^{-1}(\phi(\{z \in D: \phi'(z) = 0\})),
$$
then $E$ is a finite set in $\D$. Such an $E$ is called the set of branched points of $\phi$. For an open set $V$ in $\D$, a local inverse of $\phi$ in $V$ is a function $f$ analytic in $V$ which satisfies $f(V) \subseteq \D$ and $\phi(f(z)) = \phi(z)$ for every $z$ in $V$. Note that for $z \in \D \setminus E$, $\phi$ is one to one in some neighborhood $V_{z_i}$ for each $z_i$ in
$$
\phi^{-1}\circ \phi(z) = \{z_0, \cdots, z_{n-1}\}.
$$
Therefore there are $n$ local inverses $\rho_0, \rho_1, \cdots, \rho_{n-1}$ for $\phi$ in $\D \setminus E$, i.e.
$$\phi^{-1}\circ \phi = \{\rho_0(z), \rho_1(z), \cdots, \rho_{n-1}(z)\}.
$$
Then each $\rho_i(z)$ is locally analytic and arbitrarily continuable in $\D \setminus E$. Suppose $\zeta_0$ is a fixed point on the boundary of $\D$, let $\Gamma$ be a curve in $\D$ passing through the points in $E$ and $\zeta_0$ so that $\D \backslash \Gamma$ is a simply connected region. As noted in \cite{DSZ11}, $n$ local inverses $\rho_0, \rho_1, \cdots, \rho_{n-1}$ for $\phi$ are well defined on $\D \backslash \Gamma$. We define an equivalence relation on the local inverses. We say that $\rho_i \thicksim \rho_j$ if there is a loop $\gamma$ in $\D \backslash E$ such that $\rho_i$ and $\rho_j$ are analytic continuation of each other along $\gamma$. Then $\thicksim$ is an equivalence relation. Using this equivalence relation, we obtain a partition
$$
\{G_1, G_2, \cdots, G_q\}\quad\hbox{for}\quad 1 < q \leq n\ \ \&\ \ \{\rho_0, \rho_1, \cdots, \rho_{n-1}\}.
$$
It is shown in \cite{DPW12} that $M_\phi$ has exact $q$ minimal reducing subspaces on $L^2_a(\D)$, where $q$ is the number of connected components of the Riemann surface $S_\phi$ of $\phi^{-1}\circ \phi$.

For $\HH = B^2$ or $D^2$ let
$$
\{M_\phi\}' = \{S \in \mathsf{B}(\HH):\ S M_\phi = M_\phi S\}
$$
be the commutant of $M_\phi$. Note that the reducing subspaces of $M_\phi$ are in one to one correspondence with the projections in $\{M_\phi\}'$, the problem of classifying the reducing subspaces of $M_\phi$ is equivalent to studying the projections in $\{M_\phi\}'$. Let
$$
\HA_\phi = \{M_\phi, M_\phi^*\}' \subset \mathsf{B}(B^2).
$$
Then \cite[Theorem 1.1]{DPW12} says that $\HA_\phi$ is a commutative von Neumann algebra of dimension $q$. In fact, suppose $\{G_1, G_2, \cdots, G_q\}$ is the partition for $\{\rho_0, \rho_1, \cdots, \rho_{n-1}\}$ and define
$$
\xi_i f (z)= \sum\limits_{\rho \in G_i} f(\rho(z)) \rho'(z)
\quad\forall\quad (z,f)\in (\D \setminus E)\times B^2.
$$
The result in \cite{DSZ11} asserts that $\xi_1, \cdots, \xi_q$ are bounded operators on $B^2$ which are linearly independent, and the von Neumann algebra $\HA_\phi$ is generated by $\xi_1, \cdots, \xi_q$.

If
$$
\widetilde{\HA_\phi} = \{M_\phi, M_\phi^*\}' \subset\mathsf{B}(D^2),
$$
then it is shown in \cite{Luo} that $\widetilde{\HA_\phi}$ is a commutative von Neumann algebra. To determine the dimension of $\widetilde{\HA_\phi}$, we need two lemmas.

\begin{lemma}\label{oivaibdc}
Let $(T,f)\in \widetilde{\HA_\phi}\times D^2$. Then there are $a_1, \cdots, a_q \in \C$ such that
$$
\begin{cases}
Tf (z)= \sum_{i=1}^q a_i \frac{F_i(z) - F_i(0)}{z};\\
T^* f (z) = \sum_{i=1}^q \overline{a_i} \frac{H_i(z) - H_i(0)}{z},
\end{cases}
$$
where
$$
\begin{cases}
F_i(z) = \sum\limits_{\rho \in G_i} f(\rho(z)) \rho(z);\\
H_i(z) = \sum\limits_{\rho \in G_i^{-1}} f(\rho(z)) \rho(z);\\
G_i^{-1} = \{\rho: \rho^{-1} \in G_i\}.
\end{cases}
$$
\end{lemma}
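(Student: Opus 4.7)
The strategy is to push the problem over to the Bergman space via the unitary $U: D^2 \to B^2$, $Uf=(zf)'$, and then apply the description of $\HA_\phi$ recalled above, in which $\HA_\phi$ is the linear span of the operators $\xi_i f(z) = \sum_{\rho\in G_i} f(\rho(z))\rho'(z)$. The first step is to establish $U\widetilde{\HA_\phi} U^* \subseteq \HA_\phi$. By Luo's result, $U$ sends reducing subspaces of $M_\phi$ on $D^2$ to reducing subspaces of $M_\phi$ on $B^2$; applied to the range of each projection in $\widetilde{\HA_\phi}$, this shows that conjugation by $U$ carries projections of $\widetilde{\HA_\phi}$ into projections of $\HA_\phi$. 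Since $\widetilde{\HA_\phi}$ is a commutative von Neumann algebra, it is the weak-operator closure of the linear span of its projections, and the inclusion follows. This is the main technical obstacle, though not a deep one.

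Given $T\in\widetilde{\HA_\phi}$, one can therefore write $UTU^* = \sum_{i=1}^q a_i \xi_i$ for some $a_i\in\C$. Inverting $(zf)'=h$ gives the explicit formula $(U^*h)(z) = z^{-1}\int_0^z h(w)\,dw$, and hence
$$
Tf(z) = \sum_{i=1}^q a_i\,\frac{1}{z}\int_0^z \sum_{\rho\in G_i}(Uf)(\rho(w))\rho'(w)\,dw.
$$
The computational heart of the argument is the chain-rule identity
$$
(Uf)(\rho(w))\rho'(w) = \bigl[f(\rho(w))+\rho(w)f'(\rho(w))\bigr]\rho'(w) = \frac{d}{dw}\bigl[\rho(w)f(\rho(w))\bigr],
$$
valid for any local inverse $\rho$. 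Summing over $\rho\in G_i$ and integrating from $0$ to $z$ turns the inner integral into $F_i(z)-F_i(0)$, which yields the stated formula for $Tf$.

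The formula for $T^*$ follows by the same route. Since $\widetilde{\HA_\phi}$ is self-adjoint, $UT^*U^* = (UTU^*)^* = \sum_i \overline{a_i}\,\xi_i^*$. A standard adjoint computation for such transfer operators on $B^2$ gives $\xi_i^* g(z) = \sum_{\rho\in G_i^{-1}} g(\rho(z))\rho'(z)$ with $G_i^{-1}=\{\rho:\rho^{-1}\in G_i\}$; the key observation is that passing from a local inverse to its inverse branch preserves the analytic-continuation equivalence class, so the sets $G_i^{-1}$ are again partition blocks. Repeating the integration argument with $G_i^{-1}$ in place of $G_i$ produces $H_i(z)-H_i(0)$ in the numerator, which completes the proof.
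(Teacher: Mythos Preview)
Your proposal is correct and follows essentially the same route as the paper: transfer to $B^2$ via the unitary $U$, establish $U\widetilde{\HA_\phi}U^*\subseteq\HA_\phi$ using Luo's result on reducing subspaces, expand $UTU^*$ in the basis $\xi_1,\ldots,\xi_q$, and recover $Tf$ from the chain-rule identity $(Uf)(\rho(w))\rho'(w)=\frac{d}{dw}[\rho(w)f(\rho(w))]$. The only cosmetic difference is that the paper invokes finite-dimensionality of $\widetilde{\HA_\phi}$ so that \emph{minimal} projections already span it linearly, whereas you appeal to weak-operator density of projections in a commutative von Neumann algebra; both arguments reach the same inclusion, and the computational part and the treatment of $T^*$ via $\xi_i^*$ (citing the $G_i^{-1}$ formula from \cite{DSZ11}) are identical.
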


\begin{proof} First of all, we mention that for $f \in D^2$,
$$
F_i(z) = \sum_{\rho \in G_i} f(\rho(z)) \rho(z)
$$
is defined on $\D \setminus E$, where $E$ is the set of branched points of $\phi$. Since the continuation of any path in $\D \setminus E$ leads to a permutation in $\{\rho: \rho \in G_i\}$, $F_i(z)$ is unchanged under such a permutation and so is an analytic function well-defined on $\D \setminus E$ and analytically extends to the unit disk.

Next, let $U: D^2\rightarrow B^2$ be defined by
$$
Uf(z) = (zf)'(z)=\big(zf(z)\big)'.
$$
Then $U$ is a unitary operator. Let $P \in \widetilde{\HA_\phi}$ be a minimal projection, by Theorem 2.6 \cite{Luo}, $UPU^* \in A_\phi$. Note that $\widetilde{\HA_\phi}$ is a finite dimensional von Neumann algebra, all minimal projections span $\widetilde{\HA_\phi}$, therefore $UTU^* \in A_\phi$. Since $\HA_\phi = \text{span}\{\xi_1, \cdots, \xi_q\}$ (\cite{DSZ11}), there are $a_1, \cdots, a_q \in \C$ such that
$$
\begin{cases}
UTU^* = \sum_{i=1}^q a_i \xi_i;\\
UT^*U^* = \sum_{i=1}^q \overline{a_i} \xi_i^*.
\end{cases}
$$
This gives
$$
T = \sum_{i=1}^q a_i U^*\xi_iU,
$$
and consequently,
\begin{align*}
Tf & = \sum_{i=1}^q a_i U^*\xi_iU f\\
& = \sum_{i=1}^q a_i U^*\xi_i (zf)'\\
& = \sum_{i=1}^q a_i U^* \sum_{\rho \in G_i} (zf)'(\rho(z)) \rho'(z)\\
& = \sum_{i=1}^q a_i \frac{F_i(z) - F_i(0)}{z}.
\end{align*}
Note that (\cite{DSZ11})
$$
\xi_i^* f(z) = \sum\limits_{\rho \in G_i^{-1}} f(\rho(z)) \rho'(z).
$$
So we similarly have
$$
T^* f (z) = \sum_{i=1}^q \overline{a_i} \frac{H_i(z) - H_i(0)}{z}.
$$
\end{proof}

\begin{lemma}\label{oivgicbds}
Let $(T,f)\in\widetilde{\HA_\phi}\times D^2$. Suppose
$$
\begin{cases}
Tf (z)= \sum_{i=1}^q a_i \frac{F_i(z) - F_i(0)}{z};\\
T^* f (z) = \sum_{i=1}^q \overline{a_i} \frac{H_i(z) - H_i(0)}{z},
\end{cases}
$$
where
$$
\begin{cases}
a_i \in \C;\\
F_i(z) = \sum\limits_{\rho \in G_i} f(\rho(z)) \rho(z);\\
H_i(z) = \sum\limits_{\rho \in G_i^{-1}} f(\rho(z)) \rho(z).
\end{cases}
$$
Then
$$
\sum_{i=1}^q a_i F_i(0) = 0=\sum_{i=1}^q \overline{a_i} H_i(0).
$$
\end{lemma}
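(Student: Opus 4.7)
The plan is to derive $\sum_{i=1}^q a_i F_i(0) = 0$ from the commutation relation $T M_\phi = M_\phi T$, and then apply the same argument to $T^*$. The crucial input is the identity $\phi \circ \rho = \phi$ for every local inverse $\rho$, which transfers the factor $\phi$ cleanly through the symmetric sum defining $F_i$.

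Applying the formula from Lemma \ref{oivaibdc} with $\phi f$ in place of $f$ (the coefficients $a_i$ are unchanged, since by the proof of Lemma \ref{oivaibdc} they come from the representation $T = \sum_i a_i U^* \xi_i U$, which depends only on $T$), the associated symmetric sum becomes
\[
\sum_{\rho \in G_i} (\phi f)(\rho(z))\,\rho(z) = \phi(z) F_i(z),
\]
so
\[
T(\phi f)(z) = \sum_{i=1}^q a_i \frac{\phi(z) F_i(z) - \phi(0) F_i(0)}{z}.
\]
On the other hand,
\[
\phi(z)\,Tf(z) = \sum_{i=1}^q a_i \frac{\phi(z) F_i(z) - \phi(z) F_i(0)}{z}.
\]
Subtracting and invoking $T M_\phi = M_\phi T$ yields
\[
\Big(\sum_{i=1}^q a_i F_i(0)\Big) \cdot \frac{\phi(z) - \phi(0)}{z} = 0.
\]
Since $\phi$ is a non-constant Blaschke product, $(\phi(z) - \phi(0))/z$ is not identically zero on $\D$, and the first identity follows.

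For the second identity I would repeat the argument with $T^*$ in place of $T$: being an element of $\widetilde{\HA_\phi}$ (which is closed under adjoints), $T^*$ also commutes with $M_\phi$, and $G_i^{-1}$ is again a set of local inverses of $\phi$, so the identity $\phi \circ \rho = \phi$ still applies and the same manipulation produces $\sum_{i=1}^q \overline{a_i} H_i(0) = 0$. The only subtle point is that $0$ might be a branch point of $\phi$, but Lemma \ref{oivaibdc} already observes that $F_i$ and $H_i$ extend analytically to the whole disk, so $F_i(0)$ and $H_i(0)$ are unambiguously defined and the formal manipulation above is justified. I do not anticipate any serious obstacle beyond this bookkeeping.
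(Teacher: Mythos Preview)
Your argument is correct and is essentially identical to the paper's proof: both compute $T(\phi f)$ via the representation from Lemma~\ref{oivaibdc} using $\phi\circ\rho=\phi$, compare with $\phi\cdot Tf$, and deduce $\big(\sum_i a_i F_i(0)\big)\frac{\phi(z)-\phi(0)}{z}=0$, then repeat for $T^*$. Your additional remarks (that the $a_i$ depend only on $T$, and that $F_i$, $H_i$ extend analytically so the values at $0$ make sense) are correct and make the write-up slightly more explicit than the paper's.
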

\begin{proof}
Since $T \in \widetilde{\HA_\phi}$, $TM_\phi f = M_\phi Tf$. Note that $\phi(\rho(z)) = \phi(z)$. So we have
$$
TM_\phi f(z) = \sum_{i=1}^q a_i \frac{\phi(z) F_i(z) - \phi(0)F_i(0)}{z}.
$$
It then follows from
$$
\begin{cases}
M_\phi Tf (z) = \sum_{i=1}^q a_i \phi(z) \frac{F_i(z) - F_i(0)}{z};\\
TM_\phi f = M_\phi Tf,
\end{cases}
$$
that
$$
\sum_{i=1}^q a_i F_i(0) \frac{\phi(z) - \phi(0)}{z} = 0.
$$
Thus
$$\sum_{i=1}^q a_i F_i(0) = 0.$$
Similarly,
$$
\sum_{i=1}^q \overline{a_i} H_i(0) = 0.
$$
\end{proof}

Let
$$
\HL = \text{span}\left\{(a_1, \cdots, a_q):\quad f \in D^2,\ \ \sum_{i=1}^q a_i F_i(0) = 0,\ \ \sum_{i=1}^q \overline{a_i} H_i(0) = 0\right\},
$$
where $F_i$ and $G_i$ are defined as above. Using Lemma \ref{oivaibdc}, we recover that $\widetilde{\HA_\phi}$ is a commutative von Neumann algebra as shown below.

\begin{prop}\label{vaiddatlms}
$\widetilde{\HA_\phi}$ is a commutative von Neumann algebra, and $\dim \widetilde{\HA_\phi} = \dim \HL$.
\end{prop}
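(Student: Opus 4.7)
The plan is to construct an explicit linear bijection $\Phi:\widetilde{\HA_\phi}\to\HL$, so that the dimension equality reduces to linear algebra. By Lemma \ref{oivaibdc}, each $T\in\widetilde{\HA_\phi}$ is paired with a unique tuple $(a_1,\dots,a_q)\in\C^q$ arising from the expansion $UTU^*=\sum_{i=1}^q a_i\xi_i$ inside $\HA_\phi$; the uniqueness is guaranteed by the linear independence of $\{\xi_1,\dots,\xi_q\}$ in $\HA_\phi$, as established in \cite{DSZ11}. Setting $\Phi(T)=(a_1,\dots,a_q)$ produces a $\C$-linear map, and Lemma \ref{oivgicbds} shows $\Phi(T)\in\HL$ for every $T$. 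Commutativity of $\widetilde{\HA_\phi}$ falls out along the way: $\Phi$ factors through the injective algebra homomorphism $T\mapsto UTU^*$ into the commutative algebra $\HA_\phi$ of \cite{DPW12}, so $\widetilde{\HA_\phi}$ is forced to be commutative.

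Injectivity of $\Phi$ can be read off the explicit formula for $Tf$ in Lemma \ref{oivaibdc}: if all $a_i=0$ then $Tf=0$ for every $f\in D^2$, forcing $T=0$. For surjectivity, I would take an arbitrary $(a_1,\dots,a_q)\in\HL$ and define the candidate operator
$$ Tf(z)=\sum_{i=1}^q a_i\,\frac{F_i(z)-F_i(0)}{z}, \qquad F_i(z)=\sum_{\rho\in G_i}f(\rho(z))\rho(z), $$
on $D^2$. Boundedness is free via the identification $T=U^{-1}\bigl(\sum_i a_i\xi_i\bigr)U$, which follows from the chain-rule identity $\xi_i(Uf)=F_i'$ already present inside the proof of Lemma \ref{oivaibdc}, together with the boundedness of each $\xi_i$ on $B^2$ from \cite{DSZ11}. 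Invoking $\phi\circ\rho=\phi$, a direct calculation yields
$$ (TM_\phi-M_\phi T)f(z)=\sum_{i=1}^q a_iF_i(0)\,\frac{\phi(z)-\phi(0)}{z}, $$
which vanishes identically in $z$ because $(a_i)\in\HL$ forces $\sum_i a_iF_i(0)=0$ for every $f\in D^2$. Since $\xi_i^*g(z)=\sum_{\rho\in G_i^{-1}}g(\rho(z))\rho'(z)$, as recalled in the proof of Lemma \ref{oivaibdc}, the same route delivers $T^*f(z)=\sum_i\overline{a_i}(H_i(z)-H_i(0))/z$, and the parallel computation with $F_i$ replaced by $H_i$ and $a_i$ by $\overline{a_i}$ yields $T^*M_\phi=M_\phi T^*$, equivalently $TM_\phi^*=M_\phi^*T$; the second defining relation $\sum_i\overline{a_i}H_i(0)=0$ of $\HL$ is exactly what kills the remainder term. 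Hence $T\in\widetilde{\HA_\phi}$ with $\Phi(T)=(a_1,\dots,a_q)$.

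Putting the two halves together supplies the linear isomorphism $\widetilde{\HA_\phi}\cong\HL$ and hence $\dim\widetilde{\HA_\phi}=\dim\HL$. The main obstacle, though of an entirely computational flavour, is pinning down the adjoint $T^*$ and verifying both commutation identities rather than merely the one for $M_\phi$; this is what explains, and forces, the presence of two separate linear constraints in the definition of $\HL$---one for the partition $\{G_i\}$ and one for its inverse partition $\{G_i^{-1}\}$.
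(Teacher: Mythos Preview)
Your argument is correct and follows the same route as the paper: both embed $\widetilde{\HA_\phi}$ into the commutative algebra $\HA_\phi$ via $T\mapsto UTU^*=\sum_i a_i\xi_i$, deduce commutativity from that of $\HA_\phi$, and use the linear independence of the $\xi_i$ together with Lemmas~\ref{oivaibdc} and~\ref{oivgicbds} to identify $\widetilde{\HA_\phi}$ with $\HL$. The paper's proof is much terser---it simply asserts that ``the rest follows from Lemmas~\ref{oivaibdc} and~\ref{oivgicbds}''---whereas you spell out the surjectivity direction explicitly by reversing the computation of Lemma~\ref{oivgicbds}; this is a genuine improvement in completeness, since the lemmas as stated only give the inclusion $\Phi(\widetilde{\HA_\phi})\subseteq\HL$, and one does need to check that every tuple in $\HL$ comes from an operator commuting with both $M_\phi$ and $M_\phi^*$.
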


\begin{proof}
Let $T \in \widetilde{\HA_\phi}$, then by the argument in Lemma \ref{oivaibdc}, there are $a_1, \cdots, a_q \in \C$ such that $T = \sum_{i=1}^q a_i U^*\xi_iU$. Since $\xi_i$ and $\xi_j$ commute for any $i ,j$, $\widetilde{\HA_\phi}$ is a commutative von Neumann algebra. Note that $\xi_1, \cdots, \xi_q$ are linearly independent, the rest follows from Lemmas \ref{oivaibdc} and \ref{oivgicbds}.
\end{proof}

The next result is presented in \cite{Luo}, but it can be easily obtained from Lemmas \ref{oivaibdc} and \ref{oivgicbds}.
\begin{corollary}\label{irddpan}
Let $\phi = \varphi_\alpha^n, \alpha \neq 0$. Then $M_\phi$ is irreducible on $D^2$.
\end{corollary}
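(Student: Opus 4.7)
The plan is to apply Proposition \ref{vaiddatlms} by identifying the local inverses of $\phi = \varphi_\alpha^n$ explicitly and then showing that the linear system defining $\HL$ forces $\HL$ to be one-dimensional.

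First I would solve the equation $\phi(w) = \phi(z)$. This reduces to $\varphi_\alpha(w) = \omega^k \varphi_\alpha(z)$ with $\omega = e^{2\pi i/n}$, and since $\varphi_\alpha$ is its own Möbius inverse, the local inverses are exactly
\[
\rho_k(z) = \varphi_\alpha\bigl(\omega^k \varphi_\alpha(z)\bigr), \qquad k = 0, 1, \ldots, n-1.
\]
Each $\rho_k$ is a composition of two Möbius self-maps of $\D$, hence globally analytic on $\D$. Consequently the analytic continuation of $\rho_k$ along any loop in $\D \setminus E$ returns $\rho_k$ itself and never identifies two distinct local inverses; so every equivalence class $G_i$ is a singleton, $q = n$, and $F_i(z) = f(\rho_{i-1}(z))\, \rho_{i-1}(z)$ for $i = 1, \ldots, n$.

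Next I would evaluate at $z = 0$. Since $\rho_0$ is the identity, $\rho_0(0) = 0$ and hence $F_1(0) = 0$. For $k \geq 1$,
\[
\rho_k(0) = \varphi_\alpha(\omega^k \alpha) = \frac{\alpha(1 - \omega^k)}{1 - |\alpha|^2 \omega^k},
\]
which is nonzero (because $\alpha \neq 0$ and $\omega^k \neq 1$) and the values are pairwise distinct by injectivity of $\varphi_\alpha$ on $\D$. By Proposition \ref{vaiddatlms}, an element $(a_1, \ldots, a_n) \in \HL$ must satisfy, for every $f \in D^2$,
\[
\sum_{i=2}^n a_i\, \rho_{i-1}(0)\, f(\rho_{i-1}(0)) = 0,
\]
together with an analogous identity coming from the $H_i(0)$'s.

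Finally, since $D^2$ contains all polynomials, Lagrange interpolation produces $f \in D^2$ taking arbitrary prescribed values at the $n-1$ distinct nonzero points $\rho_1(0), \ldots, \rho_{n-1}(0)$; the displayed identity therefore forces $a_i \rho_{i-1}(0) = 0$, and hence $a_i = 0$ for every $i \geq 2$. Combining this upper bound with the obvious membership $(1, 0, \ldots, 0) \in \HL$ (corresponding to the identity operator, which lies in $\widetilde{\HA_\phi}$), one gets $\dim \HL = 1$, so $\widetilde{\HA_\phi} = \C \cdot I$ and $M_\phi$ is irreducible on $D^2$. The only step that requires any care is confirming that the partition $\{G_1, \ldots, G_q\}$ is the discrete partition, but this is immediate from the explicit Möbius formula for each $\rho_k$ — everything else is a short calculation with $\rho_k(0)$ and linear independence of point evaluations.
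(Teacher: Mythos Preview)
Your proof is correct and follows essentially the same route as the paper: identify the local inverses as $\rho_k = \varphi_\alpha(\omega^k\varphi_\alpha(\cdot))$, observe that each is a global M\"obius map so the partition is discrete, and conclude $\dim\HL = 1$ via Proposition~\ref{vaiddatlms}. The paper's proof simply asserts ``It is then clear that $\dim\widetilde{\HA_\phi} = \dim\HL = 1$'' at the last step, whereas you supply the verification explicitly (computing the distinct nonzero values $\rho_k(0)$ and invoking interpolation); so your argument is a fleshed-out version of the same proof rather than a different one.
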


\begin{proof}
Let $\omega = e^{\frac{2\pi i}{n}}$ be a primitive $n$-th root of unity, then
$$
\rho_j(z) = \varphi_\alpha(\omega^j \varphi_\alpha(z)),\ \ j = 0, 1, \cdots, n - 1,
$$
are $n$ local inverses for $\phi$. It follows that the partition for $\{\rho_0, \rho_1, \cdots, \rho_{n-1}\}$ is
$$\{\{\rho_0\}, \{\rho_1\}, \cdots, \{\rho_{n-1}\}\}.
$$
It is then clear that $\dim \widetilde{\HA_\phi} = \dim \HL = 1$, thus $M_\phi$ is irreducible on $D^2$.
\end{proof}

\section{Partitions and Riemann surfaces}\label{s2}

\subsection{Partitions for local inverses}\label{s21}

If $\rho_0, \rho_1, \cdots, \rho_{n-1}$ are $n$ local inverses for $\phi$, then there is an intrisinc order for these local inverses (\cite{DPW12}). In fact, let $A_s = \{z \in \C: s < |z| < 1\}$ for $0 < s < 1$, and let $\omega = e^{\frac{2\pi i}{n}}$ be a primitive $n$-th root of unity. By \cite[Lemma 2.1]{DPW12}, there exists an analytic function $u$ on a neighborhood of $\overline{\D} \setminus \gamma$ such that $\phi = u^n$, where $\gamma$ is an arc in $\D$ which contains the zero set of $\phi$. Moreover, there exists $s \in (0,1)$ such that $u : u^{-1}(\overline{A_s}) \rightarrow \overline{A_s}$ is invertible. By \cite[Lemma 2.2]{DPW12}, there exists a family of local inverses for $\phi$ on $\Omega = u^{-1}(A_s)$. We then label the local inverses $\{\rho_0, \rho_1, \cdots, \rho_{n-1}\}$ such that $\rho_i(z) = u^{-1}(\omega^iu(z))$ on $\Omega$ for $0 \leq i \leq n-1$. Then each $\rho_i$ is invertible on $\Omega$ and for any $\rho_i, \rho_j, z \in \Omega$,
$$
\rho_i \circ \rho_j(z) = \rho_{i+j~ \text{mod} ~n}(z) = \rho_j \circ \rho_i(z),
$$
i.e. $\{\rho_0, \rho_1, \cdots, \rho_{n-1}\}$ has a group-like property under composition on $\Omega$.

We remark that by the invertibility of $u : u^{-1}(\overline{A_s}) \rightarrow \overline{A_s}$, we can also get that the operators $\xi_i: B^2 \rightarrow B^2$ defined by
$$\xi_i f(z) = \sum_{\rho \in G_i} f(\rho(z)) \rho'(z)
$$ are bounded. In fact, there exists a constant $C$ such that for $f \in B^2$,
$$\int_{\D} |f^2(z)| dA(z) \leq C \int_\Omega |f^2(z)| dA(z),$$
thus
\begin{align*}
\int_{\D} |\xi_if|^2 dA &\leq C \int_\Omega |\xi_if|^2 dA\\
& \leq \widetilde{C}\sum_{\rho \in G_i} \int_\Omega |f(\rho(z)) \rho'(z)|^2 dA(z)\\
& = \widetilde{C} \# G_i \int_\Omega |f|^2 dA\\
& \leq \widetilde{C}\# G_i \int_{\D} |f|^2 dA,
\end{align*}
where $\# G_i$ denotes the number of elements in $G_i$.

For a computational purpose, we write $j \in G_k$ if $\rho_j \in G_k$, so
$$
G_k = \{j_1, \cdots, j_l\}\quad\hbox{as}\quad G_k = \{\rho_{j_1}, \cdots, \rho_{j_l}\}.
$$
Since $\{\rho_0, \rho_1, \cdots, \rho_{n-1}\}$ has a group-like property, $\{G_1, G_2, \cdots, G_q\}$ is a partition of the additive group $\Z_n = \{0, 1, \cdots, n-1\}$. We now define the dual partition. For integers $j_1, j_2 \in \{0, 1, \cdots, n\}$, we write $j_1 \thicksim j_2$, if $\sum_{\rho_i \in G_k} \omega^{ij_1} = \sum_{\rho_i \in G_k} \omega^{ij_2}$. This equivalence relation then partition $\{0, 1, \cdots, n\}$ into equivalent classes $\{G_1', \cdots, G_l'\}$. We call this partition the dual partition. By the discussion in \cite{DPW12}, we have the following necessary conditions for the partitions $\{G_1, G_2, \cdots, G_q\}$.
\begin{itemize}
\item[{\rm ($A_1$)}] One of $\{G_k\}$ is $\{0\}$ since $\rho_0(z) = z$.

\item[{\rm ($A_2$)}] By \cite[Lemma 7.4]{DSZ11}, for each $G_j = \{j_1, \cdots, j_m\}$, there exists $k$ such that
$$G_k = G_j^{-1} = \{n - j_1, \cdots, n - j_m\}.$$

\item[{\rm ($A_3$)}] By \cite[Theorem 7.6]{DSZ11}, for any $G_j, G_k$, there are $G_{l_1}, \cdots, G_{l_m}$ such that
\begin{align*}
G_j + G_k = G_{l_1} \cup \cdots \cup G_{l_m} \quad ~\text{counting multiplicities on both sides}.
\end{align*}

\item[{\rm ($A_4$)}] By \cite[Corollary 3.10]{DPW12}, the dual partition also has $q$ elements, i.e. $l = q$.
\end{itemize}
We will see that these necessary conditions are not sufficient, and we are led to study Riemann surface.

Now we use the above conditions to obtain the possible partitions $\{G_1, G_2, \cdots, G_q\}$ when the order of $\phi=u^n$ is $5$. Accordingly, if $n=5$, then \cite[Corollary 8.4]{DSZ11} implies $q\neq 4$, and hence we have the following cases:

(i)  If $q = 5$, then the partition is $\{G_1, G_2, \cdots, G_q\}=\{\{0\}, \{1\}, \{2\}, \{3\}, \{4\}\}$.

(ii) If $q = 3$, without loss of generality, suppose $G_1 = \{0\}$. Let $m = \min\{\#G_2, \# G_3\}$. By condition ($A_2$), $m$ can not be $1$. Thus $m = 2$, then $\#G_2 = \#G_3 =2$, and there are essentially three cases.

(a) $G_2 = \{1, 2\}, G_3 = \{3, 4\}$;

(b) $G_2 = \{1, 3\}, G_3 = \{2, 4\}$;

(c) $G_2 = \{1, 4\}, G_3 = \{2, 3\}$.

\medskip

Case (a) doesn't satisfy condition ($A_3$), since $G_2 + G_2 = \{2, 3, 3, 4\}$. Similarly, case (b) doesn't satisfy condition ($A_3$). So we have the possible partition $(c)$.

(iii) If $q = 2$, then the partition is $\{\{0\}, \{1, 2, 3, 4\}\}$.

Therefore when $n = 5$, the possible partitions are
$$
\begin{cases}
\{\{0\}, \{1\}, \{2\}, \{3\}, \{4\}\}$, $\{\{0\}, \{1, 4\}, \{2, 3\}\};\\ \{\{0\}, \{1, 2, 3, 4\}\}.
\end{cases}
$$
We note here that if $q = n$, then each $\rho_i$ is equivalent to itself, thus each $\rho_i$ extends analytic to the unit disc and has modulus 1 on the unit circle, so each $\rho_i$ is a M\"{o}bius transform. Hence $\phi$ is equivalent to $\varphi_\alpha^n, \alpha \in \D$. We are about to show that for most Blaschke products with $5$ zeros, the partition is $\{\{0\}, \{1, 2, 3, 4\}\}$.

\subsection{Riemann surfaces}\label{s22}

Let $\phi ={P}/{Q}$ be a finite Blaschke product of order $n$, where $P$ and $Q$ are two polynomials of degree less than or equal to $n$. Let
$$f(w, z) = P(w)Q(z) - P(z) Q(w).$$
Then $f(w,z)$ is a polynomial of $w$ with degree $n$, and the coefficients are polynomials of $z$ with degree less than or equal to $n$. In the ring $\C[z,w]$, we factor
$$f(w,z) = \prod_{i=1}^q p_i(w,z)^{n_i},$$
where $p_i(w,z)$ are irreducible polynomials. Note that on $\D^2$, $\phi(w) - \phi(z) = 0$ if and only if $f(w,z) = 0$. Since Bochner's theorem (\cite{Wa18}) says that $\phi$ has exact $n - 1$ critical points in $\D$, it follows that (see \cite{DSZ11})
$$f(w,z) = \prod_{i=1}^q p_i(w,z).$$
Theorem 3.1 \cite{DSZ11} says that the number of connected components of the Riemann surface $S_\phi$ equals the number of irreducible factors of $f(w,z)$.
\begin{theorem}[\cite{DSZ11}]\label{ccrsrsbd}
Let $\phi$ be a Blaschke product of order $n$ and $f(w,z) = \Pi_{j=1}^q p_j(w,z)$. Suppose that $p(w,z)$ is one of factors of $f(w,z)$. Then the Riemann surface $S_p$ is connected if and only if $p(w,z)$ is irreducible. Hence $q$ equals the number of connected components of the Riemann surface $S_\phi = S_f$.
\end{theorem}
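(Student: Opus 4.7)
The plan is to prove the classical correspondence between irreducible factors of a bivariate polynomial and connected components of the associated Riemann surface, then read off the count of components directly from the given factorization $f = \prod_{i=1}^q p_i$. The starting observation is that $\phi(w)-\phi(z)=0$ on $\D^2$ if and only if $f(w,z)=0$, so every local inverse $\rho_j$ of $\phi$ gives locally a branch $w=\rho_j(z)$ of the algebraic curve $\{f(w,z)=0\}$, and the Riemann surface $S_\phi$ is naturally identified with the normalization of this curve restricted to $\D^2$. Under this identification the analytic-continuation equivalence $\rho_i\thicksim\rho_j$ of Section~\ref{s1} is the same as the relation ``belonging to the same irreducible component of the curve.'' Hence the whole statement reduces to proving, for a single factor $p$ of $f$, that $S_p$ is connected if and only if $p$ is irreducible.

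For the ``only if'' direction I would suppose $p = g\cdot h$ with $g,h$ non-constant and coprime in $\C[w,z]$. Then $V(p)=V(g)\cup V(h)$ with $V(g)\cap V(h)$ a finite set by B\'ezout. Fix a base point $z_0\in\D$ avoiding this finite set, the branch set $E$ of $\phi$, and the discriminant loci of $g$ and $h$; the fibre $\{w:p(w,z_0)=0\}$ then splits cleanly into the roots of $g(\cdot,z_0)$ and those of $h(\cdot,z_0)$. Since each $p_i$ is preserved as a polynomial identity under analytic continuation of $w(z)$, the two subsets of branches are preserved separately by monodromy along any loop in $\D\setminus E$ avoiding the extra discriminant points. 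The $\thicksim$-equivalence classes of local inverses over $V(g)$ are therefore disjoint from those over $V(h)$, so $S_p$ has at least two components. For the ``if'' direction I would invoke the standard fact that an irreducible plane algebraic curve has connected normalization; concretely, one argues that if the monodromy of the covering $V(p)\to\D$ (away from branch values) were not transitive on the $\deg_w p$ sheets, then the elementary symmetric functions of the branches in a single monodromy orbit would, after clearing denominators via Gauss's lemma, assemble into a nontrivial factor of $p$ in $\C[w,z]$, contradicting irreducibility. Hence $S_p$ is connected exactly when $p$ is irreducible.

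Given both directions, $S_f = S_{p_1}\sqcup\cdots\sqcup S_{p_q}$ is a disjoint union of $q$ connected Riemann surfaces, and since $f$ defines the same zero set as $\phi(w)-\phi(z)$ we conclude $S_\phi = S_f$ has exactly $q$ connected components. I expect the principal technical obstacle to be the bookkeeping that identifies $S_\phi$ with the disk-restricted normalization of $\{f=0\}$: one must verify that resolving the singular and branch points on the algebraic curve matches the equivalence-class construction recalled in Section~\ref{s21} and that restricting from $\C^2$ to $\D^2$ does not accidentally disconnect an irreducible component (which is ensured by Bochner's theorem counting the critical points of $\phi$ inside $\D$, already used to justify the squarefree factorization $f=\prod p_i$). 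Once this bridge is in place, both implications are routine consequences of standard algebraic-curve theory.
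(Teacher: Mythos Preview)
Your sketch is a reasonable outline of the standard monodromy/normalization argument, but note that the paper does not actually prove this theorem at all: it is quoted as Theorem~3.1 of \cite{DSZ11}, and immediately after the statement the authors simply remark that ``the above theorem also follows from Theorem~8.9 \cite{Fo91},'' i.e., from the classical fact that an irreducible algebraic equation over a connected base defines a connected Riemann surface. There is no in-paper proof to compare against.

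Relative to those references, your approach is essentially the same as what Forster's Theorem~8.9 encodes: transitivity of monodromy on the sheets of an irreducible factor (your ``if'' direction via symmetric functions and Gauss's lemma) and the splitting of a reducible factor into monodromy-stable pieces (your ``only if'' direction). The one point where you go slightly beyond a mere citation is your explicit acknowledgment that restricting from $\C^2$ to $\D^2$ could in principle disconnect a component, and your appeal to Bochner's theorem to rule this out; this is exactly the issue that \cite{DSZ11} handles in its original proof, so you have correctly identified the only nontrivial wrinkle. If you were to write this out in full you would want to make that step precise (showing every sheet over $\D\setminus E$ actually lies in $\D$, which follows from $|\rho_j(z)|<1$ for local inverses of a Blaschke product), but as a proposal the plan is sound and aligned with the cited sources.
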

Note that $f(w,z) = 0$ is an algebraic equation with analytic coefficients in $z \in \D$, the above theorem also follows from Theorem 8.9 \cite{Fo91}.

For two finite Blaschke products $\phi$ and $\varphi$, we say that $\phi$ is equivalent to $\varphi$ provided that there are $|a| = 1, \alpha \in \D$ such that $\varphi_\alpha(\phi(z)) = a \varphi(z)$. For two equivalent Blaschke products $\phi$ and $\varphi$, $M_\phi$ and $M_\varphi$ are functional calculus of each other, and hence have the same reducing subspaces.

Notice that for a finite Blaschke product $\phi$, if $\phi'(\alpha) = 0$ and $\lambda = \phi(\alpha)$, then
$$
\varphi_\lambda (\phi(z)) = a \varphi_\alpha^2(z) \psi(z),
$$
where $|a| = 1, \psi$ is a finite Blaschke product of order $n - 2$, thus $\phi$ is equivalent to $\varphi_\alpha^2 \psi$. Let $U_\alpha: B^2 \rightarrow B^2$ be defined by
$$
U_\alpha g = g(\varphi_\alpha) k_\alpha,
$$
where
$$
k_\alpha(z) = \frac{1-|\alpha|^2}{(1-\overline{\alpha}z)^2}
$$
is the normalized reproducing kernel for $B^2$, then $U_\alpha$ is a unitary operator, and
$$
U_\alpha^* M_\phi U_\alpha = M_{\phi \circ \varphi_\alpha}.
$$
Therefore, if the order of $\phi$ is $5$, we only need to study the reducing subspaces of $M_\phi$ on $B^2$ when
$$
\phi = z^4 \varphi_\alpha, z^3\varphi_\alpha\varphi_\beta\quad\hbox{or}\quad z^2\varphi_\alpha\varphi_\beta\varphi_\gamma, \alpha, \beta, \gamma \in \D \setminus \{0\}.
$$
Theorem \ref{ccrsrsbd} plays an important role here. Note that for a finite Blaschke product of order $n$, if the partition for $\phi$ is $\{\{0\}, \{1, 2, \cdots, n - 1\}\}$, then $M_\phi$ has exact two minimal reducing subspaces on $B^2$: $M_0(\phi), M_0(\phi)^\perp$, where
$$M_0(\phi) = \text{span}\{\phi'\phi^j: j \geq 0\}
$$ is always a minimal reducing subspace of $M_\phi$ on $B^2$.

\begin{prop}\label{pezfpa}
Let $\phi = z^4 \varphi_\alpha, \alpha \in \D\backslash \{0\}$. Then the partition is $\{\{0\}, \{1, 2, 3, 4\}\}$.
\end{prop}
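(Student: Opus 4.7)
The plan is to apply Theorem~\ref{ccrsrsbd} to the polynomial $f(w,z)=P(w)Q(z)-P(z)Q(w)$ with $P(z)=z^4(\alpha-z)$ and $Q(z)=1-\overline{\alpha}z$, and to show that $f=(w-z)\,g(w,z)$ with $g$ irreducible in $\C[w,z]$; equivalently, $q=2$. From the enumeration in Section~2.1, the remaining possibilities to rule out are $q=5$ (the singleton partition) and $q=3$ (the partition $\{\{0\},\{1,4\},\{2,3\}\}$).

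To exclude $q=5$: as observed in the paper, $q=n$ forces $\phi$ to be equivalent to $\varphi_\gamma^n$ for some $\gamma$, and equivalence preserves the multiset of critical points of $\phi$ with multiplicities. Now $\varphi_\gamma^5$ has a unique critical point $\gamma\in\D$ of multiplicity $4$, whereas the factorization
\[
\phi'(z)=\frac{z^3\bigl[4\alpha-(5+3|\alpha|^2)z+4\overline{\alpha}z^2\bigr]}{(1-\overline{\alpha}z)^2}
\]
shows that $\phi$ has two distinct critical points in $\D$: $z=0$ of multiplicity $3$, and the unique root $\beta\in\D$ of the quadratic factor, of multiplicity $1$. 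Hence $\phi\not\sim\varphi_\gamma^5$.

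To exclude $q=3$, I would produce a $4$-cycle in the monodromy group acting on the local inverses $\{\rho_0,\ldots,\rho_4\}$ by analyzing the curve $f(w,z)=0$ near $(w,z)=(0,\alpha)$. The relevant expansions are $\phi(w)=\alpha w^4+O(w^5)$ near $w=0$, and $\phi(z)=\phi'(\alpha)(z-\alpha)+O((z-\alpha)^2)$ near $z=\alpha$, with $\phi'(\alpha)=-\alpha^4/(1-|\alpha|^2)\neq 0$ (a direct substitution, using that $\alpha\neq\beta$ since $\alpha\neq 0$ and $|\alpha|<1$). The equation $\phi(w)=\phi(z)$ therefore forces the four non-diagonal sheets approaching $w=0$ to satisfy $w\sim i^k C(z-\alpha)^{1/4}$ for $k=0,1,2,3$ with $C=(\phi'(\alpha)/\alpha)^{1/4}$. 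As $z$ traces a small loop around $\alpha$, the factor $(z-\alpha)^{1/4}$ is multiplied by $i$, cycling the four sheets. This produces a $4$-cycle in the monodromy group acting on $\{\rho_1,\rho_2,\rho_3,\rho_4\}$, forcing these four inverses to lie in a single equivalence class, and thereby ruling out $\{\{0\},\{1,4\},\{2,3\}\}$.

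The main subtlety will be to explain why $z=\alpha$ generates nontrivial monodromy while $z=0$, which also lies in $\phi^{-1}(0)$, does not: near $z=0$ one has $\phi(z)\sim\alpha z^4$ as well, and the four non-diagonal sheets take the form $w\sim i^k z$, which are smooth single-valued functions of $z$, so any loop around $0$ induces only the identity permutation. By contrast, near $z=\alpha$ the fractional power $(z-\alpha)^{1/4}$ is genuinely multivalued because $z=\alpha$ is a simple (unramified) point of $\phi$ while $w=0$ is a critical point of order four, and this asymmetry is exactly what produces the $4$-cycle.
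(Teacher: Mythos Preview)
Your argument is correct but takes a genuinely different route from the paper's proof.

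The paper argues by contradiction: assuming the partition is $\{\{0\},\{1,4\},\{2,3\}\}$, it forms the analytic functions $f=\rho_1+\rho_4$ and $g=f\circ\rho_1+f\circ\rho_4$, uses the group-like structure on the annulus $\Omega$ to identify $g=\rho_2+\rho_3+2z$, and then evaluates at $0$ (using $\phi^{-1}\circ\phi(0)=\{0,0,0,0,\alpha\}$) to obtain $0=\alpha$, a contradiction. This symmetric-function technique is the workhorse for all the later propositions (for $z^3\varphi_\alpha\varphi_\beta$, $z^2\varphi_\alpha\varphi_\beta\varphi_\gamma$, etc.).

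You instead compute the monodromy directly via a Puiseux expansion at $z=\alpha$: since $\phi(w)=\alpha w^4+O(w^5)$ near $w=0$ while $\phi(z)=\phi'(\alpha)(z-\alpha)+O((z-\alpha)^2)$ with $\phi'(\alpha)\neq 0$, the Newton polygon of $\phi(w)-\phi(z)$ at $(0,\alpha)$ is the single segment from $(4,0)$ to $(0,1)$, forcing a $4$-cycle on the four non-identity sheets. This is a clean geometric argument, and in fact it makes your separate exclusion of $q=5$ redundant: once you have a $4$-cycle on $\{\rho_1,\rho_2,\rho_3,\rho_4\}$, the partition is forced to be $\{\{0\},\{1,2,3,4\}\}$ immediately. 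Your approach also identifies \emph{why} the partition collapses (the order-$4$ critical point of $\phi$ at $0$), whereas the paper's method, while uniform across all the order-$5$ cases, hides this mechanism.

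Two small points. First, your aside about $z=0$ is slightly misstated: only three of the non-diagonal sheets are near $w=0$ (with $w\sim i^kz$, $k=1,2,3$); the fourth non-diagonal sheet lands near $w=\alpha$ and satisfies $w-\alpha\sim cz^4$. The conclusion (trivial monodromy around $z=0$) is unaffected. Second, for full rigor you should either invoke the Newton--Puiseux criterion (single edge with $\gcd(4,1)=1$ gives an irreducible local factor) or note that $\phi(w)-\phi(z)=w^4h(w)-(z-\alpha)k(z)$ with $h(0),k(\alpha)\neq0$ lets one solve $z-\alpha$ as an analytic function of $w$ with a simple zero of order $4$, so the inverse has the claimed $(z-\alpha)^{1/4}$ behavior.
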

\begin{proof}
We prove it by contradiction. If the partition is not $\{\{0\}, \{1, 2, 3, 4\}\}$, then the partition is $\{\{0\}, \{1, 4\}, \{2, 3\}\}$. Note that $\phi^{-1} \circ \phi(0) = \{0, 0, 0, 0, \alpha\}$. So, without loss of generality we may assume
$$
\{\rho_1(0), \rho_4(0)\} = \{0, 0\}\quad\&\quad \{\rho_2(0), \rho_3(0)\} = \{0, \alpha\}.
$$
By Theorem \ref{ccrsrsbd}, $\{\rho_1, \rho_4\}$ forms one component of the Riemann surface $S_\phi$, so if
$$
f(z) = \rho_1(z) + \rho_4(z)
$$
then $f$ is analytic on $\D \setminus E$ and bounded, thus $f$ is bounded analytic on $\D$, i.e., $f\in H^\infty(\D)$. Similarly,
$$
g(z) = f(\rho_1(z)) + f(\rho_4(z))
$$
is bounded analytic on $\D$, i.e., $g\in H^\infty(\D)$. Recall that $\{\rho_i\}$ has a group-like property on $\Omega = u^{-1}(A_s)$, we get
$$g(z) = \rho_2(z) + \rho_0(z) + \rho_0(z) + \rho_3(z) = \rho_2(z) + \rho_3(z) + 2z\quad\forall\quad z \in \Omega.$$
Since $\{\rho_2, \rho_3\}$ forms another component of the Riemann surface $S_\phi$, $\rho_2 + \rho_3$ is analytic on $\D\setminus E$, and so in $H^\infty(\D)$. This in turn implies
$$g(z) = f(\rho_1(z)) + f(\rho_4(z)) = \rho_2(z) + \rho_3(z) + 2z\quad\forall\quad z \in \D.$$
Therefore
$$g(0) = f(\rho_1(0)) + f(\rho_4(0)) = \rho_2(0) + \rho_3(0) + 0 = \alpha,$$
but $$f(\rho_1(0)) + f(\rho_4(0)) = f(0) + f(0) = 0 \neq \alpha.$$
This is a contradiction, hence the partition $\{\{0\}, \{1, 4\}, \{2, 3\}\}$ is impossible, so the partition is $\{\{0\}, \{1, 2, 3, 4\}\}$.
\end{proof}
Note that $M_{z^4\varphi_\alpha}$ is unitarily equivalent to $M_{z\varphi_\alpha^4}$. So, it follows that if $\phi = z \varphi_\alpha^4, \alpha \neq 0$, then the partition is also $\{\{0\}, \{1, 2, 3, 4\}\}$.

\begin{prop}\label{ptwpztab}
Let $\phi = z^3 \varphi_\alpha \varphi_\beta, \alpha, \beta \in \D\backslash \{0\}$. Then the partition is $\{\{0\}, \{1, 2, 3, 4\}\}$.
\end{prop}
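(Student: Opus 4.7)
The plan is to mimic Proposition~\ref{pezfpa}: argue by contradiction. Since $\phi = z^3 \varphi_\alpha \varphi_\beta$ has $\alpha, \beta \in \D \setminus \{0\}$ and hence at least two distinct zeros, it is not equivalent to any $\varphi_w^5$, so by Subsection~\ref{s21} the only alternative to the partition $\{\{0\}, \{1,2,3,4\}\}$ is $\{\{0\}, \{1,4\}, \{2,3\}\}$. Assume the latter. Then $f := \rho_1 + \rho_4$, $g := \rho_2 + \rho_3$, and $h := \rho_1 \rho_4$ all lie in $H^\infty(\D)$, and the group-like property on $\Omega$, extended to $\D$ by analytic continuation, yields
\[
f(\rho_1(z)) + f(\rho_4(z)) = g(z) + 2z, \quad g(\rho_2(z)) + g(\rho_3(z)) = f(z) + 2z, \quad h(\rho_1(z)) + h(\rho_4(z)) = z\,g(z).
\]

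Because $\rho_0(0) = 0$ and $\phi^{-1}(\phi(0)) = \{0, 0, 0, \alpha, \beta\}$, the multiset $\{\rho_1(0), \rho_2(0), \rho_3(0), \rho_4(0)\}$ equals $\{0, 0, \alpha, \beta\}$. Up to the relabeling $\omega \mapsto \omega^2$ (which swaps the two non-trivial classes) and $\alpha \leftrightarrow \beta$, only two splittings survive: \textbf{(i)} $\{\rho_1(0), \rho_4(0)\} = \{0, 0\}$ and $\{\rho_2(0), \rho_3(0)\} = \{\alpha, \beta\}$; \textbf{(ii)} $\{\rho_1(0), \rho_4(0)\} = \{0, \alpha\}$ and $\{\rho_2(0), \rho_3(0)\} = \{0, \beta\}$. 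In case~(i), the first identity at $z = 0$ gives $0 = 2 f(0) = g(0) = \alpha + \beta$, so $\beta = -\alpha$; then the same identity at $z = \alpha$, combined with the partition of the sub-multiset $\{\rho_1(\alpha), \rho_4(\alpha)\} \subset \{0, 0, 0, -\alpha\}$, either forces $\alpha = 0$ directly or yields $f(-\alpha) = 2 \alpha$, whence $|f(-\alpha)| = 2 |{-\alpha}|$; the Schwarz lemma applied to $f/2 : \D \to \D$ (with $f(0) = 0$) then forces $f(z) = -2 z$, so from $f + g = \alpha + \beta - z = -z$ one obtains $g(z) = z$ and hence $g(-\alpha) = -\alpha$, contradicting $\rho_2(-\alpha) + \rho_3(-\alpha) \in \{0, \alpha\}$ (which follows from the multiset at $-\alpha$). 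In case~(ii) (say $\rho_1(0) = 0$ and $\rho_4(0) = \alpha$), the first and third identities at $z = 0$ give $\rho_1(\alpha) + \rho_4(\alpha) = \beta - \alpha$ and $\rho_1(\alpha)\,\rho_4(\alpha) = 0$; since $\{\rho_1(\alpha), \rho_4(\alpha)\} \subset \{0, 0, 0, \beta\}$, a brief enumeration forces $\alpha = 0$ (excluded) or $\alpha = \beta$, and the latter is settled by evaluating the first identity at $z = \alpha$ to obtain $2\alpha = 3\alpha$, i.e., $\alpha = 0$.

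The chief obstacle lies in the degenerate subcases $\beta = -\alpha$ (case~(i)) and $\alpha = \beta$ (case~(ii)), where the identities evaluated at $z = 0$ alone are insufficient to force a contradiction. These are dispatched by re-evaluating the same identities at $z = \alpha$ (or $z = -\alpha$), supplemented in the $\beta = -\alpha$ instance by the Schwarz lemma, which pins down a specific form of $f$ whose consequences then contradict the multiset information at $z = -\alpha$.
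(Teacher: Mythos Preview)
Your proof is correct and follows the same overall strategy as the paper: argue by contradiction, exploit the group-like identities among the symmetric functions of the local inverses, and evaluate at $z=0$ and at zeros of $\phi$. Your case~(ii) matches the paper's Cases~III and~VI essentially verbatim.

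The difference lies in case~(i). The paper uses the \emph{multiplicative} identity
\[
f(\rho_1(z))\,f(\rho_4(z)) = (\rho_2(z)+z)(\rho_3(z)+z),
\]
which at $z=0$ gives $0 = f(0)^2 = \alpha\beta \neq 0$, an immediate contradiction with no subcases. Your additive route only yields $\alpha+\beta=0$ at the first pass and then requires re-evaluating at $z=\alpha$ and invoking the Schwarz lemma. The Schwarz-lemma detour, while valid, is unnecessary: once you have $f(-\alpha)=2\alpha$, the multiset $\{\rho_1(-\alpha),\rho_4(-\alpha)\}\subset\{0,0,0,\alpha\}$ already forces $f(-\alpha)\in\{0,\alpha\}$, a direct contradiction. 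Finally, you invoke $f+g=\alpha+\beta-z$ without comment; this is correct (Vieta for $\phi(w)=\phi(z)$ gives $\sum_{i=0}^4\rho_i(z)=\alpha+\beta$) but deserves a line of justification.
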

\begin{proof}
We prove it by contradiction. We have the following two cases.

(a) $\alpha \neq \beta$. If the partition is not $\{\{0\}, \{1, 2, 3, 4\}\}$, then it is $\{\{0\}, \{1, 4\}, \{2, 3\}\}$. Note that $\phi^{-1} \circ \phi(0) = \{0, 0, 0, \alpha, \beta\}$. So
$$
\{\rho_1(0), \rho_4(0)\} = \{0, 0\}, \{0, \alpha\}, \{0, \beta\}\quad\hbox{or}\quad \{\alpha, \beta\}.
$$

Case I: If $\{\rho_1(0), \rho_4(0)\} = \{0, 0\}$, then $\{\rho_2(0), \rho_3(0)\} = \{\alpha, \beta\}$. Let
$$
f(z) = \rho_1(z) + \rho_4(z), g(z) = f(\rho_1(z))  f(\rho_4(z)).
$$
As in Proposition \ref{pezfpa}, we can get
$$
f, g \in H^\infty(\D)\quad\&\quad g(z) = [\rho_2(z) + z]  [\rho_3(z) + z],
$$
whence finding
$$
g(0) = f(\rho_1(0))  f(\rho_4(0)) = \alpha\beta = f(0)  f(0) = 0
$$
which is a contradiction.

Case II: If $\{\rho_1(0), \rho_4(0)\}= \{\alpha, \beta\}\ \& \ \{\rho_2(0), \rho_3(0)\}= \{0, 0\}$, this is essentially Case I.

Case III: If $\{\rho_1(0), \rho_4(0)\} = \{0, \alpha\}\ \&\ \{\rho_2(0), \rho_3(0)\} = \{0, \beta\}$, then letting
$$
\begin{cases}
f(z) = \rho_1(z) + \rho_4(z);\\
g_1(z) = f(\rho_1(z)) + f(\rho_4(z)) = \rho_2(z) + \rho_3(z) + 2z,
\end{cases}
$$
gives
$$
g_1(0) = \beta = f(0) + f(\alpha) = \alpha + f(\alpha).
$$
Note that
$$
\phi^{-1} \circ \phi(\alpha) = \{0, 0, 0, \alpha, \beta\}\ \& \ \rho_0(\alpha) = \alpha.
$$
Thus we have $f(\alpha) = 0$ or $\beta$. Since $\alpha \neq \beta\ \&\ \alpha\beta \neq 0$, we cannot get $\alpha + f(\alpha) = \beta$, thus we have a contradiction.

Case IV: If $\{\rho_1(0), \rho_4(0)\} = \{0, \beta\}\ \&\ \{\rho_2(0), \rho_3(0)\} = \{0, \alpha\}$, this is essentially Case III.

Therefore, if $\phi = z^3 \varphi_\alpha \varphi_\beta, \alpha \neq \beta, \alpha\beta \neq 0$, then the partition is $\{\{0\}, \{1, 2, 3, 4\}\}$.

(b) $\alpha = \beta$. If the partition is not $\{\{0\}, \{1, 2, 3, 4\}\}$, then it is $\{\{0\}, \{1, 4\}, \{2, 3\}\}$. Note that $\phi^{-1} \circ \phi(0) = \{0, 0, 0, \alpha, \alpha\}$, there are essentially two cases: $\{\rho_1(0), \rho_4(0)\} = \{0, 0\}$ or $\{0, \alpha\}$.

Case V: If $\{\rho_1(0), \rho_4(0)\} = \{0, 0\}$, then $\{\rho_2(0), \rho_3(0)\}= \{\alpha, \alpha\}$. Let
$$
\begin{cases}
f(z) = \rho_1(z) + \rho_4(z);\\
g(z) = f(\rho_1(z)) + f(\rho_4(z)) = \rho_2(z) + \rho_3(z) + 2z.
\end{cases}
$$
Then $g(0) = 2f(0) = 2\alpha$, and hence $f(0) = \alpha$, but $f(0) = 0$, this is a contradiction.

Case VI: If $\{\rho_1(0), \rho_4(0)\} = \{0, \alpha\}$, then $\{\rho_2(0), \rho_3(0)\}= \{0, \alpha\}$. If $f, g$ are as in Case V, then
$$
g(0) = f(0) + f(\alpha) = \alpha.
$$
Since $f(0) = \alpha$, $f(\alpha) = 0$. Upon noticing
$$
\phi^{-1} \circ \phi(\alpha) = \{0, 0, 0, \alpha, \alpha\}\quad\&\quad \rho_0(\alpha) = \alpha,
$$
we have
$$
\{\rho_1(\alpha), \rho_4(\alpha)\} = \{0, 0\}\ \&\ \{\rho_2(\alpha), \rho_3(\alpha)\} = \{0, \alpha\}.
$$
thereby getting $g(\alpha) = 2f(0) = 3\alpha$ - this is a contradiction. Hence the partition is $\{\{0\}, \{1, 2, 3, 4\}\}$. The proof is complete.
\end{proof}

Note that $M_{z^3\varphi_\alpha^2}$ is unitarily equivalent to $M_{z^2\varphi_\alpha^3}$, it follows that if $\phi = z^2 \varphi_\alpha^3$ then the partition is also $\{\{0\}, \{1, 2, 3, 4\}\}$.

\begin{prop}\label{ptfpezsabg}
Let $\phi = z^2 \varphi_\alpha \varphi_\beta \varphi_\gamma$ with $\alpha, \beta, \gamma$ being mutually distinct and $\alpha\beta\gamma \neq 0$. Then the partition is $\{\{0\}, \{1, 2, 3, 4\}\}$.
\end{prop}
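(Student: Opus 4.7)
The plan is to proceed by contradiction in the spirit of Propositions \ref{pezfpa} and \ref{ptwpztab}. If the partition is not $\{\{0\}, \{1,2,3,4\}\}$, then by the enumeration at the end of Section \ref{s21} it must be $\{\{0\}, \{1,4\}, \{2,3\}\}$. Since $\phi^{-1}\circ\phi(0) = \{0, 0, \alpha, \beta, \gamma\}$ and $\rho_0(0) = 0$, the multiset $\{\rho_1(0), \rho_2(0), \rho_3(0), \rho_4(0)\}$ is a permutation of $\{0, \alpha, \beta, \gamma\}$. Up to relabeling the three nonzero roots, this leaves two essentially distinct configurations: either $\{\rho_1(0), \rho_4(0)\} = \{0, \alpha\}$ and $\{\rho_2(0), \rho_3(0)\} = \{\beta, \gamma\}$ (call this Case A), or $\{\rho_1(0), \rho_4(0)\} = \{\alpha, \beta\}$ and $\{\rho_2(0), \rho_3(0)\} = \{0, \gamma\}$ (Case B).

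In Case A I would introduce the symmetric functions
\[
f(z) = \rho_1(z) + \rho_4(z), \qquad g(z) = f(\rho_1(z)) + f(\rho_4(z)),
\]
both of which extend to bounded analytic functions on $\D$ because $\{\rho_1, \rho_4\}$ is one equivalence class. The group-like identity $\rho_i \circ \rho_j = \rho_{i+j \bmod 5}$ simplifies $g$ to $\rho_2(z) + \rho_3(z) + 2z$, which is analytic on $\D$ as well since $\{\rho_2, \rho_3\}$ is the other equivalence class. Evaluating $g(0)$ two ways gives $\alpha + f(\alpha) = \beta + \gamma$, so $f(\alpha) = \beta + \gamma - \alpha$. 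On the other hand, $\phi^{-1}\circ\phi(\alpha) = \{0, 0, \alpha, \beta, \gamma\}$ with $\rho_0(\alpha) = \alpha$, so $\{\rho_1(\alpha), \rho_4(\alpha)\}$ is a pair drawn from $\{0, 0, \beta, \gamma\}$, and therefore $f(\alpha) \in \{0, \beta, \gamma, \beta + \gamma\}$. Matching this list against $\beta + \gamma - \alpha$ and invoking $\alpha \ne 0$ together with the mutual distinctness of $\alpha, \beta, \gamma$, three of the four possibilities fail instantly, leaving only $\alpha = \beta + \gamma$ with $\{\rho_1(\alpha), \rho_4(\alpha)\} = \{0, 0\}$ and $\{\rho_2(\alpha), \rho_3(\alpha)\} = \{\beta, \gamma\}$. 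One further evaluation at $z = \alpha$ then yields $g(\alpha) = f(0) + f(0) = 2\alpha$ on one hand and $\beta + \gamma + 2\alpha = 3\alpha$ on the other, forcing $\alpha = 0$, a contradiction.

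Case B is dispatched by the mirror argument, using $h(z) = \rho_2(z) + \rho_3(z)$ and $k(z) = h(\rho_2(z)) + h(\rho_3(z))$, which collapse to $\rho_1(z) + \rho_4(z) + 2z$ via the same group-like identity. The analogous two-step evaluation (first at $0$, then at $\gamma$) extracts $\gamma = \alpha + \beta$ and subsequently $3\gamma = 2\gamma$, again a contradiction. The place where the distinctness and nonvanishing hypotheses genuinely enter is the four-way elimination that follows the initial $g(0)$ (respectively $k(0)$) calculation: without them, several auxiliary subcases would survive and would require further evaluations at $\beta$ or $\gamma$ to eliminate, exactly as happens in the more delicate Cases V and VI of Proposition \ref{ptwpztab}. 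With mutual distinctness that elimination becomes essentially automatic, and the proof reduces to the two-step check sketched above; this is the only step I expect to require any care, and it is where the hypothesis $\alpha, \beta, \gamma$ pairwise distinct and nonzero is indispensable.
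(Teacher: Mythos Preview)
Your proposal is correct and follows essentially the same approach as the paper's proof: argue by contradiction, assume the partition is $\{\{0\},\{1,4\},\{2,3\}\}$, form the symmetric function $f=\rho_1+\rho_4$, use the group-like identity to compute $g=f\circ\rho_1+f\circ\rho_4=\rho_2+\rho_3+2z$, and compare the two evaluations of $g(0)$ (and then $g(\alpha)$) against the finite list of possible values of $f(\alpha)$ coming from $\phi^{-1}\circ\phi(\alpha)$. The only difference is cosmetic: the paper dispatches your Case~B with a single ``without loss of generality,'' relying implicitly on exactly the mirror argument (with $h=\rho_2+\rho_3$) that you write out, whereas you make both cases explicit.
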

\begin{proof}
We prove it by contradiction. If the partition is not $\{\{0\}, \{1, 2, 3, 4\}\}$, then it is $\{\{0\}, \{1, 4\}, \{2, 3\}\}$. Without loss of generality, suppose
$$
\begin{cases}
\{\rho_1(0), \rho_4(0)\} = \{0, \alpha\};\\
\{\rho_2(0), \rho_3(0)\} = \{\beta, \gamma\}.
\end{cases}
$$
If
$$
\begin{cases}
f(z) = \rho_1(z) + \rho_4(z);\\
g(z) = f(\rho_1(z)) + f(\rho_4(z)) = \rho_2(z) + \rho_3(z) + 2z,
\end{cases}
$$
then
$$
g(0)  = f(0) + f(\alpha) = \beta + \gamma = \alpha + f(\alpha).
$$
Note that
$$
\phi^{-1} \circ \phi(\alpha) = \{0, 0, \alpha, \beta, \gamma\}\ \&\ \rho_0(\alpha) = \alpha.
$$
So, we have $f(\alpha) = 0, \beta, \gamma$ or $\beta+\gamma$.

If $f(\alpha) = \beta + \gamma$, then $\alpha = 0$ - this is a contradiction. If $f(\alpha) = \gamma$, then $\alpha = \beta$ - this is also a contradiction. Similarly, if $f(\alpha) = \beta$, then $\alpha = \gamma$, which is also a contradiction. If $f(\alpha) = 0$, then
$$
\begin{cases}
\alpha = \beta + \gamma;\\
\{\rho_1(\alpha), \rho_4(\alpha)\} = \{0, 0\};\\
\{\rho_2(\alpha), \rho_3(\alpha)\} = \{\beta, \gamma\}.
\end{cases}
$$
So
$$
g(\alpha) = 2f(0) = 2\alpha + \beta + \gamma.
$$
Since $f(0) = \alpha$, it follows that $\beta + \gamma = \alpha = 0$ - this is a contradiction. Thus the partition is $\{\{0\}, \{1, 2, 3, 4\}\}$.
\end{proof}

Before discussing the partition for $\phi = z^2 \varphi_\alpha^2 \varphi_\beta, \alpha \neq \beta, \alpha\beta \in \D\backslash\{0\}$, we need two more lemmas.

\begin{lemma}\label{farffud}
Let
$$
\begin{cases}
\phi = z^2\varphi_\alpha\varphi_\beta\varphi_\gamma = \frac{P(z)}{Q(z)};\\
\alpha, \beta, \gamma \in \D;\\
f(w,z) =  P(w)Q(z) - P(z)Q(w).
\end{cases}
$$
If
$$f(w,z) = (w-z)[d_0(z)w^4 + d_1(z) w^3 + d_2(z) w^2 + d_3(z) w + d_4(z)],$$
then
$$
\begin{cases}
d_4(z) = z(\alpha - z) (\beta-z) (\gamma - z);\\
d_3(z) = (\alpha - z) (\beta-z) (\gamma - z) [1-(\overline{\alpha} + \overline{\beta} + \overline{\gamma})z];\\
d_2(z)= (1- \overline{\alpha}z) (1- \overline{\beta}z)(1-\overline{\gamma}z) [-(\alpha\beta+\beta\gamma+\alpha\gamma)+(\alpha+\beta+\gamma)z - z^2]\\
\quad\quad\quad +\ \overline{\alpha} \overline{\beta} \overline{\gamma} z^2(\alpha - z) (\beta-z) (\gamma - z);\\
d_1(z) = (1- \overline{\alpha}z) (1- \overline{\beta}z)(1-\overline{\gamma}z) [(\alpha+\beta+\gamma) - z];\\
d_0(z) = - (1- \overline{\alpha}z) (1- \overline{\beta}z)(1-\overline{\gamma}z).$$
\end{cases}
$$
\end{lemma}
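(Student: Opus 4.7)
The plan is a direct computation: write $\phi = P/Q$ in the factored form and extract $d_0,\dots,d_4$ by long division in $w$. Concretely, set
$$P(z)=z^2(\alpha-z)(\beta-z)(\gamma-z),\qquad Q(z)=(1-\overline{\alpha}z)(1-\overline{\beta}z)(1-\overline{\gamma}z),$$
and introduce the elementary symmetric functions $s_1=\alpha+\beta+\gamma$, $s_2=\alpha\beta+\alpha\gamma+\beta\gamma$, $s_3=\alpha\beta\gamma$ and their conjugates $\bar s_1,\bar s_2,\bar s_3$. Then $P(w)=s_3w^2-s_2w^3+s_1w^4-w^5$ and $Q(w)=1-\bar s_1w+\bar s_2w^2-\bar s_3w^3$, so $f(w,z)=P(w)Q(z)-P(z)Q(w)$ is a polynomial in $w$ of degree five whose coefficients I can read off:
$$f(w,z)=-Q(z)\,w^5+s_1Q(z)\,w^4+\bigl(-s_2Q(z)+\bar s_3P(z)\bigr)w^3+\bigl(s_3Q(z)-\bar s_2P(z)\bigr)w^2+\bar s_1P(z)\,w-P(z).$$

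Since $f(z,z)=0$, $f$ is divisible by $w-z$. I would perform the division by matching coefficients of $w^5,w^4,\dots,w^0$ in the identity $(w-z)(d_0w^4+d_1w^3+d_2w^2+d_3w+d_4)$ against the expression above. This gives the recursion
$$d_0=-Q(z),\quad d_1=zd_0+s_1Q(z),\quad d_2=zd_1-s_2Q(z)+\bar s_3P(z),$$
$$d_3=zd_2+s_3Q(z)-\bar s_2P(z),\quad d_4=zd_3+\bar s_1P(z),$$
with the consistency check $-zd_4=-P(z)$, i.e.\ $d_4=P(z)/z=z(\alpha-z)(\beta-z)(\gamma-z)$.

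Finally I would simplify each $d_k$ using the identity $(\alpha-z)(\beta-z)(\gamma-z)=s_3-s_2z+s_1z^2-z^3$. The telescoping cancellations fall out cleanly: for $d_1$ one gets $(s_1-z)Q(z)$; for $d_2$ one gets $(-s_2+s_1z-z^2)Q(z)+\bar s_3P(z)$; for $d_3$, after substituting $P(z)=z^2(\alpha-z)(\beta-z)(\gamma-z)$ one factors out $(\alpha-z)(\beta-z)(\gamma-z)$, and the remaining bracket $Q(z)+z^2(\bar s_3z-\bar s_2)=1-\bar s_1z$ is exactly what the statement predicts. There is no real obstacle here: the whole argument is bookkeeping, and the only mildly delicate step is recognizing that the combination defining $d_3$ collapses to $(\alpha-z)(\beta-z)(\gamma-z)(1-\bar s_1z)$ after using the factorization of $Q$.
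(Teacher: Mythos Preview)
Your argument is correct. You expand $f(w,z)$ explicitly as a degree-five polynomial in $w$ and then perform synthetic division by $w-z$, obtaining the recursion $d_k = z\,d_{k-1} + (\text{coefficient of }w^{5-k})$, which you simplify term by term; the collapse in $d_3$ via $Q(z)+z^2(\bar s_3 z-\bar s_2)=1-\bar s_1 z$ is exactly right.

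The paper takes a slightly different computational route: instead of polynomial division, it evaluates the partial derivatives $\frac{\partial^k f}{\partial w^k}(0,z)$ for $k=0,1,2,3,5$ directly from the product form of $f$, and then uses the relations $f(0,z)=-z d_4$, $\frac{\partial f}{\partial w}(0,z)=-z d_3+d_4$, etc., coming from the Leibniz rule applied to $(w-z)\sum d_j w^{4-j}$, to solve for the $d_j$'s one at a time. Both methods are pure bookkeeping; your synthetic-division recursion is arguably the more systematic of the two, since it reads off all coefficients at once and makes the final simplifications (especially the cancellation yielding $d_3$) more transparent. The paper's derivative approach has the minor advantage of never writing out $P(w)$ and $Q(w)$ in expanded form, working instead from the factored product directly.
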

\begin{proof}
Since
\begin{align*}
f(w,z)& = P(w)Q(z) - P(z)Q(w) \\
& = w^2 (\alpha - w) (\beta-w) (\gamma - w)(1- \overline{\alpha}z) (1- \overline{\beta}z)(1-\overline{\gamma}z) \\
&\hspace{0.5cm}- z^2 (\alpha - z) (\beta-z) (\gamma - z)(1- \overline{\alpha}w) (1- \overline{\beta}w)(1-\overline{\gamma}w),
\end{align*}
we have the following five equations:
$$
f(0,z) = -z^2 (\alpha - z) (\beta-z) (\gamma - z);
$$
$$
\frac{\partial f}{\partial w}(0,z) = -z^2 (\alpha - z) (\beta-z) (\gamma - z) [-(\overline{\alpha} + \overline{\beta} + \overline{\gamma})];
$$
$$
\frac{\partial^2 f}{\partial w^2}(0,z) = 2 \alpha\beta\gamma (1- \overline{\alpha}z) (1- \overline{\beta}z)(1-\overline{\gamma}z) - 2z^2 (\alpha - z) (\beta-z) (\gamma - z) (\overline{\alpha\beta} + \overline{\beta\gamma} + \overline{\alpha\gamma});
$$
$$
\frac{\partial^3 f}{\partial w^3}(0,z) = -6 (\alpha\beta+\beta\gamma+\alpha\gamma) (1- \overline{\alpha}z) (1- \overline{\beta}z)(1-\overline{\gamma}z) - z^2 (\alpha - z) (\beta-z) (\gamma - z) (-6\overline{\alpha} \overline{\beta} \overline{\gamma});
$$
$$
\frac{\partial^5 f}{\partial w^5}(0,z) = -5! (1- \overline{\alpha}z) (1- \overline{\beta}z)(1-\overline{\gamma}z).
$$
Note that
$$
\begin{cases}
f(0,z) = - zd_4(z);\\
\frac{\partial f}{\partial w}(0,z) = -zd_3(z) + d_4(z);\\
\frac{\partial^2 f}{\partial w^2}(0,z) = -2zd_2(z) + 2d_3(z);\\
\frac{\partial^3 f}{\partial w^3}(0,z) = - 6zd_1(z) + 6d_2(z);\\ \frac{1}{5!}\frac{\partial^5 f}{\partial w^5}(0,z) = d_0(z).
\end{cases}
$$
 The conclusion follows from the equations of the derivatives of $f$.
\end{proof}

\begin{lemma}\label{psptwpds}
Let $\phi = z^2 \varphi_\alpha^2 \varphi_\beta, \alpha \neq \beta, \alpha\beta \neq 0$. If the partition for $\phi$ is $\{\{0\}, \{1, 4\}, \{2, 3\}\}$, then
$$
\begin{cases}
\{\rho_1(0), \rho_4(0)\} = \{0, \alpha\};\\
\{\rho_1(\alpha), \rho_4(\alpha)) = \{0, \beta\};\\
\{\rho_1(\beta), \rho_4(\beta)) = \{\alpha, \alpha\},
\end{cases}
$$
and
$$
\begin{cases}
\{\rho_2(0), \rho_3(0)\} = \{\alpha, \beta\};\\
\{\rho_2(\alpha), \rho_3(\alpha)\} = \{0, \alpha\};\\
\{\rho_2(\beta), \rho_3(\beta)\} = \{0, 0\}.
\end{cases}
$$

\end{lemma}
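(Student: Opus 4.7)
The plan is to combine the symmetric-function identity derived in the style of Proposition \ref{ptwpztab} with careful bookkeeping of the three fibers at $z\in\{0,\alpha,\beta\}$. Set $F_1(z) = \rho_1(z) + \rho_4(z)$ and $F_2(z) = \rho_2(z) + \rho_3(z)$; both lie in $H^\infty(\D)$ since $G_1 = \{\rho_1,\rho_4\}$ and $G_2 = \{\rho_2,\rho_3\}$ are equivalence classes. Using the group-like property $\rho_i\circ\rho_j = \rho_{i+j\,(\text{mod }5)}$ on $\Omega$, together with $\rho_1\circ\rho_1 = \rho_2$, $\rho_4\circ\rho_4 = \rho_3$, and $\rho_1\circ\rho_4 = \rho_4\circ\rho_1 = \rho_0$, I obtain exactly as in Proposition \ref{ptwpztab} the identity
$$
F_1(\rho_1(z)) + F_1(\rho_4(z)) \;=\; F_2(z) + 2z, \qquad z\in\D.
$$

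The three fibers all equal $\phi^{-1}(0) = \{0,0,\alpha,\alpha,\beta\}$ as multisets; subtracting $\rho_0$'s contribution at each base-point shows that $\{\rho_1(z),\rho_2(z),\rho_3(z),\rho_4(z)\}$ is the multiset $\{0,\alpha,\alpha,\beta\}$, $\{0,0,\alpha,\beta\}$, $\{0,0,\alpha,\alpha\}$ at $z = 0,\alpha,\beta$ respectively. Since $\phi'(\beta)\ne 0$ (the only surviving term in $\phi'(\beta)$ is $\beta^2\varphi_\alpha(\beta)^2\varphi_\beta'(\beta)\ne 0$), $\beta$ is a non-critical preimage of the critical value $0$, so a simple loop around $\beta$ in $\D\setminus E$ pushes forward to a simple loop around $0$ in the $w$-plane. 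This induces on the local inverses the permutation given by the product of two disjoint transpositions (swapping the two $0$-valued sheets and the two $\alpha$-valued sheets above $\beta$), whose orbits must lie inside the equivalence classes. Hence the two $0$-valued inverses at $\beta$ belong to one class and the two $\alpha$-valued inverses to the other, ruling out any mixed pair such as $\{\rho_1(\beta),\rho_4(\beta)\}=\{0,\alpha\}$. Replacing the primitive fifth root $\omega$ by $\omega^2$ in the canonical labeling $\rho_i = u^{-1}(\omega^i u)$ swaps $G_1$ and $G_2$, so without loss of generality we may assume $\{\rho_1(\beta),\rho_4(\beta)\} = \{\alpha,\alpha\}$ and $\{\rho_2(\beta),\rho_3(\beta)\} = \{0,0\}$, whence $F_1(\beta) = 2\alpha$ and $F_2(\beta) = 0$.

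The remaining values fall out from two evaluations of the identity. At $z=\beta$ it reads $2F_1(\alpha) = F_2(\beta) + 2\beta = 2\beta$, so $F_1(\alpha) = \beta$; the unique size-two submultiset of $\{0,0,\alpha,\beta\}$ summing to $\beta$ is $\{0,\beta\}$, giving $\{\rho_1(\alpha),\rho_4(\alpha)\} = \{0,\beta\}$, $\{\rho_2(\alpha),\rho_3(\alpha)\} = \{0,\alpha\}$, and $F_2(\alpha) = \alpha$. At $z=\alpha$ the identity reads $F_1(0) + F_1(\beta) = F_2(\alpha) + 2\alpha = 3\alpha$; combined with $F_1(\beta) = 2\alpha$ this yields $F_1(0) = \alpha$, and the unique size-two submultiset of $\{0,\alpha,\alpha,\beta\}$ with sum $\alpha$ is $\{0,\alpha\}$, forcing $\{\rho_1(0),\rho_4(0)\} = \{0,\alpha\}$ and $\{\rho_2(0),\rho_3(0)\} = \{\alpha,\beta\}$.

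The main obstacle is the monodromy step: justifying that the orbits of the local monodromy at $\beta$ agree precisely with the same-value pairs of the fiber, so that the mixed option $\{0,\alpha\}$ for $\{\rho_1(\beta),\rho_4(\beta)\}$ is excluded. Once this and the WLOG relabeling are in place, everything else amounts to elementary arithmetic in the three explicit multiset fibers combined with two evaluations of the key identity.
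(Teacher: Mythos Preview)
Your argument is correct and takes a genuinely different route from the paper's proof. The paper proceeds by case analysis at $z = 0$: it observes that up to swapping $G_2$ and $G_3$ the only possibilities for $\{\rho_1(0), \rho_4(0)\}$ are $\{0,\beta\}$ and $\{0,\alpha\}$, rules out $\{0,\beta\}$ by two successive evaluations of the same identity $g(z) = f(\rho_1(z)) + f(\rho_4(z)) = \rho_2(z) + \rho_3(z) + 2z$ (first at $z=0$, then at $z=\beta$, obtaining a contradiction in every subcase), and only then propagates forward from $z=0$ to $z=\alpha$ to $z=\beta$. You instead bootstrap at $z=\beta$ via local monodromy: since $\beta$ is a simple (non-critical) point of the fiber over the critical value $0$, a small loop around $\beta$ induces on the five sheets the product of the two transpositions coming from the double points $0$ and $\alpha$, and each size-$2$ orbit of this permutation must fill one of the two size-$2$ equivalence classes, which excludes the mixed option $\{0,\alpha\}$ at $\beta$ directly. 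From there you propagate backwards to $z=\alpha$ and $z=0$ using the same identity. The paper's approach is entirely elementary (only symmetric-function identities and case elimination), while yours imports a structural fact about the branched covering; the payoff is a shorter and more conceptual elimination of cases, at the cost of having to justify the monodromy step carefully --- which you do, since each monodromy orbit is contained in an equivalence class by definition of $\sim$, and a size-$2$ subset of a size-$2$ class equals it.
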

\begin{proof}
Suppose the partition for $\phi$ is $\{\{0\}, \{1, 4\}, \{2, 3\}\}$. If
$$
f(z) = \rho_1(z) + \rho_4(z), g(z) = f(\rho_1(z)) + f(\rho_4(z)),
$$
then
$$
g(z) = \rho_2(z) + \rho_3(z) + 2z.
$$
There are essentially two cases:
$$
\{\rho_1(0), \rho_4(0)\} = \{0, \beta\}\ \hbox{or}\ \{\rho_1(0), \rho_4(0)\} = \{0, \alpha\}.
$$

If
$$
\{\rho_1(0), \rho_4(0)\} = \{0, \beta\}\ \& \ \{\rho_2(0), \rho_3(0)\} = \{\alpha, \alpha\},
$$
then
$$g(0) = 2\alpha = f(0) + f(\beta) = \beta + f(\beta).$$
Note that
$$
\phi^{-1}\circ \phi(\beta) = \{0, 0, \alpha, \alpha, \beta\}\ \&\ \rho_0(\beta) = \beta.
$$
So we have $f(\beta) = 0, 2\alpha$ or $\alpha$. If $f(\beta) = \alpha$, then $\alpha = \beta$, this is a contradiction. If $f(\beta) = 2 \alpha$, then $\beta = 0$ - this is also a contradiction. If $f(\beta) = 0$, then
$$
\begin{cases}
\beta = 2\alpha;\\
\{\rho_1(\beta), \rho_4(\beta)\} = \{0, 0\};\\
\{\rho_2(\beta), \rho_3(\beta)\} = \{\alpha, \alpha\},
\end{cases}
$$
and hence
$$
g(\beta) = 2\alpha + 2\beta = 2f(0) = 2\beta,
$$
so $\alpha = 0$ - this is a contradiction. Consequently, we have
$$
\{\rho_1(0), \rho_4(0)\} = \{0, \alpha\}\ \&\ \{\rho_2(0), \rho_3(0)\} = \{\alpha, \beta\}.
$$
Now, let $f, g$ be the same as above. Then
$$g(0) = \alpha + \beta = f(0) + f(\alpha) = \alpha + f(\alpha).$$
This in turn implies
$$
\begin{cases}
f(\alpha) = \beta;\\
\{\rho_1(\alpha), \rho_4(\alpha)\} = \{0, \beta\}, \{\rho_2(\alpha), \rho_3(\alpha)\} = \{0, \alpha\}.
\end{cases}
$$
Then
$$g(\alpha) = \alpha + 2\alpha = f(0) + f(\beta) = \alpha + f(\beta),$$
and hence
$$
\begin{cases}
f(\beta) = 2\alpha;\\
\{\rho_1(\beta), \rho_4(\beta)\} = \{\alpha, \alpha\};\\
\{\rho_2(\beta), \rho_3(\beta)\} = \{0, 0\}.
\end{cases}
$$
This finishes the proof.
\end{proof}
Now we can study the partition for $\phi = z^2 \varphi_\alpha^2 \varphi_\beta, \alpha \neq \beta, \alpha\beta \neq 0$.

\begin{prop}\label{ptwpstrs}
Let $\phi = z^2 \varphi_\alpha^2 \varphi_\beta, \alpha \neq \beta, \alpha\beta \neq 0$. Then the partition for $\phi$ is $$\{\{0\}, \{1, 4\}, \{2, 3\}\}$$ if and only if $\alpha/\beta \in \R$ and $\varphi_\beta(\alpha) = \frac{\alpha^2}{\beta}$.
\end{prop}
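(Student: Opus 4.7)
The plan is to use Theorem \ref{ccrsrsbd} to rephrase the partition condition as a factorization of $f(w,z)=P(w)Q(z)-P(z)Q(w)$ in $\mathbb{C}[z,w]$, and then to extract the algebraic constraint on $(\alpha,\beta)$ from the boundary data of Lemma \ref{psptwpds}.

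Assume first that the partition is $\{\{0\},\{1,4\},\{2,3\}\}$. By Theorem \ref{ccrsrsbd} we may write $f(w,z)=(w-z)\,h_1(w,z)\,h_2(w,z)$ in $\mathbb{C}[z,w]$ with $h_1,h_2$ irreducible of degree $2$ in $w$, where $h_j$ vanishes on the branch corresponding to $G_j$. Since $G_j^{-1}=G_j$ (for $n=5$ and the partition above), the involution $(w,z)\leftrightarrow(z,w)$ preserves each zero set, so by irreducibility $h_j(w,z)=\mu_j h_j(z,w)$ with $\mu_j^2=1$; the case $\mu_j=-1$ is excluded because it would force $(w-z)\mid h_j$. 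Hence each $h_j$ is symmetric in $(w,z)$. Writing $h_j(w,z)=A_j(z)w^2+B_j(z)w+C_j(z)$, this symmetry forces $\deg A_j,\deg B_j,\deg C_j \le 2$ in $z$ and identifies the coefficient of $w^i z^k$ with that of $w^k z^i$ in $h_j$.

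Lemma \ref{psptwpds} evaluates $h_1(w,\cdot)$ at $z\in\{0,\alpha,\beta\}$ up to scalar as $w(w-\alpha)$, $w(w-\beta)$, $(w-\alpha)^2$ respectively, and similarly for $h_2$. Combining this with $C_1(z)C_2(z)=d_4(z)=-z(z-\alpha)^2(z-\beta)$ (Lemma \ref{farffud}) yields $C_1(z)=\kappa z(z-\alpha)$ and $C_2(z)=-\kappa^{-1}(z-\alpha)(z-\beta)$, and the symmetry then determines $B_j$ modulo one scalar. The remaining freedom is the split of $A_1(z)A_2(z)=d_0(z)=-(1-\bar{\alpha}z)^2(1-\bar{\beta}z)$ between $A_1$ and $A_2$; the constraints $\deg A_j\le 2$ and $\deg A_1+\deg A_2=3$ admit exactly four such splits. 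For each, the matching condition $C_1(\beta)=A_1(\beta)\alpha^2$ from Lemma \ref{psptwpds} yields one complex equation on $(\alpha,\beta)$, and the analogous identity for $h_2$ at $z=\alpha,\beta$ yields a second. Three of the four splits are ruled out because the resulting pair of equations forces $\alpha=\beta$, $\alpha\beta=0$, or one of $|\alpha|,|\beta|\ge 1$. The surviving split is $A_1(z)=\kappa(1-\bar{\alpha}z)$ and $A_2(z)=-\kappa^{-1}(1-\bar{\alpha}z)(1-\bar{\beta}z)$, and the first equation then reads $\beta(\beta-\alpha)=\alpha^2(1-\bar{\alpha}\beta)$, equivalently $t^2+t(|\alpha|^2-1)-1=0$ for $t=\beta/\alpha$.

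Writing $t=a+bi$ and substituting the companion equation from $h_2$, the imaginary parts of the two equations collapse to $b(2a+|\alpha|^2-1)=0$ and $b(|t|^2+2a-1)=0$; if $b\ne 0$ these give $|t|^2=|\alpha|^2$ and $|\alpha|^2=1-2a$, whereupon the real part of the first equation becomes $a^2+b^2+1=0$, which is impossible. Hence $b=0$, i.e.\ $\alpha/\beta\in\mathbb{R}$; then $\bar{\alpha}\beta=\bar{\beta}\alpha$, and the first equation becomes $\beta(\beta-\alpha)=\alpha^2(1-\bar{\beta}\alpha)$, i.e.\ $\varphi_\beta(\alpha)=\alpha^2/\beta$. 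For the converse, given both conditions, we define $h_1,h_2$ by the explicit formulas above (the free scalar in $B_j$ being fixed by evaluation at $z=\alpha$) and verify $(w-z)h_1(w,z)h_2(w,z)=f(w,z)$ by direct polynomial computation: both sides are antisymmetric in $(w,z)$ with leading coefficient $-Q(z)$ in $w$, and the joint vanishing of both at $z\in\{0,\alpha,\beta\}$ (and, by antisymmetry, at $w\in\{0,\alpha,\beta\}$) forces the difference to be a scalar multiple of $(w-z)w(w-\alpha)(w-\beta)z(z-\alpha)(z-\beta)$, the scalar being zero by matching one further coefficient. Theorem \ref{ccrsrsbd} then gives the partition. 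The main technical step is the four-case analysis together with the real/imaginary separation isolating $\operatorname{Im}(\beta/\alpha)=0$.
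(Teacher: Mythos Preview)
Your approach is essentially the paper's: invoke Theorem \ref{ccrsrsbd} to factor $f(w,z)=(w-z)h_1h_2$, use Lemma \ref{psptwpds} to pin down the constant terms $C_1,C_2$, then do a case analysis on how $d_0(z)=-(1-\bar\alpha z)^2(1-\bar\beta z)$ splits between $A_1$ and $A_2$. The symmetry observation $h_j(w,z)=h_j(z,w)$ (which the paper also uses, less formally) and the early determination of $C_1,C_2$ streamline the paper's eight subcases down to your four, which is a genuine improvement in bookkeeping.

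There are, however, two gaps. First, the assertion that ``three of the four splits are ruled out because the resulting pair of equations forces $\alpha=\beta$, $\alpha\beta=0$, or $|\alpha|,|\beta|\ge 1$'' is not substantiated; the paper's case (g), for instance, requires a nontrivial coefficient comparison ending in $-(1-|\alpha|^2)=t^2$, which does not fit any of the three alternatives you list. Second, and more seriously, the ``companion equation from $h_2$'' is never written down, so the subsequent claims $|t|^2=|\alpha|^2$ and $|\alpha|^2=1-2a$ cannot be checked. This entire detour is in fact unnecessary: your first equation
\[
t^2+t(|\alpha|^2-1)-1=0,\qquad t=\beta/\alpha,
\]
is a quadratic in $t$ with \emph{real} coefficients and discriminant $(|\alpha|^2-1)^2+4>0$, hence both roots are real and $t\in\R$ follows immediately. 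This is exactly the paper's one-line argument (``In particular $t$ is real''). Once $t$ is real, $\bar\alpha\beta=\bar\beta\alpha$ and your equation $\beta(\beta-\alpha)=\alpha^2(1-\bar\alpha\beta)$ rewrites as $\varphi_\beta(\alpha)=\alpha^2/\beta$.
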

\begin{proof}

Suppose $\phi = z^2 \varphi_\alpha^2 \varphi_\beta, \alpha \neq \beta, \alpha\beta \neq 0$, by Lemma \ref{farffud},
\begin{align*}
f(w,z) &= (w-z)[d_0(z)w^4 + d_1(z) w^3 + d_2(z) w^2 + d_3(z) w + d_4(z)]\\ & = (w-z) \Big(- (1- \overline{\alpha}z)^2 (1- \overline{\beta}z) w^4 + (1- \overline{\alpha}z)^2 (1- \overline{\beta}z) [(2\alpha + \beta) -z] w^3\\
&\hspace{0.2cm} + \{ (1- \overline{\alpha}z)^2 (1- \overline{\beta}z)[-(2\alpha\beta + \alpha^2) + (2\alpha + \beta) z - z^2] + \overline{\alpha}^2 \overline{\beta} z^2 (\alpha-z)^2 (\beta - z)\}w^2\\
&\hspace{0.2cm} + (\alpha-z)^2 (\beta - z) [1- (2\overline{\alpha}+\overline{\beta})z]w + z (\alpha-z)^2 (\beta - z) \Big).
\end{align*}

If the partition for $\phi$ is $\{\{0\}, \{1, 4\}, \{2, 3\}\}$, then by Theorem \ref{ccrsrsbd}, there are two irreducible polynomials $p_1(w,z)$ and $p_2(w,z)$ such that $f(w,z) = (w-z)p_1(w,z) p_2(w,z)$. Suppose $p_1(w,z) = a_0(z) w^2 + a_1(z) w + a_2(z), p_2(w,z) = b_0(z) w^2 + b_1(z) w + b_2(z)$. Note that $\{\rho_1, \rho_4\}$ and $\{\rho_2, \rho_3\}$ form two components of $S_\phi = S_f$, and the positions for $w$ and $z$ are symmetric, we have $a_0(z), b_0(z)$ are not $0$, and $\text{deg} a_i(z) \leq 2, \text{deg} b_i(z) \leq 2$.

Multiplying $p_1(w,z)$ with $p_2(w,z)$ and comparing the coefficients of $w^j$ with $\frac{f(w,z)}{w-z}$, $j = 0, 1, 2, 3, 4$, we obtain
\begin{align}
&a_0(z) b_0(z) = - (1- \overline{\alpha}z)^2 (1- \overline{\beta}z), \label{aobo}\\
&a_0(z) b_1(z) + a_1(z) b_0(z) = (1- \overline{\alpha}z)^2 (1- \overline{\beta}z) [(2\alpha + \beta) -z], \label{aob1}\\
&a_0(z) b_2(z) + a_1(z) b_1(z) + a_2(z) b_0(z)= (1- \overline{\alpha}z)^2 (1- \overline{\beta}z) \label{aob2}\\
&\hspace{0.5cm} [-(2\alpha\beta + \alpha^2) + (2\alpha + \beta) z - z^2] + \overline{\alpha}^2 \overline{\beta} z^2 (\alpha-z)^2 (\beta - z), \notag\\
&a_1(z) b_2(z) + a_2(z) b_1(z) =  (\alpha-z)^2 (\beta - z) [1- (2\overline{\alpha}+\overline{\beta})z], \label{a1b2}\\
& a_2(z) b_2(z) = z (\alpha-z)^2 (\beta - z). \label{a2b2}
\end{align}
Note that $\{\rho_1(z) \rho_4(z), \rho_2(z) \rho_3(z)\} = \{\frac{a_2(z)}{a_0(z)}, \frac{b_2(z)}{b_0(z)}\}$, and $\rho_1(z) \rho_4(z) \neq \rho_2(z) \rho_3(z)$. By Lemma \ref{psptwpds}, we have $\{\rho_1(\alpha), \rho_4(\alpha)\} = \{0, \beta\}, \{\rho_2(\alpha), \rho_3(\alpha)\} = \{0, \alpha\}$, thus $a_2(\alpha) = b_2(\alpha) = 0$, and so $\alpha - z$ is a factor of $a_2(z)$ and $b_2(z)$. There are essentially two cases.

Case I: $a_0(z) = (1- \overline{\alpha}z)^2$, $b_0(z) = \overline{\beta}z - 1$. Then by (\ref{aob1}), $(1 - \overline{\alpha}z)^2$ is a factor of $a_1(z)$, $\overline{\beta}z - 1$ is a factor of $b_1(z)$. Thus $a_1(z) = c_1 (1 - \overline{\alpha}z)^2$.

(a) If $a_2(z) = c_2 (\alpha - z)^2, b_2(z) = \frac{1}{c_2} (\beta - z)z$, this contradicts the fact that $\alpha - z$ is a factor of $a_2(z)$ and $b_2(z)$.

(b) If $a_2(z) = c_2 (\beta - z)z, b_2(z) = \frac{1}{c_2} (\alpha - z)^2$, similarly, this is also a contradiction.

(c) If $a_2(z) = c_2 (\alpha - z)(\beta - z), b_2(z) = \frac{1}{c_2} (\alpha - z)z$, then by (\ref{a1b2}), $\beta - z$ is a factor of $a_1(z)$. It follows that $c_1 = 0, a_1(z) = 0$, then we can easily derive a contradiction.

(d) If $a_2(z) = c_2 (\alpha - z)z, b_2(z) = \frac{1}{c_2} (\alpha - z)(\beta - z)$, then by (\ref{a1b2}), $\beta - z$ is a factor of $b_1(z)$, therefore $b_1(z) = c_3 (\overline{\beta}z - 1) (\beta - z)$. Since $\{\rho_1(0), \rho_4(0)\} = \{0, \alpha\}, \{\rho_2(0), \rho_3(0)\} = \{\alpha, \beta\}$, we have
$$\rho_1(z) \rho_4(z) = \frac{a_2(z)}{a_0(z)} = \frac{c_2 z}{1- \overline{\alpha}z} \varphi_\alpha(z),$$
$$\rho_1(z) + \rho_4(z) = - \frac{a_1(z)}{a_0(z)} = - c_1.$$
But by Lemma \ref{psptwpds}, $\rho_1(z) + \rho_4(z)$ is not a constant, we get a contradiction.

Case II: $a_0(z) = 1- \overline{\alpha}z$, $b_0(z) = (1- \overline{\alpha}z) (\overline{\beta}z - 1)$. Then by (\ref{aob1}), $\overline{\beta}z - 1$ is a factor of $b_1(z)$.

(e) If $a_2(z) = c_2 (\beta - z)z, b_2(z) = \frac{1}{c_2} (\alpha - z)^2$, then by (\ref{a1b2}), $(\alpha - z)^2$ is a factor of $b_1(z)$. It follows that $b_1(z) = 0$, then we can easily derive a contradiction.

(f) If $a_2(z) = c_2 (\alpha - z)^2, b_2(z) = \frac{1}{c_2} (\beta - z)z$, this contradicts the fact that $\alpha - z$ is a factor of $a_2(z)$ and $b_2(z)$.

(g) If $a_2(z) = c_2 (\alpha - z)(\beta - z), b_2(z) = \frac{1}{c_2} (\alpha - z)z$, then by (\ref{a1b2}), $\beta - z$ is a factor of $a_1(z)$. Suppose $a_1(z) = (\beta - z) (\alpha_1 z + \beta_1), b_1(z) = (\overline{\beta}z - 1) (\alpha_2 z + \beta_2)$, where $\alpha_i, \beta_i$ are constants. Since $\{\rho_1(0), \rho_4(0)\} = \{0, \alpha\}, \{\rho_2(0), \rho_3(0)\}= \{\alpha, \beta\}$, we have
$$
\begin{cases}
\rho_1(z) \rho_4(z) = \frac{b_2(z)}{b_0(z)} = \frac{1}{c_2}\frac{z}{\overline{\beta}z - 1} \varphi_\alpha(z),\\
\rho_1(z) + \rho_4(z) = - \frac{b_1(z)}{b_0(z)} = \frac{- 1}{1- \overline{\alpha}z} (\alpha_2 z + \beta_2),\\
\rho_2(z) \rho_3(z) = \frac{a_2(z)}{a_0(z)} = c_2 \varphi_\alpha(z) (\beta - z),\\
\rho_2(z) + \rho_3(z) = - \frac{a_1(z)}{a_0(z)} = \frac{- 1}{1- \overline{\alpha}z} (\beta - z) (\alpha_1 z + \beta_1).\\
\end{cases}
$$
Thus by Lemma \ref{psptwpds}, we get
\begin{align}
\rho_{1}(0)+\rho_{4}(0) &  =\alpha=-\beta_{2},\nonumber\\
\rho_{1}(\alpha)+\rho_{4}(\alpha) &  =\beta=\frac{-1}{1-|\alpha|^{2}}%
(\alpha_{2}\alpha+\beta_{2}),\label{a2b2}\\
\rho_{2}(0)\rho_{3}(0) &  =\alpha\beta=c_{2}\alpha\beta,\quad\rho_{2}%
(0)+\rho_{3}(0)=\alpha+\beta=-\beta\beta_{1},\nonumber
\end{align}
Therefore
\[
\beta_{2}=-\alpha,\quad c_{2}=1,\quad\beta_{1}=-\frac{\alpha+\beta}{\beta},
\]
Suppose $\alpha=t\beta$, then $\beta_{1}=-1-t$.

By (\ref{aob1}): $a_{0}(z)b_{1}(z)+a_{1}(z)b_{0}(z)=(1-\overline{\alpha}%
z)^{2}(1-\overline{\beta}z)[(2\alpha+\beta)-z]$, we have
\[
(\alpha_{2}z+\beta_{2})+(\beta-z)(\alpha_{1}z+\beta_{1})=(1-\overline{\alpha
}z)[z-(2\alpha+\beta)].
\]
Comparing the coefficients of $z^{j},j=1,2$, we get
\begin{equation}
\alpha_{1}=\overline{\alpha},\quad\alpha_{2}=2|\alpha|^{2}+1+\beta
_{1}=2|\alpha|^{2}-t\label{a1a21b}%
\end{equation}
Plugging $\beta_{2}=-\alpha$ and $\alpha_{2}=2|\alpha|^{2}-t$ into
(\ref{a2b2}), we have%
\[
-\left(  1-|\alpha|^{2}\right)  \beta=\left(  2|\alpha|^{2}-t\right)
\alpha-\alpha=\left(  2|\alpha|^{2}-1-t\right)  \alpha.
\]
Equivalently for $t=\alpha/\beta$
\begin{equation}
-\left(  1-|\alpha|^{2}\right)  =\left(  2|\alpha|^{2}-1-t\right)
t.\label{t1}%
\end{equation}
In particular, $t$ is real.

By (\ref{a1b2}): $a_{1}(z)b_{2}(z)+a_{2}(z)b_{1}(z)=(\alpha-z)^{2}%
(\beta-z)[1-(2\overline{\alpha}+\overline{\beta})z]$, we obtain
\[
(\alpha_{1}z+\beta_{1})\frac{z}{c_{2}}+c_{2}(\overline{\beta}z-1)(\alpha
_{2}z+\beta_{2})=(\alpha-z)[1-(2\overline{\alpha}+\overline{\beta})z].
\]
Comparing the coefficients of $z^{2},$ we get
\begin{equation}
\frac{\alpha_{1}}{c_{2}}+c_{2}\alpha_{2}\overline{\beta}=2\overline{\alpha
}+\overline{\beta};\label{rba1c2a2b}%
\end{equation}
Recall that $t=\overline{t}=\overline{\alpha}/\overline{\beta},$ $c_{2}=1$,
$\alpha_{1}=\overline{\alpha}$, $\alpha_{2}=2|\alpha|^{2}-t,$ the above
equation becomes
\begin{equation}
\overline{\alpha}+(2|\alpha|^{2}-t)\overline{\beta} =2\overline{\alpha
}+\overline{\beta}\text{ }\ \
\text{or }\ \ (2|\alpha|^{2}-t)   =t+1\label{t2}.%
\end{equation}
Thus $2|\alpha|^{2} - 1 = 2t$. It then follows from (\ref{t1}) that $- (1- |\alpha|^2) = t^2$ - this is a contradiction. This contradiction shows that $a_2(z) = c_2 (\alpha - z)(\beta - z)$ is impossible.

(h) If $a_2(z) = c_2 (\alpha - z)z, b_2(z) = \frac{1}{c_2} (\alpha - z)(\beta - z)$, then by (\ref{a1b2}), $\beta - z$ is a factor of $b_1(z)$. Recall that by (\ref{aob1}), $\overline{\beta}z - 1$ is also a factor of $b_1(z)$, we have $b_1(z) = c_1 (\overline{\beta}z - 1) (\beta - z)$. Since $\{\rho_2(\beta), \rho_3(\beta)\} = \{0, 0\}$, we obtain
$$
\begin{cases}
\rho_2(z) \rho_3(z) = \frac{b_2(z)}{b_0(z)} = \frac{-1}{c_2}\varphi_\alpha(z)\varphi_\beta(z);\\
\rho_2(z) + \rho_3(z) = - \frac{b_1(z)}{b_0(z)} = \frac{- c_1(\beta-z)}{1- \overline{\alpha}z};\\
\rho_1(z) \rho_4(z) = \frac{a_2(z)}{a_0(z)} = c_2 z\varphi_\alpha(z);\\
\rho_1(z) + \rho_4(z) = - \frac{a_1(z)}{a_0(z)}.
\end{cases}
$$
By (\ref{aob1}): $a_0(z) b_1(z) + a_1(z) b_0(z) = (1- \overline{\alpha}z)^2 (1- \overline{\beta}z) [(2\alpha + \beta) -z]$, we get
$$a_1(z) =  [z - (2\alpha + \beta)] (1 - \overline{\alpha}z) - c_1(\beta-z),$$
Thus $\rho_1(z) + \rho_4(z) = - \frac{a_1(z)}{a_0(z)} = (2 \alpha + \beta - z) + \frac{c_1 (\beta - z)}{1- \overline{\alpha}z}$. Then by Lemma \ref{psptwpds}, we have
\begin{align}
\rho_{2}(0)\rho_{3}(0) &  =\alpha\beta=\frac{-\alpha\beta}{c_{2}},\quad
\rho_{2}(0)+\rho_{3}(0)=\alpha+\beta=-c_{1}\beta,\label{eqfdc2c1}\\
\rho_{2}(\alpha)+\rho_{3}(\alpha) &  =\alpha=\frac{-c_{1}(\beta-\alpha
)}{1-|\alpha|^{2}},\label{eqfdc2c22}\\
\rho_{1}(\beta)\rho_{4}(\beta) &  =\alpha^{2}=c_{2}\beta\varphi_{\alpha}%
(\beta),\quad\rho_{1}(\beta)+\rho_{4}(\beta)=2\alpha.\label{deqfac2ba}%
\end{align}
Suppose $t=\alpha/\beta$, then by (\ref{eqfdc2c1}),
\[
c_{2}=-1,\quad c_{1}=-\frac{\alpha+\beta}{\beta}=-1-t.
\]
Now equation (\ref{eqfdc2c22}) becomes%
\begin{align*}
\alpha\left(  1-|\alpha|^{2}\right)   & =(1+t)(\beta-\alpha),\\
\text{or }t\left(  1-|\alpha|^{2}\right)   & =(1+t)(1-t).
\end{align*}
In particular $t$ is real. By (\ref{deqfac2ba}) and $c_{2}=-1$, we get
\[
\varphi_{\alpha}(\beta)=-\frac{\alpha^{2}}{\beta}=-t\alpha.
\]
Recall that
$$
\begin{cases}
p_1(w,z) = a_0(z) w^2 + a_1(z) w + a_2(z);\\
p_2(w,z) = b_0(z) w^2 + b_1(z) w + b_2(z).
\end{cases}
$$
Then
$$
\begin{cases}
p_1(w,z) = (1 - \overline{\alpha}z) w^2 + \Big([z - (2\alpha + \beta)] (1 - \overline{\alpha}z) - c_1(\beta-z)\Big) w - z (\alpha - z);\\
p_2(w,z) = (1 - \overline{\alpha}z)(\overline{\beta}z - 1) w^2 + c_1 (\overline{\beta}z - 1) (\beta - z)w - (\alpha - z) (\beta - z),
\end{cases}
$$
where $c_1 = - 1 -t = -1 - \alpha/\beta$. If $t = \frac{\alpha}{\beta}$ is real, and $\varphi_\beta(\alpha) = t\alpha$, it is then straightforward to verify that
$$f(w,z) = (w - z) p_1(w,z) p_2(w,z).$$

Conversely, if $t = \frac{\alpha}{\beta}$ is real and $\varphi_\beta(\alpha) = t\alpha$, then we have shown that there are irreducible polynomials $p_1(w,z)$ and $p_2(w,z)$ such that
$$
f(w,z) = (w - z) p_1(w,z) p_2(w,z),
$$
and hence the partition for $\phi$ is $\{\{0\}, \{1, 4\}, \{2, 3\}\}$.
\end{proof}

By Propositions \ref{pezfpa}, \ref{ptwpztab}, \ref{ptfpezsabg} and \ref{ptwpstrs}, we obtain the following theorem.
\begin{theorem}\label{urstdpopf}
Let $\phi$ be a finite Blaschke product of order $5$. Then one of the following holds:
\begin{itemize}

\item[(a)] If $\phi$ is equivalent to $\varphi_\alpha^5$ for $\alpha \in \D$, then the partition is $\{\{0\}, \{1\}, \{2\}, \{3\}, \{4\}\}$;
\item[(b)] If $\phi$ is equivalent to $(z^2\varphi_\alpha^2\varphi_\beta) \circ \varphi_\gamma$ for $\alpha, \beta \in \D \backslash\{0\}, \gamma \in \D, \alpha/\beta \in \R, \varphi_\beta(\alpha) = \frac{\alpha^2}{\beta}$, then the partition is $\{\{0\}, \{1, 4\}, \{2, 3\}\}$;
\item[(c)] If $\phi$ is not equivalent to any of the functions in {\rm(a)} and {\rm(b)}, then the partition is $\{\{0\}, \{1, 2, 3, 4\}\}$.
\end{itemize}
\end{theorem}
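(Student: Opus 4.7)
The plan is to combine two operations that preserve the partition $\{G_1,\dots,G_q\}$ of local inverses: equivalence $\phi\mapsto\varphi_w\circ\phi$ (pre-composition by a M\"obius transform, which realizes $M_{\varphi_w\circ\phi}$ as a functional calculus of $M_\phi$), and conjugation by the unitary $U_\gamma$ introduced in Subsection 2.2 (which implements $M_\phi\mapsto M_{\phi\circ\varphi_\gamma}$). Both operations leave the lattice of reducing subspaces, and hence the partition, intact, so the strategy is to reduce every degree-$5$ Blaschke product to one of a short list of canonical forms and then apply the propositions of Subsection 2.2.

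First I would dispatch case (a). The partition $\{\{0\},\{1\},\{2\},\{3\},\{4\}\}$ corresponds to $q=n=5$, and it was already observed in Subsection 2.1 that this forces each local inverse to be a M\"obius self-map of $\D$; the group-like composition rule then pins $\phi$ down to $\varphi_\alpha^5$ up to equivalence. Conversely, for $\phi=\varphi_\alpha^5$ the inverses $\rho_j(z)=\varphi_\alpha(\omega^j\varphi_\alpha(z))$ are all M\"obius transforms, each one an equivalence class on its own, producing the discrete partition.

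Now assume $\phi$ is not equivalent to any $\varphi_\alpha^5$. By Bochner's theorem $\phi$ has $n-1=4$ critical points in $\D$; pick one, say $\alpha$, with critical value $\lambda=\phi(\alpha)$. Then $\varphi_\lambda\circ\phi$ vanishes at $\alpha$ with multiplicity at least $2$, and conjugating by $U_\alpha$ replaces it by a Blaschke product vanishing at $0$ to order at least $2$. Hence, up to the two invariance operations, I may assume
\[
\phi = z^2\,\varphi_{\alpha_1}\varphi_{\alpha_2}\varphi_{\alpha_3},\qquad \alpha_1,\alpha_2,\alpha_3\in\D.
\]
A case split on how many $\alpha_j$ vanish and which of the remaining ones coincide---together with the unitary equivalences of $M_{z^4\varphi_\alpha}$ with $M_{z\varphi_\alpha^4}$ and of $M_{z^2\varphi_\alpha^3}$ with $M_{z^3\varphi_\alpha^2}$ recorded in Subsection 2.2---reduces to the forms $z^4\varphi_\alpha$, $z^3\varphi_\alpha\varphi_\beta$, $z^2\varphi_\alpha\varphi_\beta\varphi_\gamma$ with $\alpha,\beta,\gamma$ distinct and nonzero, $z^2\varphi_\alpha^2\varphi_\beta$ with $\alpha\neq\beta$ and $\alpha\beta\neq 0$, and $z^2\varphi_\alpha^3$. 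Propositions \ref{pezfpa}, \ref{ptwpztab} and \ref{ptfpezsabg} give partition $\{\{0\},\{1,2,3,4\}\}$ in every case except $z^2\varphi_\alpha^2\varphi_\beta$ with $\alpha\neq\beta$ and $\alpha\beta\neq 0$; for that last form, Proposition \ref{ptwpstrs} decides the issue, yielding $\{\{0\},\{1,4\},\{2,3\}\}$ iff $\alpha/\beta\in\R$ and $\varphi_\beta(\alpha)=\alpha^2/\beta$, and $\{\{0\},\{1,2,3,4\}\}$ otherwise. Restoring the parameter $\gamma$ lost in the $U_\alpha$ reduction puts case (b) into the stated form $(z^2\varphi_\alpha^2\varphi_\beta)\circ\varphi_\gamma$; everything outside (a) and (b) falls in (c).

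The analytic and algebraic substance lives entirely in the four cited propositions, with Proposition \ref{ptwpstrs} being the delicate one because of its polynomial-factorization argument. The remaining work for this theorem is purely organizational: verifying that every degenerate zero pattern of a degree-$5$ Blaschke product has been matched with one of the canonical forms, and tracking the outer and inner M\"obius parameters through the two reductions.
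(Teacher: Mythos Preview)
Your proposal is correct and follows essentially the same route as the paper: the paper's proof is a one-line citation of Propositions \ref{pezfpa}, \ref{ptwpztab}, \ref{ptfpezsabg} and \ref{ptwpstrs}, and you have correctly spelled out the reduction-to-canonical-forms argument (via equivalence and the unitary $U_\gamma$) that the paper sets up in Subsection~2.2 immediately before those propositions. The only cosmetic difference is that the paper lists the canonical forms as $z^4\varphi_\alpha$, $z^3\varphi_\alpha\varphi_\beta$, $z^2\varphi_\alpha\varphi_\beta\varphi_\gamma$ with nonzero parameters, whereas you further separate the $z^2$ case by coincidence pattern; the content is identical.
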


Remarkably, the hypothesis of Theorem \ref{urstdpopf} (a) is natural. In fact, according to the definiton of the equivalence of two finite Blaschke products, if $\alpha\neq 0$, then
$$\phi_\alpha^{-1}(\phi_\alpha^5) \neq a z^5\ \ \&\ \  \phi_\alpha^{-1}(z^5) \neq a \phi_\alpha^5,
$$
and hence $\phi_\alpha^5$ is not equivalent to $z^5$. Meanwhile, it is worth mentioning here that both $M_{\phi_\alpha^5}$ and $M_{z^5}$ on $B^2$ are unitarily equivalent and so they have the same reducibility on $B^2$ - but on $D^2$ both $M_{\phi_\alpha^5}$ and $M_{z^5}$ are not unitarily equivalent when $\alpha \neq 0$; see Corollary \ref{irddpan}.

On the other hand, when the order $n$ of $\phi$ is prime, it is unknown whether the number $q$ of the connected components of the Riemann surface $S_\phi$ can be different from 2 and $n$ (\cite{DPW12}), and so Theorem \ref{urstdpopf} answers this question when $n$ is $5$. Accordingly, we think that $q$ can also be different from $2$ and $n$ (a prime greater than $5$).

\section{Reducible $M_\phi$ on $D^2$ with $\phi$ being of order $5$}\label{s3}

Now we are ready to discuss the reducibility of $M_\phi$ on $D^2$ when $\phi$ is a finite Blaschke product of order $5$.

\begin{theorem}\label{rdopo5ubp1}
Let $\phi$ be a finite Blaschke product of order $5$. Then $M_\phi$ is reducible on $D^2$ if and only if $\phi$ is equivalent to $z^5$.
\end{theorem}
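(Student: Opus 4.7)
If $\phi=a\varphi_{w}\circ z^{5}$ with $|a|=1$ and $w\in\D$, then $M_{\phi}=a\,\varphi_{w}(M_{z^{5}})$ is a bijective holomorphic (M\"{o}bius) function of $M_{z^{5}}$, so $\{M_{\phi}\}''=\{M_{z^{5}}\}''$ and the two operators share reducing subspaces. Since $D^{2}=\bigoplus_{j=0}^{4}\overline{\mathrm{span}}\{z^{5k+j}:k\geq 0\}$ exhibits five nontrivial reducing subspaces for $M_{z^{5}}$, $M_{\phi}$ is reducible.

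\textbf{Necessity.} Assume $M_{\phi}$ is reducible on $D^{2}$ and split according to Theorem~\ref{urstdpopf}. In case (a), $\phi\sim\varphi_{\alpha}^{5}$; the same functional-calculus argument reduces reducibility of $M_{\phi}$ to that of $M_{\varphi_{\alpha}^{5}}$ on $D^{2}$, and Corollary~\ref{irddpan} forces $\alpha=0$, so $\phi\sim z^{5}$. For cases (b) and (c) I plan to use Proposition~\ref{vaiddatlms} to identify $\dim\widetilde{\HA_{\phi}}$ with $\dim\HL$, and to derive the conclusion $\dim\HL=1$, which contradicts reducibility. The key structural observation is that $\rho_{0}(z)=z$ makes $F_{1}(0)$ and $H_{1}(0)$ identically zero as functionals on $D^{2}$, and $G_{i}^{-1}=G_{i}$ for every $i$ in both partitions $\{\{0\},\{1,2,3,4\}\}$ and $\{\{0\},\{1,4\},\{2,3\}\}$, so the $H$-constraint merely restates the $F$-constraint. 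Thus $\dim\HL=1$ reduces to linear independence of the functionals $F_{2}(0),\ldots,F_{q}(0)$ on $D^{2}$.

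\textbf{The two partition cases.} In case (c), $F_{2}(0)(f)=\sum_{i=1}^{4}\rho_{i}(0)\,f(\rho_{i}(0))$ vanishes identically in $f$ iff every $\rho_{i}(0)=0$ (since point evaluations at distinct points are linearly independent on $D^{2}$); combined with $\rho_{0}(0)=0$ this would collapse $\phi^{-1}(\phi(0))$ to a quintuple zero, forcing $\varphi_{\phi(0)}\circ\phi=c z^{5}$ for some unimodular $c$, hence $\phi\sim z^{5}$, contradicting Theorem~\ref{urstdpopf}(c). So $F_{2}(0)\neq 0$, $a_{2}=0$, and $\dim\HL=1$. Case (b) is the real obstacle: by equivalence we may take $\phi=(z^{2}\varphi_{\alpha}^{2}\varphi_{\beta})\circ\varphi_{\gamma}$ with $\alpha/\beta\in\R$ and $\varphi_{\beta}(\alpha)=\alpha^{2}/\beta$, and must verify that $F_{2}(0),F_{3}(0)$ are linearly independent as functionals on $D^{2}$. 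My plan is to compute $\rho_{i}^{\phi}(0)=\varphi_{\gamma}(\tilde\rho_{i}(\gamma))$ by reading the elementary symmetric functions $\tilde\rho_{1}(\gamma)+\tilde\rho_{4}(\gamma)$ and $\tilde\rho_{1}(\gamma)\tilde\rho_{4}(\gamma)$ (and the corresponding $G_{3}$ counterparts) off as ratios $-a_{1}(\gamma)/a_{0}(\gamma)$ and $a_{2}(\gamma)/a_{0}(\gamma)$ of the explicit polynomials $p_{1}(w,z),p_{2}(w,z)$ produced in the proof of Proposition~\ref{ptwpstrs}, then test the pair of functionals against $f\equiv 1$ and $f(z)=z$. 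At $\gamma=0$ Lemma~\ref{psptwpds} already yields the $2\times 2$ determinant $\alpha\beta(\beta-\alpha)\neq 0$; for general $\gamma$ the M\"{o}bius transformation $\varphi_{\gamma}$ contributes only nonvanishing common denominators, and the standing hypotheses on $(\alpha,\beta)$ --- the very identities that produced the partition $\{\{0\},\{1,4\},\{2,3\}\}$ in Proposition~\ref{ptwpstrs} --- keep the numerator nonzero. The anticipated main difficulty is keeping the bookkeeping of these rational identities tractable; once handled, linear independence of $F_{2}(0),F_{3}(0)$ forces $a_{2}=a_{3}=0$, so $\dim\HL=1$ and $M_{\phi}$ is irreducible, completing the contrapositive.
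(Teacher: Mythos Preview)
Your organization---splitting by the partition type via Theorem~\ref{urstdpopf} rather than by the zero structure of $\phi$---is cleaner than the paper's and your treatment of the $q=2$ partition via $\dim\HL$ is a legitimate alternative to the paper's citation of \cite[Theorem~2.5]{Luo}. However, your case~(b) (partition $\{\{0\},\{1,4\},\{2,3\}\}$) is not a proof but a sketch: you reduce to $\phi=(z^{2}\varphi_{\alpha}^{2}\varphi_{\beta})\circ\varphi_{\gamma}$, verify the $\gamma=0$ instance, and then assert that for general $\gamma$ ``the M\"obius transformation $\varphi_{\gamma}$ contributes only nonvanishing common denominators'' and ``the standing hypotheses on $(\alpha,\beta)$ \ldots\ keep the numerator nonzero.'' Neither claim is established, and you yourself flag the bookkeeping as the anticipated main difficulty. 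The values $\tilde\rho_{i}(\gamma)$ for $\gamma\neq 0$ are governed by the roots of $p_{1}(w,\gamma),p_{2}(w,\gamma)$ from Proposition~\ref{ptwpstrs}, and checking that the resulting $2\times 2$ determinant in your test functionals is nonzero for \emph{every} admissible $\gamma$ is a genuine computation that has not been carried out.

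The paper avoids this obstacle entirely. Rather than parametrizing by $\gamma$, it normalizes only by $\phi(0)=0$ and treats the two zero-patterns that can produce the partition $\{\{0\},\{1,4\},\{2,3\}\}$ separately: for $\phi=z^{2}\varphi_{\alpha}^{2}\varphi_{\beta}$ (your $\gamma=0$) it uses Lemma~\ref{psptwpds} and the single test function $f=\varphi_{\alpha}$; for $\phi=z\varphi_{\alpha}\varphi_{\beta}\varphi_{\gamma}\varphi_{\delta}$ with all zeros nonzero it first forces $\{\rho_{1}(0),\rho_{4}(0)\}=\{\rho_{2}(0),\rho_{3}(0)\}$ via test functions, then exploits the group-like composition law on $\Omega$---forming $f_{1}=\rho_{1}\rho_{4}$ and evaluating $f_{1}(\rho_{1})+f_{1}(\rho_{4})$ and $f_{1}(\rho_{1})f_{1}(\rho_{4})$ at $0$ and at $\alpha$---to derive a contradiction with no polynomial factorization at all. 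If you want to salvage your parametrized approach you must actually execute the determinant computation for arbitrary $\gamma$; otherwise, the paper's composition trick for case~(g) is the missing ingredient.
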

\begin{proof}
Recall that the possible partitions are
$$
\{\{0\},
\{1\}, \{2\}, \{3\}, \{4\}\};\ \{\{0\}, \{1, 2, 3, 4\}\};\ \{\{0\}, \{1, 4\}, \{2, 3\}\}.
$$
By Theorem \ref{urstdpopf}, these three partitions can happen. Recall also that if the partition is $\{\{0\}, \{1, 2, 3, 4\}\}$, then $M_\phi$ has exact two minimal reducing subspaces on $B^2$: $M_0(\phi); M_0(\phi)^\perp$, where $M_0(\phi) = \text{span}\{\phi'\phi^j: j \geq 0\}$. It follows from Theorem 2.5 \cite{Luo} that $M_\phi$ is irreducible on $D^2$ in this case. Since equivalent Blaschke products have the same reducing subspaces, we can always assume that $\phi(0) = 0$. We have the following cases.

(a) $\phi = z^5$. Then $M_\phi$ has exact $5$ minimal reducing subspaces on $D^2$: $N_j = \text{span} \{z^l: l \equiv j ~\text{mod}~ 5\}, j = 0, 1, \cdots, 4$ (\cite{SZ02}).

(b) $\phi = z^4 \varphi_\alpha, \alpha \neq 0$. By Proposition \ref{pezfpa}, the partition is $\{\{0\}, \{1, 2, 3, 4\}\}$, thus $M_\phi$ is irreducible on $D^2$ in this case.

(c) $\phi = z^3 \varphi_\alpha \varphi_\beta, \alpha\beta \neq 0$. By Proposition \ref{ptwpztab}, the partition is $\{\{0\}, \{1, 2, 3, 4\}\}$, hence $M_\phi$ is irreducible on $D^2$ in this case.

(d) $\phi = z^2 \varphi_\alpha^3, \alpha \neq 0$. Note that $M_{z^{2}\varphi_{\alpha}^{3}}$ is unitarily equivalent to $M_{z^{3}\varphi_{\alpha}^{2}}$. By Proposition \ref{ptwpztab}, $M_{z^{3}\varphi_{\alpha}^{2}}$ has exact two minimal reducing subspaces on $B^{2}$, so $M_{z^{2}\varphi_{\alpha}^{3}}$ has exact two minimal reducing subspaces on $B^{2}$. Therefore by Theorem 2.5 \cite{Luo}, $M_{z^{2}\varphi_{\alpha}^{3}}$ is irreducible on $D^{2}$.

(e) $\phi = z^2 \varphi_\alpha \varphi_\beta \varphi_\gamma, \alpha, \beta, \gamma$ are mutually distinct and $\alpha\beta\gamma \neq 0$. By Proposition \ref{ptfpezsabg}, the partition is $\{\{0\}, \{1, 2, 3, 4\}\}$, so $M_\phi$ is irreducible on $D^2$ in this case.

(f) $\phi = z^2 \varphi_\alpha^2 \varphi_\beta, \alpha\beta \neq 0$. If the partition is $\{\{0\}, \{1, 2, 3, 4\}\}$, then $M_\phi$ is irreducible on $D^2$ in this case. If the partition is $\{\{0\}, \{1, 4\}, \{2, 3\}\}$, we show that $\dim \widetilde{\HA_\phi} = 1$. Suppose $\dim \widetilde{\HA_\phi} \neq 1$. Recall that
$$\HL = \text{span}\left\{(a_1, \cdots, a_q): f \in D^2, \sum_{i=1}^q a_i F_i(0) = 0, \sum_{i=1}^q \overline{a_i} H_i(0) = 0\right\},$$
where
$$
\begin{cases}
F_i(z) = \sum\limits_{\rho \in G_i} f(\rho(z)) \rho(z);\\
H_i(z) = \sum\limits_{\rho \in G_i^{-1}} f(\rho(z)) \rho(z).
\end{cases}
$$
By Proposition \ref{vaiddatlms}, $\dim \HL \neq 1$. Since $\rho_0(0) = 0$, it follows that for $f \in D^2$, the equation
\begin{equation}\label{rcdidugovb}
\sum_{i=1}^3 a_i F_i(0)= a_2 [f(\rho_1(0)) \rho_1(0) + f(\rho_4(0))\rho_4(0)]+ a_3 [f(\rho_2(0)) \rho_2(0) + f(\rho_3(0))\rho_3(0)]= 0
\end{equation}
has a nontrivial solution $(a_2, a_3)$. By Lemma \ref{psptwpds}, we have
$$
\begin{cases}
\{\rho_1(0), \rho_4(0)\} = \{0, \alpha\};\\
\{\rho_2(0), \rho_3(0)\} = \{\alpha, \beta\},
\end{cases}
$$
whence finding that equation (\ref{rcdidugovb}) becomes
$$a_2[f(\alpha)\alpha] + a_3[f(\alpha)\alpha + f(\beta)\beta] = 0.$$
Upon choosing $f(z) = \varphi_\alpha(z)$, we get
$a_3 \varphi_\alpha(\beta)\beta = 0$, whence reaching $a_3 = 0$, $a_2 = 0$, and $(a_2, a_3) = (0, 0)$ - this is a contradiction. Therefore $\dim \widetilde{\HA_\phi} = 1$. In either case, we have $\dim \widetilde{\HA_\phi} = 1$, and $M_\phi$ is irreducible on $D^2$.

(g) $\phi = z\varphi_\alpha\varphi_\beta\varphi_\gamma\varphi_\delta,\ \ \alpha\beta\gamma\delta \neq 0$.

If the partition is $\{\{0\}, \{1\}, \{2\}, \{3\}, \{4\}\}$, then each $\rho_i$ is equivalent to itself, thus each $\rho_i$ extends analytic to the unit disc and has modulus 1 on the unit circle, so each $\rho_i$ is a M\"{o}bius transform. Hence $\phi$ is equivalent to $\varphi_b^5, b \in \D \backslash \{0\}$. By Corollary \ref{irddpan}, $M_\phi$ is irreducible on $D^2$.

If the partition is $\{\{0\}, \{1, 2, 3, 4\}\}$, then $M_\phi$ is irreducible on $D^2$ in this case.

If the partition is $\{\{0\}, \{1, 4\}, \{2, 3\}\}$, then we show $\dim \widetilde{\HA_\phi} = 1$. If $\dim \widetilde{\HA_\phi} \neq 1$, then suppose
$$
\begin{cases}
\{\rho_1(0), \rho_4(0)\} = \{\alpha, \beta\};\\
\{\rho_2(0), \rho_3(0)\}= \{\gamma, \delta\},
\end{cases}
$$
as in (f), for $f \in D^2$, the equation
\begin{align}\label{rcpfdugovb}
\sum_{i=1}^3 a_i F_i(0) = a_2 [f(\alpha) \alpha + f(\beta)\beta] + a_3 [f(\gamma) \gamma + f(\delta)\delta]  = 0
\end{align}
has a nontrivial solution $(a_2, a_3)$.

Next, we show that $\{\gamma, \delta\} = \{\alpha, \beta\}$. If $\{\gamma, \delta\} \neq \{\alpha, \beta\}$, then, without loss of generality, suppose $\delta \not\in \{\alpha, \beta\}$. If $\gamma \in \{\alpha, \beta\}$, choosing $f = \varphi_\alpha \varphi_\beta$ in (\ref{rcpfdugovb}) we have $a_3 f(\delta) \delta = 0$, whence getting $a_3 = 0$ and then $a_2 = 0$ - this is a contradiction. Thus $\gamma \not\in \{\alpha, \beta\}$. If $\gamma \neq \delta$, choosing $f = \varphi_\alpha \varphi_\beta \varphi_\gamma$ in (\ref{rcpfdugovb}) we have $a_3 f(\delta) \delta = 0$, whence finding $a_3 = 0$ and then $a_2 = 0$ - this is also a contradiction. If $\gamma = \delta$, then choosing $f = \varphi_\alpha \varphi_\beta$ in (\ref{rcpfdugovb}) gives $a_3 2f(\delta) \delta = 0$. So $a_3 = 0$, and consequently $a_2 = 0$ - again we have a contradiction. Thus, we have
$$\{\gamma, \delta\} = \{\alpha, \beta\}\ \ \&\ \
\{\rho_1(0), \rho_4(0)\} = \{\alpha, \beta\} = \{\rho_2(0), \rho_3(0)\}.
$$
If $\alpha = \beta$, then $\phi = z \varphi_\alpha^4$, and hence by the remark after Proposition \ref{pezfpa}, $M_\phi$ is irreducible on $D^2$. Thus, we only need to consider $\alpha \neq \beta$.

Let
$$
\begin{cases}
f_1(z) = \rho_1(z) \rho_4(z);\\
g(z) = f_1(\rho_1(z)) + f_1(\rho_4(z)) = z \rho_2(z) + z \rho_3(z).
\end{cases}
$$
Then
$$g(0) = f_1(\alpha) + f_1(\beta) = 0.$$
If
$$
g_1(z) = f_1(\rho_1(z)) f_1(\rho_4(z)) = z^2 \rho_2(z) \rho_3(z),
$$
then
$$g_1(0) = f_1(\alpha) f_1(\beta) = 0,
$$
and hence
$$
f_1(\alpha) = f_1(\beta) = 0.
$$
Consequently,
$$\{\rho_1(\alpha), \rho_4(\alpha)\} = \{0, \alpha\}\ \hbox{or}\ \{0, \beta\}.
$$

If
$$
\{\rho_1(\alpha), \rho_4(\alpha)\} = \{0, \alpha\},
$$
then
$$
\{\rho_2(\alpha), \rho_3(\alpha)\} = \{\beta, \beta\},
$$
and hence
$$g(\alpha) = 2\alpha\beta = f_1(0) + f_1(\alpha).$$
Since $f_1(\alpha) = 0, f_1(0) = \alpha\beta$, we obtain $\alpha\beta = 0$, whence reaching a contradiction.

If
$$\{\rho_1(\alpha), \rho_4(\alpha)\}= \{0, \beta\},
$$
then
$$\{\rho_2(\alpha), \rho_3(\alpha)\} = \{\alpha, \beta\},
$$
and hence
$$g(\alpha) = \alpha^2 + \alpha\beta = f_1(0) + f_1(\alpha).$$
Since
$$f_1(\alpha) = 0, f_1(0) = \alpha\beta,
$$
we get $\alpha = 0$ - this is also a contradiction. Therefore, $\dim \widetilde{\HA_\phi} = 1$. In either case, we have $\dim \widetilde{\HA_\phi} = 1$, so $M_\phi$ is irreducible on $D^2$. The proof is complete.
\end{proof}

\section{Reducible $M_\phi$ on $D^2$ with $\phi$ being of order $6$}\label{s4}

In this section, we discuss the reducing subspaces of $M_\phi$ on $D^2$ when the order of $\phi$ is $6$. To do so, we say that a finite Blaschke product $\phi$ of order $n$ is reducible provided that there are two nontrivial finite Blaschke products $\psi_1$ and $\psi_2$ such that $\phi = \psi_1 \circ \psi_2$, and then need the following lemma.

\begin{lemma}[\cite{DPW12}]\label{dpwrarbp}
For a finite Blaschke product $\phi$ of order $n$, $\phi$ is reducible if and only if $G_{k_1} \cup \cdots \cup G_{k_m}$ forms a nontrivial subgroup of $\Z_n$, for some subset $G_{k_1}, \cdots, G_{k_m}$ of the partition arising from $\phi$.
\end{lemma}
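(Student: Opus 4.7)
The plan is to prove both implications separately, treating the forward direction as a straightforward extraction of structure from a given factorization and reserving the heavier lifting for the reverse direction, where I must manufacture a Blaschke factor from purely group-theoretic data.

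For the forward direction, suppose $\phi = \psi_1 \circ \psi_2$ with $1 < \deg\psi_2 = m < n$. I would introduce
$$H = \{i \in \Z_n : \psi_2 \circ \rho_i = \psi_2\}$$
and verify three properties. First, $H$ is a subgroup: using the group-like identity $\rho_i \circ \rho_j = \rho_{i+j \bmod n}$ on $\Omega = u^{-1}(A_s)$, closure under addition is immediate, and $0 \in H$ because $\rho_0 = \mathrm{id}$. Second, $|H| = m$: a generic fiber $\psi_2^{-1}(\psi_2(z))$ has $m$ points, and each point $w$ corresponds to a local inverse $\rho_i \in H$ via $w = \rho_i(z)$. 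Third, $H$ is a union of the $G_k$'s: if $\rho_i \in H$ and $\rho_j$ is analytic continuation of $\rho_i$ along a loop in $\D \setminus E$, then the identity $\psi_2 \circ \rho_i = \psi_2$ holds on an open set, and since $\psi_2$ is single-valued on $\D$, continuing along the loop yields $\psi_2 \circ \rho_j = \psi_2$ on an open set; the identity theorem then promotes this to all of $\D$.

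For the reverse direction, suppose $H = G_{k_1} \cup \cdots \cup G_{k_m}$ is a nontrivial subgroup of $\Z_n$. The task is to produce a finite Blaschke product $\psi_2$ of degree $|H|$ with $\psi_2 \circ \rho = \psi_2$ for all $\rho \in H$; once this is done, the relation $\phi \circ \rho = \phi$ (valid for every local inverse, hence in particular for $\rho \in H$) forces $\phi$ to be constant on the $H$-orbits, so $\phi = \psi_1 \circ \psi_2$ for a well-defined analytic $\psi_1$, which by degree count and the boundary behavior of $\phi$ is itself a Blaschke product of order $n/|H|$. To construct $\psi_2$, I would form the elementary symmetric functions
$$\sigma_j(z) = e_j\bigl(\{\rho(z) : \rho \in H\}\bigr), \qquad j = 1, \ldots, |H|,$$
which are single-valued on $\D \setminus E$ precisely because $H$ is a union of equivalence classes, bounded because each $\rho$ maps into $\overline{\D}$, and hence extend holomorphically to $\D$. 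These generate the $H$-invariant bounded analytic functions, and the resulting quotient map $\D \to \D/H$ is the desired $\psi_2$; its boundary values satisfy $|\psi_2(z)| \to 1$ as $|z| \to 1$ because each $\rho \in H$ extends to a homeomorphism of $\partial\D$, making $\psi_2$ a genuine Blaschke product of degree $|H|$.

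The main obstacle is clearly the reverse direction: verifying that the $H$-invariant quotient really is a disk and that the canonical projection is Blaschke of the right degree, rather than merely an inner function or a branched cover onto some other Riemann surface. An alternative and perhaps cleaner route would use the $n$-th root $u$ with $\phi = u^n$ from Lemma 2.1 of \cite{DPW12}: since $H$ is a subgroup of $\Z_n \cong \langle \omega \rangle$, the function $v = u^{[\Z_n : H]}$ is invariant under the action of $H$ on $\Omega = u^{-1}(A_s)$, and one can check on the annular neighborhood $A_s$ that $v$ extends as a Blaschke product $\psi_2$ satisfying $\phi = \psi_1 \circ \psi_2$ with $\psi_1(w) = w^{[\Z_n : H]}$ composed with the appropriate factor. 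Either construction reduces the problem to verifying boundary modulus $1$ and counting the degree, which is where the bulk of the technical work lies.
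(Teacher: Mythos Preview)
The paper does not prove this lemma at all: it is stated with the attribution \cite{DPW12} and used without argument, so there is no ``paper's own proof'' to compare your proposal against. Your outline is therefore not in conflict with anything in the paper, and the forward direction you sketch is essentially the standard one.

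One remark on the reverse direction of your alternative route: the exponent is off. If $|H|=m$ then, since $\Z_n$ is cyclic, $H=\{0,\,n/m,\,2n/m,\ldots\}$ and the $H$-invariant power of $u$ is $u^{m}$ (because $(\omega^{jn/m}u)^{m}=\omega^{jn}u^{m}=u^{m}$), not $u^{[\Z_n:H]}=u^{n/m}$. With that correction one has $\phi=u^{n}=(u^{m})^{n/m}$ near the boundary, and the remaining work, as you note, is to show that $u^{m}$ extends across the cut $\gamma$ to a global Blaschke product of order $m$; this is exactly where the hypothesis that $H$ is a union of equivalence classes $G_{k_j}$ enters, since it guarantees single-valuedness of the symmetric functions of $\{\rho_i:i\in H\}$ on $\D\setminus E$.
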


From this lemma we see that if $\phi$ is a finite Blaschke product of order $6$, then the partition $\{\{0\}, \{1, 5\}, \{2, 4\}, \{3\}\}$ is impossible - in fact - if the partition is $\{\{0\}, \{1, 5\}, \{2, 4\}, \{3\}\}$, then $\{0\} \cup \{3\}, \{0\} \cup \{2, 4\}$ are two subgroups of $\Z_6$, and hence the proof of Lemma \ref{dpwrarbp} in \cite{DPW12} produces the finite Blaschke products $\psi_1$ of order $2$, $\psi_2$ of order $3$, $\psi_3$ of order $3$ and $\psi_4$ of order $2$ such that $\phi = \psi_2 \circ \psi_1 = \psi_4 \circ \psi_3$, which is impossible.

Recall that if $\phi$ is a finite Blaschke product of order $n$ then there are $\alpha \in \D$ and a Blaschke product $\psi$ of order $n-2$ such that $\phi$ is equivalent to $\varphi_\alpha^2 \psi$. If $\phi$ is a finite Blaschke product of order $6$ and $\phi$ is reducible, then there are a finite Blaschke product $\psi_1$ of order $2$ and a finite Blaschke product $\psi_2$ of order $3$ such that $\phi = \psi_1 \circ \psi_2$ or $\phi = \psi_2\circ \psi_1$. We have the following cases.

(A) $\phi = \varphi_\alpha^6, \alpha \in \D$. Then the partition for $\phi$ is $\{\{0\}, \{1\}, \{2\}, \{3\}, \{4\}, \{5\}\}$.

(B) $\phi = \psi_1 \circ \psi_2 = \varphi_\alpha^2(\varphi_\beta^3), \alpha , \beta \in \D, \alpha \neq 0$. Then by a little calculation, we see that the Riemann surface $S_\phi$ has four connected components. Since $\rho_0, \rho_2, \rho_4$ are local inverses for $\psi_2$ and analytic in $\D$, the partition for $\phi$ is $\{\{0\}, \{1, 3, 5\}, \{2\}, \{4\}\}$.

(C) $\phi = \psi_2\circ \psi_1 = \varphi_\alpha^3(\varphi_\gamma^2), \alpha, \gamma \in \D, \alpha \neq 0$. Then the Riemann surface $S_\phi$ has four connected components. Since $\rho_0, \rho_3$ are local inverses for $\psi_1$ and analytic in $\D$. By conditions ($A_1$), ($A_2$) and ($A_3$), we obtain that the possible partitions for $\phi$ are $\{\{0\}, \{1, 5\}, \{2, 4\}, \{3\}\}$ and $\{\{0\}, \{1, 4\}, \{2, 5\}, \{3\}\}$. But, $\{\{0\}, \{1, 5\}, \{2, 4\}, \{3\}\}$ doesn't satisfy Lemma \ref{dpwrarbp}, so the partition for $\phi$ is $\{\{0\}, \{1, 4\}, \{2, 5\}, \{3\}\}$.

(D) $\phi = \psi_2\circ \psi_1 = (\varphi_\alpha^2 \varphi_\beta)\circ(\varphi_\gamma^2), \alpha, \beta, \gamma \in \D, \alpha \neq \beta$. Then the Riemann surface $S_\phi$ has three connected components. Since $\rho_0, \rho_3$ are local inverses for $\psi_1$ and analytic in $\D$, the partition is $\{\{0\}, \{1, 2, 4, 5\}, \{3\}\}$.

(E) $\phi = \psi_1 \circ \psi_2 = \varphi_\alpha^2(\varphi_\beta^2\varphi_\gamma), \beta \neq \gamma$. Then the Riemann surface $S_\phi$ has three connected components. Since $\rho_0(z) = z$, $\rho_2(z), \rho_4(z)$ are solutions for $\frac{\psi_2(w) - \psi_2(z)}{w - z} = 0$, it follows that $\rho_2(z)$ and $\rho_4(z)$ are not analytic in $\D$, so the partition is $\{\{0\}, \{1, 3, 5\}, \{2, 4\}\}$.

(F) $\phi$ is not reducible. If $\phi$ is not reducible, suppose $G_1 = \{0\}$, then by Lemma \ref{dpwrarbp} and condition ($A_3$), for any $i \in \{2, \cdots, q\}$, $G_i \neq \{3\}, \{2, 4\}, \{2\}, \{4\}$. Then it is clear that for any $i \in \{2, \cdots, q\}$, $G_i$ is not a singleton. Without loss of generality, suppose $3 \in G_2$. If $q = 3$, then $\#G_3 = 2$, it then follows from ($A_2$) that $G_3^{-1} = G_3$, hence $G_3 = \{2, 4\}$, this is a contradiction. Thus $q = 2$, and $G_2 = \{1, 2, 3, 4, 5\}$. So the partition is $\{\{0\}, \{1, 2, 3, 4, 5\}\}$.

From the above discussion, when $n=6,$ we have the following possible partitions:%
$$
\begin{cases}
\{\{0\},\{1\},\{2\},\{3\},\{4\},\{5\}\};\ \\
\{\{0\},\{1,3,5\},\{2\},\{4\}\};\{\{0\},\{1,4\},\{2,5\},\{3\}\};\\
\{\{0\},\{1,2,4,5\},\{3\}\};\{\{0\},\{1,3,5\},\{2,4\}\};\\
\{\{0\},\{1,2,3,4,5\}\}.
\end{cases}
$$
Furthermore all these partitions actually happen. The above discussion gives an analogue of Theorem \ref{urstdpopf} for $n=6.$

Now we can prove the main result of this section.
\begin{theorem}\label{rdcwohsut}
Let $\phi$ be a finite Blaschke product of order $6$. Then one of the following holds.
\begin{itemize}
\item[(A)] If $\phi$ is equivalent to $z^6$, then $\dim \widetilde{\HA_\phi} = 6$.
\item[(B)] If $\phi$ is equivalent to $\varphi_\alpha^2(z^3), \alpha \neq 0$, then $\dim \widetilde{\HA_\phi} = 3$.
\item[(C)] If $\phi$ is equivalent to $\varphi_\alpha^2 (\varphi_\beta^3(z)), \alpha = \beta^3 \neq 0$, then $\dim \widetilde{\HA_\phi} = 2$.
\item[(D)] If $\phi$ is equivalent to $\varphi_\alpha^3(z^2), \alpha \neq 0$, then $\dim \widetilde{\HA_\phi} = 2$.
\item[(E)] If $\phi$ is equivalent to $\varphi_\alpha^3 (\varphi_\beta^2(z)), \alpha = \beta^2 \neq 0$, then $\dim \widetilde{\HA_\phi} = 3$.
\item[(F)] If $\phi$ is equivalent to $(\varphi_\alpha^2 \varphi_\beta)\circ (z^2), \alpha \neq \beta$, then $\dim \widetilde{\HA_\phi} = 2$.
\item[(G)] If $\phi$ is equivalent to $\varphi_\alpha^2(\varphi_\beta^2\varphi_\gamma), \beta \neq \gamma, \alpha = \beta^2\gamma$, then $\dim \widetilde{\HA_\phi} = 2$.
\item[(H)] If $\phi$ is not equivalent to any of the functions in (A)-(G), then $M_\phi$ is irreducible on $D^2$, i.e. $\dim \widetilde{\HA_\phi} = 1$.
\end{itemize}
Consequently, $M_\phi$ is reducible on $D^2$ if and only if $\phi$ is equivalent to one of the functions in (A)-(G).
\end{theorem}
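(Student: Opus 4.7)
The plan is to follow the case-by-case template established in the proof of Theorem \ref{rdopo5ubp1}. Since equivalent Blaschke products share their reducing subspaces, I normalize $\phi(0)=0$ and run through the six possible partitions of $\{0,1,\ldots,5\}$ enumerated immediately before the statement. For each partition, Proposition \ref{vaiddatlms} reduces the computation of $\dim\widetilde{\HA_\phi}$ to that of
\[
\dim\HL=\dim\text{span}\Bigl\{(a_1,\ldots,a_q)\in\C^q:\ \textstyle\sum_{i=1}^{q} a_iF_i(0)=0\text{ and }\sum_{i=1}^{q}\bar a_iH_i(0)=0\ \text{for all}\ f\in D^2\Bigr\}.
\]
The task in each case is to compute $\rho_i(0)$ from $\phi^{-1}\circ\phi(0)$ and then determine precisely which coefficient vectors $(a_1,\ldots,a_q)$ satisfy the two linear constraints for \emph{every} $f\in D^2$.

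Cases (A) through (G) are the reducible cases. For (A), $\phi\sim z^6$ gives $\rho_j(0)=0$ for all $j$, the constraints are vacuous, and $\dim\HL=6$. For the partition $\{\{0\},\{1,3,5\},\{2\},\{4\}\}$, the singletons $\{2\},\{4\}$ force $\rho_2,\rho_4$ to be M\"obius transforms, and matching $\phi=\varphi_\alpha^2(\varphi_\beta^3)$ splits into the subcase $\beta=0$ (theorem case (B), where $\rho_2(0)=\rho_4(0)=0$ trivially kills two of the four coordinate relations and $\dim\HL=3$) and the subcase $\beta\neq 0$ requiring the compatibility $\alpha=\beta^3$ to further cut $\HL$ down to dimension $2$ (case (C)). Cases (D) and (E) treat the partition $\{\{0\},\{1,4\},\{2,5\},\{3\}\}$ in parallel with $\psi_1,\psi_2$ interchanged, and cases (F), (G) dispose of the partitions $\{\{0\},\{1,2,4,5\},\{3\}\}$ and $\{\{0\},\{1,3,5\},\{2,4\}\}$ by identical bookkeeping. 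The arithmetic identities $\alpha=\beta^3$, $\alpha=\beta^2$, $\alpha=\beta^2\gamma$ will emerge by extracting, exactly as in Proposition \ref{ptwpstrs}, the precise conditions needed to factor $f(w,z)=P(w)Q(z)-P(z)Q(w)$ into the irreducible pieces prescribed by the given partition.

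The main obstacle is case (H). I must show that whenever $\phi$ has one of the non-singleton partitions but is not equivalent to any explicit form in (A)--(G), one has $\dim\HL=1$. The strategy is by contradiction: assume a nontrivial relation $\sum a_iF_i(0)=0$ holds for every $f\in D^2$, then specialize $f$ to test polynomials $\varphi_\alpha,\varphi_\alpha\varphi_\beta,\ldots$ and to symmetric combinations of local inverses such as $\sum_{\rho\in G_i}f_1(\rho(z))$ and $\prod_{\rho\in G_i}f_1(\rho(z))$, each of which is monodromy-invariant and therefore extends analytically across the branch set $E$. Evaluating these symmetric functions at $0$ and at the branched points $\alpha,\beta,\gamma$ in two different ways yields algebraic identities that either match those singled out in (A)--(G) or contradict the hypothesis of inequivalence. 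Each of the non-singleton partitions will require its own sub-lemma modelled on Lemma \ref{psptwpds}, listing the admissible configurations of $\{\rho_i(0),\rho_i(\alpha),\ldots\}$; the bookkeeping of these configurations is where the computational weight sits.

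The most delicate piece will be case (G), with partition $\{\{0\},\{1,3,5\},\{2,4\}\}$ and $\phi=\varphi_\alpha^2(\varphi_\beta^2\varphi_\gamma)$, $\beta\neq\gamma$, since all three parameters are nonzero and the orbit structure on the five nontrivial branches is the richest. I expect a polynomial-factorization argument via an analogue of Lemma \ref{farffud} to pin down the single admissible identity $\alpha=\beta^2\gamma$. The overall structure of the proof thus parallels that of Section 3, but with six partitions to dispatch and an additional layer of arithmetic compatibility to unravel before (H) can be concluded; the final sentence of the theorem is then a direct consequence of combining (A)--(H).
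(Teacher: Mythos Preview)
Your plan is essentially the paper's: organize by the six partitions listed before the theorem, and for each use Proposition~\ref{vaiddatlms} to reduce $\dim\widetilde{\HA_\phi}$ to computing $\dim\HL$ from the configuration of the values $\rho_i(0)$, testing against suitable $f\in D^2$. Two misdirections are worth correcting. First, no polynomial factorization in the style of Lemma~\ref{farffud} or Proposition~\ref{ptwpstrs} is needed for $n=6$: unlike the order-$5$ case, every nontrivial partition here is already determined by the compositional form of $\phi$ (this is the content of the discussion preceding the theorem), so factoring $f(w,z)$ yields nothing new. The identities $\alpha=\beta^3$, $\alpha=\beta^2$, $\alpha=\beta^2\gamma$ have a much simpler origin: each is precisely the condition that the inner map $\varphi_\alpha\circ\psi$ vanishes at $0$, which forces coincidences among the $\rho_i(0)$ and hence degeneracy in the linear system defining $\HL$. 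The paper simply writes down $\rho_i(0)$ from the explicit local-inverse formulas and reads off $\dim\HL$ in each subcase. Second, you misjudge where the work lies. Case (G) is in fact one of the easiest: once one notes $\alpha=\beta^2\gamma\iff\psi(0)=\alpha$, the $\dim\HL$ computation is two lines. The genuinely delicate step is proving irreducibility for $(\varphi_\alpha^2\varphi_\beta)\circ(\varphi_\gamma^2)$ with $\gamma\neq 0$ (partition $\{\{0\},\{3\},\{1,2,4,5\}\}$): one must rule out that all four of $\rho_1(0),\rho_2(0),\rho_4(0),\rho_5(0)$ lie in $\{0,\rho_3(0)\}$, and the paper does this by a five-subcase contradiction using exactly the group-like symmetric-function identities you propose for (H). Finally, do not overlook that the singleton partition $\{\{0\},\{1\},\ldots,\{5\}\}$ also covers $\varphi_\alpha^6$ with $\alpha\neq 0$; this falls under (H) and is dispatched immediately by Corollary~\ref{irddpan}.
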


\begin{proof}
We use the above partitions and Proposition \ref{vaiddatlms} to discuss the reducibility of $M_\phi$ on $D^2$.

(A) $\phi$ is equivalent to $\varphi_\alpha^6$. If $\alpha \neq 0$, then by Corollary \ref{irddpan}, $M_\phi$ is irreducible on $D^2$. If $\alpha = 0$, then $M_\phi$ has exact $6$ minimal reducing subspaces on $D^2$ (\cite{SZ02}):
$$
N_j = \text{span} \{z^l: l \equiv j ~\text{mod}~ 6\}, j = 0, 1, \cdots, 5.
$$

(B) $\phi$ is equivalent to $\varphi_\alpha^2(\varphi_\beta^3(z)), \alpha \neq 0$, then the partition is $\{\{0\}, \{2\}, \{4\}, \{1, 3, 5\}\}$. Without loss of generality, suppose $\phi = \varphi_\alpha^2(\varphi_\beta^3), \alpha \neq 0$, then
\begin{align*}
&\phi(w) - \phi(z) \\&= [\varphi_\alpha(\varphi_\beta^3(w)) - \varphi_\alpha(\varphi_\beta^3(z))] [\varphi_\alpha(\varphi_\beta^3(w)) + \varphi_\alpha(\varphi_\beta^3(z))]\\
& = \frac{(|\alpha|^2 - 1) (\varphi_\beta^3(w) - \varphi_\beta^3 (z))}{(1- \overline{\alpha}\varphi_\beta^3(w)) (1- \overline{\alpha}\varphi_\beta^3(z))}[\varphi_\alpha(\varphi_\beta^3(w)) + \varphi_\alpha(\varphi_\beta^3(z))]\\
& = \frac{(|\alpha|^2 - 1) \big(\varphi_\beta(w) - \varphi_\beta (z)\big) \big(\varphi_\beta(w) - \omega_3\varphi_\beta (z)\big) \big(\varphi_\beta(w) - \omega_3^2\varphi_\beta (z)\big) }{(1- \overline{\alpha}\varphi_\beta^3(w)) (1- \overline{\alpha}\varphi_\beta^3(z))[\varphi_\alpha(\varphi_\beta^3(w)) + \varphi_\alpha(\varphi_\beta^3(z))]^{-1}},
\end{align*}
where $\omega_3 = e^{\frac{2\pi i}{3}}$. Thus by Theorem \ref{ccrsrsbd},
$$\rho_0(z) = z, \quad \rho_2(z) = \varphi_\beta(\omega_3\varphi_\beta(z)), \quad \rho_4(z) = \varphi_\beta(\omega_3^2\varphi_\beta(z)),$$
and $\rho_1(z), \rho_3(z), \rho_5(z)$ are solutions for $\varphi_\alpha(\varphi_\beta^3(w)) + \varphi_\alpha(\varphi_\beta^3(z)) = 0$.
It follows that
$$\rho_0(0) = 0, \rho_2(0) = \varphi_\beta(\omega_3 \beta): = \alpha_1, \rho_4(0) = \varphi_\beta(\omega_3^2 \beta): = \beta_1,
$$
and
$$
\rho_1(0): = \gamma_1, \rho_3(0): = \gamma_2, \rho_5(0): = \gamma_3
$$
satisfy
$$
\varphi_\alpha(\varphi_\beta^3(\gamma_i)) = - \varphi_\alpha(\beta^3).
$$

We use Proposition \ref{vaiddatlms} to determine the dimension of $\widetilde{\HA_\phi}$. Recall that for $f \in D^2$,
$$
F_i(z) = \sum_{\rho \in G_i}f(\rho(z)) \rho(z))\ \&\ H_i(z) = \sum_{\rho \in G_i^{-1}}f(\rho(z)) \rho(z)).
$$
Since $G_i^{-1} = G_i$, we have $H_i(z) = F_i(z)$. Suppose $a_1, a_2, a_3, a_4$ satisfy
\begin{align}\label{eqfa246p23}
&\sum_{i=1}^4 a_i F_i(0)\\ & = a_2 f(\rho_2(0)) \rho_2(0) + a_3 f(\rho_4(0)) \rho_4(0) \notag\\
&\hspace{0.5cm}  + a_4 [f(\rho_1(0)) \rho_1(0) + f(\rho_3(0)) \rho_3(0) + f(\rho_5(0)) \rho_5(0)] \notag\\
& = a_2 f(\alpha_1) \alpha_1 + a_3 f(\beta_1)\beta_1 + a_4 [f(\gamma_1) \gamma_1 + f(\gamma_2) \gamma_2 + f(\gamma_3) \gamma_3] \notag\\
& = 0, \notag
\end{align}
and
$$\sum_{i=1}^4 \overline{a_i} H_i(0) = \sum_{i=1}^4 \overline{a_i} F_i(0) = 0.$$

(i) $\beta = 0$. Then $\alpha_1 = \beta_1 = 0$, $\gamma_i$ are solutions for $\varphi_\alpha(-\gamma^3) = - \alpha$, thus $\gamma_1, \gamma_2, \gamma_3$ are mutually distinct and not zero, and (\ref{eqfa246p23}) becomes
$$a_4 [f(\gamma_1) \gamma_1 + f(\gamma_2) \gamma_2 + f(\gamma_3) \gamma_3] = 0.$$
Choosing $f = \varphi_{\gamma_1}\varphi_{\gamma_2}$ we get $a_4 = 0$. It follows that $a_1, a_2, a_3$ are arbitrary, so $\dim \widetilde{\HA_\phi} = \dim \HL = 3$, and $M_\phi$ is reducible on $D^2$.

(ii) $\beta \neq 0, \alpha = \beta^3$. Then $\varphi_\alpha (\varphi_\beta^3(z)) = a z \varphi_{b_1} \varphi_{b_2}, |a| = 1, b_1, b_2 \in \D$. Note that
$$\varphi_\alpha (\varphi_\beta^3(\alpha_1)) = \varphi_\alpha((\omega_3 \beta)^3) = \varphi_\alpha(\beta^3) = 0,$$
and similarly, $\varphi_\alpha (\varphi_\beta^3(\beta_1)) = 0$. So, it follows that $\varphi_\alpha (\varphi_\beta^3(z)) = a z \varphi_{\alpha_1} \varphi_{\beta_1}$. Without loss of generality, suppose $\gamma_1 = 0, \gamma_2 = \alpha_1, \gamma_3 = \beta_1$, then (\ref{eqfa246p23}) becomes
$$a_2 f(\alpha_1) \alpha_1 + a_3 f(\beta_1)\beta_1 + a_4 [f(\alpha_1) \alpha_1 + f(\beta_1) \beta_1] = 0,$$
therefore $a_2 = -a_4 = a_3$, so $\dim \widetilde{\HA_\phi} = \dim \HL = 2$, and $M_\phi$ is reducible on $D^2$.

(iii) $\beta \neq 0, \alpha \neq \beta^3$. Then $\alpha_1, \beta_1, \gamma_1, \gamma_2, \gamma_3$ are mutually distinct and not zero. Upon taking
$$f = \varphi_{\beta_1}\varphi_{\gamma_1}\varphi_{\gamma_2}\varphi_{\gamma_3}\ \ \hbox{or}\ \  \varphi_{\gamma_1}\varphi_{\gamma_2}\varphi_{\gamma_3}, \varphi_{\gamma_2}\varphi_{\gamma_3}
$$
in (\ref{eqfa246p23}), we get $a_2 = a_3 = a_4 = 0$. Thus $\dim \widetilde{\HA_\phi} = \dim \HL = 1$, and $M_\phi$ is irreducible on $D^2$.

(C) $\phi$ is equivalent to $\varphi_\alpha^3(\varphi_\beta^2(z)), \alpha \neq 0$, then the partition is $\{\{0\}, \{3\}, \{1, 4\}, \{2, 5\}\}$. Without loss of generality, suppose $\phi = \varphi_\alpha^3(\varphi_\beta^2), \alpha \neq 0$, by the same calculation in (B), we obtain $\rho_0(z) = z, \rho_3(z) = \varphi_\beta(-\varphi_\beta(z))$, $\rho_1(z), \rho_4(z)$ are solutions for $\varphi_\alpha(\varphi_\beta^2(w)) = \omega_3\varphi_\alpha(\varphi_\beta^2(z))$, and $\rho_2(z), \rho_5(z)$ are solutions for $\varphi_\alpha(\varphi_\beta^2(w)) = \omega_3^2\varphi_\alpha(\varphi_\beta^2(z))$, where $\omega_3 = e^{2\pi i/3}$. It follows that
$$\rho_0(0) = 0,\ \rho_3(0) = \varphi_\beta(-\beta): = \alpha_1,\  \rho_1(0): = \beta_1, \rho_4(0): = \beta_2
$$
satisfy
$$
\varphi_\alpha(\varphi_\beta^2(\beta_i)) = \omega_3\varphi_\alpha(\beta^2),
$$
and
$$
\rho_2(0): = \gamma_1,\  \rho_5(0): = \gamma_2
$$
satisfy
$$
\varphi_\alpha(\varphi_\beta^2(\gamma_i)) = \omega_3^2\varphi_\alpha(\beta^2).
$$

For $f \in D^2$, suppose $a_1, a_2, a_3, a_4$ satisfy
\begin{equation}\label{eqpa32s}
\sum_{i=1}^4 a_i F_i(0)= a_2 f(\alpha_1) \alpha_1 + a_3 [f(\beta_1)\beta_1 + f(\beta_2)\beta_2] + a_4 [f(\gamma_1) \gamma_1 + f(\gamma_2) \gamma_2] = 0,
\end{equation}
and
$$\sum_{i=1}^4 \overline{a_i} F_i(0) = 0.$$

(i) $\beta = 0$. Then $\alpha_1 = 0$, $\beta_1, \beta_2, \gamma_1, \gamma_2$ are mutually distinct and not zero, and (\ref{eqpa32s}) becomes
$$a_3 [f(\beta_1)\beta_1 + f(\beta_2)\beta_2] + a_4 [f(\gamma_1) \gamma_1 + f(\gamma_2) \gamma_2].$$
Choosing $f = \varphi_{\beta_1}\varphi_{\beta_2}\varphi_{\gamma_1}, \varphi_{\beta_1}$, we obtain $a_3 = a_4 = 0$. Thus $\dim \widetilde{\HA_\phi} = \dim \HL = 2$, and $M_\phi$ is reducible on $D^2$.

(ii) $\beta \neq 0, \alpha = \beta^2$. Then $\alpha_1 \neq 0$, and $\beta_1, \beta_2, \gamma_1, \gamma_2$ satisfy
$$\varphi_\alpha(\varphi_\beta^2(z)) = 0.$$
Since $0, \alpha_1$ satisfy $\varphi_\alpha(\varphi_\beta^2(z)) = 0$, it follows that $\{\beta_1, \beta_2\} = \{\gamma_1, \gamma_2\} = \{0, \alpha_1\}$, and (\ref{eqpa32s}) becomes
$$a_2 f(\alpha_1) \alpha_1 + a_3 f(\alpha_1) \alpha_1 + a_4 f(\alpha_1) \alpha_1 = 0.$$
Thus $a_2 + a_3 + a_4 = 0$. It follows that $\dim \widetilde{\HA_\phi} = \dim \HL = 3$, and $M_\phi$ is reducible on $D^2$.

(iii) $\beta \neq 0, \alpha \neq \beta^2$. Then $\alpha_1, \beta_1, \beta_2, \gamma_1 \gamma_2$ are mutually distinct and not zero. By the same argument in (B), we have $\dim \widetilde{\HA_\phi} = \dim \HL = 1$, and $M_\phi$ is irreducible on $D^2$.

(D) If $\phi$ is equivalent to $(\varphi_\alpha^2 \varphi_\beta)\circ(\varphi_\gamma^2), \alpha \neq \beta$, then the partition is $\{\{0\}, \{3\}, \{1, 2, 4, 5\}$. Suppose $(\varphi_\alpha^2 \varphi_\beta)\circ(\varphi_\gamma^2), \alpha \neq \beta$, by the same calculation in (B), we obtain that
$$
\rho_0(z) = z,\ \rho_3(z) = \varphi_\gamma(-\varphi_\gamma(z)),
$$
and $\rho_1(z), \rho_2(z), \rho_4(z), \rho_5(z)$ are solutions of
$$
[\phi(w) - \phi(z)] / [(\varphi_\gamma(w) - \varphi_\gamma(z)) (\varphi_\gamma(w) + \varphi_\gamma(z))] = 0.
$$
It follows that
$$
\rho_0(0) = 0, \rho_3(0) = \varphi_\gamma(-\gamma): = \alpha_1,
$$
and
$$
\rho_1(0): = \beta_1,\ \rho_2(0): = \beta_2, \rho_4(0): = \beta_3, \rho_5(0): = \beta_4
$$
satisfy
$$[\phi(w) - \phi(0)] / [(\varphi_\gamma(w) - \gamma) (\varphi_\gamma(w) + \gamma)] = 0.$$

For $f \in D^2$, suppose $a_1, a_2, a_3$ satisfy
\begin{equation}\label{eqfp232s}
\sum_{i=1}^3 a_i F_i(0) = a_2 f(\alpha_1) \alpha_1 + a_3 [f(\beta_1)\beta_1 + f(\beta_2)\beta_2 + f(\beta_3) \beta_3 + f(\beta_4) \beta_4] = 0,
\end{equation}
and
$$\sum_{i=1}^3 \overline{a_i} F_i(0) = 0.$$

(i) $\gamma = 0$. Then $\alpha_1 = 0$, but not all $\beta_i$ are zero, it follows that $a_3 = 0$, so $\dim \widetilde{\HA_\phi} = \dim \HL = 2$, and $M_\phi$ is reducible on $D^2$.

(ii) $\gamma \neq 0$. We claim that there exists $\beta_i$ such that $\beta_i \neq 0$ and $\beta_i \neq \alpha_1$. If we assume this claim, then by the same argument in (B)-(iii), we obtain that $a_2 = a_3 = 0$, thus $\dim \widetilde{\HA_\phi} = \dim \HL = 1$, and $M_\phi$ is irreducible on $D^2$.

Now we prove the claim. We prove it by contradiction. Let
$$
\begin{cases}
f_1(z) = \rho_1(z) + \rho_2(z) + \rho_4(z) + \rho_5(z);\\
g(z) = f_1(\rho_1(z)) + f_1(\rho_2(z)) + f_1(\rho_4(z)) + f_1(\rho_5(z)).
\end{cases}
$$
Then $g(z) = 2 f_1(z) + 4 \rho_3(z) + 4 z$. There are five cases.

If $\{\beta_1, \beta_2, \beta_3, \beta_4\} = \{0, 0, 0, 0\}$, then
$$g(0) = 2f_1(0) + 4\alpha_1 + 0 = 4f_1(0) = 0,$$
so $\alpha_1 = 0$, this is a contradiction.

If $\{\beta_1, \beta_2, \beta_3, \beta_4\} = \{0, 0, 0, \alpha_1\}$, then
$$g(0) = 2f_1(0) + 4\alpha_1 = 3f_1(0) + f_1(\alpha_1).$$
Note that $f_1(0) = \alpha_1$, $f_1(\alpha_1) = 0$ or $\alpha_1$, we get $\alpha_1 = 0$, this is a contradiction.

If $\{\beta_1, \beta_2, \beta_3, \beta_4\} = \{0, 0, \alpha_1, \alpha_1\}$, then $$\phi^{-1}\circ \phi(\alpha_1) = \phi^{-1}\circ \phi(0) = \{0, 0, 0, \alpha_1, \alpha_1, \alpha_1\}.
$$ We show that $\alpha \neq \gamma^2$. If $\alpha = \gamma^2$, then $$
\begin{cases}
\varphi_\alpha(\varphi_\gamma^2(z)) = a_1 z\varphi_{\alpha_1},\ |a_1| = 1;\\
\varphi_\beta(\varphi_\gamma^2(z)) = a_2 \varphi_{b_1}\varphi_{b_2},\ |a_2| = 1,\ b_1 \neq b_2;\\
\alpha_1\notin\{b_1,b_2\},\ b_1b_2 \neq 0.
\end{cases}
$$
Hence
$$\phi(z) = a_1^2 a_2 z^2 \varphi_{\alpha_1}^2 \varphi_{b_1}\varphi_{b_2}.
$$
This implies
$$
\phi^{-1}\circ \phi(0) = \{0, 0, \alpha_1, \alpha_1, b_1, b_2\} \neq \{0, 0, 0, \alpha_1, \alpha_1, \alpha_1\},
$$
which is a contradiction. Therefore $\alpha \neq \gamma^2$.

Note that
$$
\frac{\phi(0) - \phi(z)}{1 - \overline{\phi(0)}\phi(z)} = a z^3 \varphi_{\alpha_1}^3(z), |a| = 1.
$$
So, letting $\phi_1(z) = z^3 \varphi_{\alpha_1}^3$ yields that for $z \in \D$, $\phi'(z) = 0$ if and only if $\phi_1'(z) = 0$. Since
$$\phi'(z) = \varphi_\alpha(\varphi_\gamma^2(z)) [ 2\varphi_\alpha'(\varphi_\gamma^2(z))\varphi_\beta(\varphi_\gamma^2(z)) + \varphi_\alpha(\varphi_\gamma^2(z)) \varphi_\beta(\varphi_\gamma^2(z))] 2\varphi_\gamma(z) \varphi_\gamma'(z),$$
if $z = \gamma$ or $\varphi_\gamma^2(z) = \alpha$, then $\phi'(z) = 0$, but
$$\phi_1'(z) = 3z^2 \varphi_{\alpha_1}^2(z) [\varphi_{\alpha_1} (z)+ z \varphi_{\alpha_1}'(z)]
$$ only has one zero different from $0$ and $\alpha_1$, this is a contradiction. Thus
$$\{\beta_1, \beta_2, \beta_3, \beta_4\} = \{0, 0, \alpha_1, \alpha_1\}$$ is impossible.

If $\{\beta_1, \beta_2, \beta_3, \beta_4\} = \{0, \alpha_1, \alpha_1, \alpha_1\}$, then
$$g(0) = 2f_1(0) + 4\alpha_1 = f_1(0) + 3f_1(\alpha_1).$$
Since $f_1(0) = 3\alpha_1$, we have $f_1(\alpha_1) = \frac{7}{3}\alpha_1$ - but this is impossible.

If $\{\beta_1, \beta_2, \beta_3, \beta_4\} = \{\alpha_1, \alpha_1, \alpha_1, \alpha_1\}$, then
$$g(0) = 2f_1(0) + 4\alpha_1 = 4f_1(\alpha_1).$$
Since $f_1(0) = 4 \alpha_1$, we get $f_1(\alpha_1) = 2 \alpha_1$. Note that
$$\phi^{-1}\circ \phi(\alpha_1) = \phi^{-1}\circ \phi(0) = \{0, \alpha_1, \alpha_1, \alpha_1, \alpha_1, \alpha_1\}\ \&\ \rho_0(\alpha_1) = \alpha_1, \rho_3(\alpha_1) = 0.
$$
Thus $f_1(\alpha_1) = 4 \alpha_1$ - this is a contradiction. Therefore the claim is proved.

(E) $\phi$ is equivalent to $\varphi_\alpha^2 \circ (\varphi_\beta^2 \varphi_\gamma), \beta \neq \gamma$, then the partition is $\{\{0\}, \{2, 4\}, \{1, 3, 5\}$. Suppose
$$\varphi_\alpha^2 \circ (\varphi_\beta^2 \varphi_\gamma), \beta \neq \gamma.
$$
If $\psi(z) = \varphi_\beta^2(z) \varphi_\gamma(z)$, then
$$
\{\rho_0(z) = z,\ \rho_2(z),\ \rho_4(z)\}
$$
are solutions for $\frac{\psi(w) - \psi(z)}{w - z} = 0$, and
$$
\{\rho_1(z), \rho_3(z), \rho_5(z)\}
$$
are solutions of
$$
\varphi_\alpha(\psi(w)) = - \varphi_\alpha(\psi(z)).
$$
Thus
$$\rho_0(0) = 0,\  \rho_2(0): = \alpha_1,\  \rho_4(0): = \alpha_2
$$
satisfy
$$\frac{\psi(\alpha_i) - \psi(0)}{z} = 0,
$$
and
$$\rho_1(0): = \beta_1,\  \rho_3(0): = \beta_2, \rho_5(0): = \beta_3
$$
satisfy
$$\varphi_\alpha(\psi(\beta_j)) = - \varphi_\alpha(\psi(0)).
$$

For $f \in D^2$, suppose $a_1, a_2, a_3$ satisfy
\begin{equation}\label{efa23a2b2g}
\sum_{i=1}^3 a_i F_i(0)\\
= a_2 [f(\alpha_1) \alpha_1 + f(\alpha_2) \alpha_2] + a_3 [f(\beta_1)\beta_1 + f(\beta_2)\beta_2 + f(\beta_3) \beta_3] = 0,
\end{equation}
and
$$\sum_{i=1}^3 \overline{a_i} F_i(0) = 0.$$

(i) $\psi(0) = \alpha$, i.e., $\alpha = \beta^2 \gamma$. Then
$$\varphi_\alpha(\psi(z)) = a z \varphi_{b_1} \varphi_{b_2},\ \  |a| = 1, b_1, b_2 \in \D
$$ and not both $b_1$ and $b_2$ are zero. Observing
$$
\psi(w) = \psi(z)\Leftrightarrow\varphi_\alpha(\psi(w)) = \varphi_\alpha(\psi(z)),
$$
we have
$$
\{\rho_2(0), \rho_4(0)\} = \{b_1, b_2\}\ \ \&\ \  \{\rho_1(0), \rho_3(0), \rho_5(0)\} = \{0, b_1, b_2\}.
$$
So (\ref{efa23a2b2g}) becomes
$$a_2 [f(b_1) b_1 + f(b_2) b_2] + a_3 [f(b_1) b_1 + f(b_2) b_2] = 0.$$
Therefore $a_2 = - a_3$, so $\dim \widetilde{\HA_\phi} = \dim \HL = 2$, then $M_\phi$ is reducible on $D^2$.

(ii) $\alpha \neq \beta^2 \gamma$. Note that $\beta \neq \gamma$, we have at least one $\alpha_i$ is not zero. Since
$$
\begin{cases}
\varphi_\alpha(\psi(\alpha_i)) = \varphi_\alpha(\psi(0)) \neq 0;\\
\varphi_\alpha(\psi(\beta_i)) = -\varphi_\alpha(\psi(0)) \neq \varphi_\alpha(\psi(0)),
\end{cases}
$$
 it follows that
 $$
 \begin{cases}
 \beta_1\beta_2\beta_3 \neq 0;\\
 \alpha_i \neq \beta_j\  \ \forall\ (i,j)\in\{1, 2\}\times\{1, 2, 3\}.
 \end{cases}
 $$
 Then by the same argument in (B)-(iii), we get $a_2 = a_3 = 0$, thus $\dim \widetilde{\HA_\phi} = \dim \HL = 1$, so $M_\phi$ is irreducible on $D^2$.

(F) $\phi$ is not reducible, then the partition is $\{\{0\}, \{1, 2, 3, 4, 5\}\}$. Thus $M_\phi$ has exact two minimal reducing subspaces on $B^2$: $M_0(\phi), M_0(\phi)^\perp$. By Theorem 2.5 \cite{Luo}, $M_\phi$ is irreducible on $D^2$. The proof is complete.
\end{proof}

\section{Reducible $M_\phi$ on $D^2$ with $\phi$ being of order $7$}\label{s5}

In this section we show that if $\phi$ is a finite Blaschke product of order $7$, then $M_\phi$ is reducible on $D^2$ if and only if $\phi$ is equivalent to $z^7$. First we discuss the possible partitions $\{G_1, G_2, \cdots, G_q\}$ when the order of $\phi$ is $7$. When $n = 7$, by \cite[Corollary 8.4]{DSZ11}, $q \neq 6$. Without loss of generality, suppose $G_1 = \{0\}$. Let $m = \min\{\#G_2, \cdots, \# G_q\}$. If $m = 1$, then by condition $(A_3)$, we have $q = 7$. Thus we only need to consider $m \geq 2$.

(i)  If $q = 7$, then the partition is $\{\{0\}, \{1\}, \{2\}, \{3\}, \{4\}, \{5\}, \{6\}\}$.

(ii) If $q = 5, m = 2$, then $\#G_2, \#G_3, \#G_4, \#G_5 \geq 2$, which is impossible.

(iii) If $q = 4, m = 2$, then $\#G_2 = \#G_3 = \#G_4 = 2$.

$(a_1)$ If $G_2 = \{1, 2\}$, then by condition $(A_2)$, $G_3 = \{5, 6\}$, and hence $G_4 = \{3, 4\}$. But this case doesn't satisfy condition $(A_3)$;

$(a_2)$ If $G_2 = \{1, 3\}$, then by condition $(A_2)$, $G_3 = \{4, 6\}$, and hence $G_4 = \{2, 5\}$. But this case doesn't satisfy condition $(A_3)$;

$(a_3)$ If $G_2 = \{1, 4\}$, then by condition $(A_2)$, $G_3 = \{3, 6\}$, and hence $G_4 = \{2, 5\}$. But this case doesn't satisfy condition $(A_3)$;

$(a_4)$ If $G_2 = \{1, 5\}$, then by condition $(A_2)$, $G_3 = \{2, 6\}$, and hence $G_4 = \{3, 4\}$. But this case doesn't satisfy condition $(A_3)$;

$(a_5)$ If $G_2 = \{1, 6\}$, then $G_2 + G_2 = \{2, 0, 0, 5\}$, and hence  condition $(A_3)$ yields $G_3 = \{2, 5\}$ and $G_4 = \{3, 4\}$. This case also satisfies condition $(A_4)$;

$(a_6)$ If $G_2 = \{2, 3\}$, then condition $(A_2)$ implies $G_3 = \{4, 5\}$, and hence $G_4 = \{1, 6\}$. But this case doesn't satisfy condition $(A_3)$;

$(a_7)$ If $G_2 = \{2, 4\}$, then condition $(A_2)$ gives $G_3 = \{3, 5\}$, and hence $G_4 = \{1, 6\}$. But this case doesn't satisfy condition $(A_3)$;

$(a_8)$ If $G_2 = \{2, 5\}$, then $G_2 + G_2 = \{4, 0, 0, 3\}$, and hence condition $(A_3)$ yields $G_3 = \{3, 4\}$ and $G_4 = \{1, 6\}$. This case is essentially the case $(a_5)$;

$(a_9)$ If $G_2 = \{2, 6\}$, then condition $(A_2)$ yields $G_3 = \{1, 5\}$, and hence $G_4 = \{3, 4\}$. But this case doesn't satisfy condition $(A_3)$;

$(a_{10})$ If $G_2 = \{3, 4\}$, then $G_2 + G_2 = \{6, 0, 0, 1\}$, and hence condition $(A_3)$ implies $G_3 = \{1, 6\}$ and $G_4 = \{2, 5\}$. This case is essentially the case $(a_5)$;

$(a_{11})$ If $G_2 = \{3, 5\}$, then by condition $(A_2)$ one has $G_3 = \{2, 4\}$ and so $G_4 = \{1, 6\}$. But this case doesn't satisfy condition $(A_3)$;

$(a_{12})$ If $G_2 = \{3, 6\}$, then by condition $(A_2)$ one has $G_3 = \{1, 4\}$ and so $G_4 = \{2, 5\}$. But this case doesn't satisfy condition $(A_3)$;

$(a_{13})$ If $G_2 = \{4, 5\}$, then by condition $(A_2)$ one has $G_3 = \{2, 3\}$ and so $G_4 = \{1, 6\}$. But this case doesn't satisfy condition $(A_3)$;

$(a_{14})$ If $G_2 = \{4, 6\}$, then by condition $(A_2)$ one has $G_3 = \{1, 3\}$ and so $G_4 = \{2, 5\}$. But this case doesn't satisfy condition $(A_3)$;

$(a_{15})$ If $G_2 = \{5, 6\}$, then by condition $(A_2)$ one has $G_3 = \{1, 2\}$ and so $G_4 = \{3, 4\}$. But this case doesn't satisfy condition $(A_3)$.

Therefore if $q = 4, m = 2$, then we have the possible partition $\{\{0\}, \{1, 6\}, \{2, 5\}, \{3, 4\}\}$.

(iv) If $q = 3, m = 2$, then $\#G_2 = 2, \#G_3 = 4$. By condition $(A_2)$ one has $G_2 = G_2^{-1}$.

$(b_1)$ $G_2 = \{1, 6\}, G_3 = \{2, 3, 4, 5\}$;

$(b_2)$ $G_2 = \{2, 5\}, G_3 = \{1, 3, 5, 6\}$;

$(b_3)$ $G_2 = \{3, 4\}, G_3 = \{1, 2, 5, 6\}$.

Cases $(b_1), (b_2)$ and $(b_3)$ don't satisfy condition ($A_3$).

(v) If $q = 3, m = 3$, then $\#G_2 = \#G_3 = 3$.

$(c_1)$ $G_2 = \{1, 2, 3\}, G_3 = \{4, 5, 6\}$;

$(c_2)$ $G_2 = \{1, 2, 4\}, G_3 = \{3, 5, 6\}$;

$(c_3)$ $G_2 = \{1, 2, 5\}, G_3 = \{3, 4, 6\}$;

$(c_4)$ $G_2 = \{1, 2, 6\}, G_3 = \{3, 4, 5\}$;

$(c_5)$ $G_2 = \{1, 3, 4\}, G_3 = \{2, 5, 6\}$;

$(c_6)$ $G_2 = \{1, 3, 5\}, G_3 = \{2, 4, 6\}$;

$(c_7)$ $G_2 = \{1, 3, 6\}, G_3 = \{2, 4, 5\}$;

$(c_8)$ $G_2 = \{1, 4, 5\}, G_3 = \{2, 3, 6\}$;

$(c_9)$ $G_2 = \{1, 4, 6\}, G_3 = \{2, 3, 5\}$;

$(c_{10})$ $G_2 = \{1, 5, 6\}, G_3 = \{2, 3, 4\}$.

The above cases don't satisfy ($A_3$) except the case $(c_2)$ enjoying $(A_4)$. Therefore, if $q=3,m=3$ then we have the only possible partition $\{1,2,4\},\{3,5,6\}$.

(vi) If $q = 2$, then the partition is $\{\{0\}, \{1, 2, 3, 4, 5, 6\}\}$.

From the above discussions, when $n = 7$, we have the following possible partitions:
$$
\begin{cases}
\{\{0\}, \{1\}, \{2\}, \{3\}, \{4\}, \{5\}, \{6\}\};\\
\{\{0\}, \{1, 6\}, \{2, 5\}, \{3, 4\}\};\\
 \{\{0\}, \{1, 2, 4\}, \{3, 5, 6\}\};\\
 \{\{0\}, \{1, 2, 3, 4, 5, 6\}\}.
 \end{cases}
 $$

\begin{theorem}\label{rdop7m2p1}
Let $\phi$ be a finite Blaschke product of order $7$. Then $M_\phi$ is reducible on $D^2$ if and only if $\phi$ is equivalent to $z^7$.
\end{theorem}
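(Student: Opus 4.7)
The plan is to follow the blueprint of Theorem \ref{rdopo5ubp1} and do a case analysis through the four possible partitions listed in the preceding subsection. As usual we may assume $\phi(0)=0$ by passing to an equivalent Blaschke product. The tool of choice is Proposition \ref{vaiddatlms}: reducibility on $D^{2}$ is equivalent to $\dim \HL \geq 2$, and it will therefore suffice, in each non-trivial partition, to produce enough test functions $f \in D^{2}$ that force every nonzero solution $(a_{1},\dots ,a_{q})$ of the two linear systems $\sum a_{i} F_{i}(0)=0$ and $\sum \overline{a_{i}} H_{i}(0)=0$ to be trivial.

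First I would dispose of the two extreme partitions. For $\{\{0\},\{1\},\{2\},\{3\},\{4\},\{5\},\{6\}\}$, each $\rho_{i}$ is invariant under analytic continuation along loops in $\D \setminus E$, so each $\rho_{i}$ is a M\"obius transformation and $\phi$ is equivalent to $\varphi_{\alpha}^{7}$ for some $\alpha \in \D$. If $\alpha=0$, the standard monomial calculation shows that $M_{z^{7}}$ has the seven minimal reducing subspaces $N_{j}=\mathrm{span}\{z^{l}:l\equiv j\bmod 7\}$; if $\alpha\neq 0$, Corollary \ref{irddpan} forces $\dim \widetilde{\HA_{\phi}}=1$. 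For the other extreme partition $\{\{0\},\{1,2,3,4,5,6\}\}$, $M_{\phi}$ has exactly two minimal reducing subspaces on $B^{2}$, namely $M_{0}(\phi)$ and $M_{0}(\phi)^{\perp}$, so by \cite[Theorem 2.5]{Luo}, $M_{\phi}$ is irreducible on $D^{2}$.

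For the partition $\{\{0\},\{1,6\},\{2,5\},\{3,4\}\}$ note that $G_{i}=G_{i}^{-1}$ for each $i$, so $H_{i}=F_{i}$ and only one of the two constraints survives. The analysis then mimics Propositions \ref{pezfpa}, \ref{ptwpztab} and Lemma \ref{psptwpds}: using the group-like property of the $\rho_{i}$ near the boundary, the symmetric combinations $f(z)=\rho_{1}(z)+\rho_{6}(z)$, $g(z)=f(\rho_{1}(z))+f(\rho_{6}(z))$, and their multiplicative analogues extend to bounded analytic functions on $\D$, and their values at $0$ and at the branched points $\rho_{i}(0)$ produce compatibility relations on the multiset $\phi^{-1}\circ \phi(0)$. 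Running through the several ways the multiset can split between $\{\rho_{1}(0),\rho_{6}(0)\}$, $\{\rho_{2}(0),\rho_{5}(0)\}$, $\{\rho_{3}(0),\rho_{4}(0)\}$ and testing against $f=\varphi_{\beta_{1}}\cdots\varphi_{\beta_{k}}$ for judicious choices of $\beta_{i}$ (as in case (g) of Theorem \ref{rdopo5ubp1}) will force $(a_{2},a_{3},a_{4})=0$. For the partition $\{\{0\},\{1,2,4\},\{3,5,6\}\}$ we have $G_{3}=G_{2}^{-1}$, so $H_{2}=F_{3}$ and $H_{3}=F_{2}$; the two constraints read $a_{2}F_{2}(0)+a_{3}F_{3}(0)=0$ and $\overline{a_{2}}F_{3}(0)+\overline{a_{3}}F_{2}(0)=0$, and the same sort of analytic-continuation bookkeeping with $f_{1}(z)=\rho_{1}(z)\rho_{2}(z)\rho_{4}(z)$ and its iterates under the group-like action should rule out nonzero solutions.

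The main obstacle is the combinatorial complexity of the $q=3$ and $q=4$ subcases: one must enumerate the partitions of $\phi^{-1}\circ \phi(0)$ (and also $\phi^{-1}\circ \phi(\alpha)$ for branched $\alpha$) consistent with the group-like structure of $\{\rho_{0},\dots ,\rho_{6}\}$, and in each case cook up a test function $f \in D^{2}$ whose evaluations at the relevant points force $a_{i}=0$. A potentially delicate point is to show that, unlike the order-$6$ situation in Theorem \ref{rdcwohsut}, no exceptional Blaschke product of order $7$ produces a nontrivial $\HL$: this is where primality of $n=7$ enters, via Lemma \ref{dpwrarbp}, since there are no nontrivial proper subgroups of $\Z_{7}$, so $\phi$ cannot be written as a composition of lower-order Blaschke products; this rigidity is what ultimately blocks the construction of nonzero $(a_{2},\dots ,a_{q})$ and leaves $\phi$ equivalent to $z^{7}$ as the only reducible case.
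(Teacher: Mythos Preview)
Your proposal is correct and follows essentially the same strategy as the paper: dispose of the two extreme partitions exactly as you indicate, and for the partitions $\{\{0\},\{1,6\},\{2,5\},\{3,4\}\}$ and $\{\{0\},\{1,2,4\},\{3,5,6\}\}$ combine the analytic-continuation identities among symmetric functions of the $\rho_i$ (to constrain the multisets $\{\rho_i(0)\}$ and $\{\rho_i(\alpha)\}$) with test functions $f=\varphi_{\beta_1}\cdots\varphi_{\beta_k}$ in Proposition~\ref{vaiddatlms} to kill $(a_2,\dots,a_q)$. The only organizational difference is that the paper runs the outer case split over the multiplicity of the zero of $\phi$ at $0$ (the forms $z^7,\ z^6\varphi_\alpha,\ z^5\varphi_\alpha\varphi_\beta,\ \dots,\ z\varphi_{\alpha_1}\cdots\varphi_{\alpha_6}$) and the inner split over the two nontrivial partitions, rather than partition-first as you suggest; this is convenient because $\phi^{-1}\circ\phi(0)$ is then explicitly known from the start, which makes the ``running through the several ways the multiset can split'' step concrete.
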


\begin{proof}
Note that if the partition is $\{\{0\}, \{1, 2, 3, 4, 5, 6\}\}$, then by Theorem 2.5 \cite{Luo}, we have that $M_\phi$ is irreducible on $D^2$, i.e. $\dim \widetilde{\HA_\phi} = 1$. Since equivalent Blaschke products have the same reducing subspaces, we can always assume that $\phi(0) = 0$. We have the following cases.

(a) $\phi = z^7$. Then $M_\phi$ has exact $7$ minimal reducing subspaces on $D^2$:
$$
N_j = \text{span} \{z^l: l \equiv j ~\text{mod}~7\},\  j = 0, 1, \cdots, 6.
$$

(b) $\phi = z^6 \varphi_\alpha, \alpha \neq 0$.  We show that the partition is $\{\{0\}, \{1, 2, 3, 4, 5, 6\}\}$. Suppose on the contrary that the partition is not $\{\{0\}, \{1, 2, 3, 4, 5, 6\}\}$, then the partition is
$$
\{\{0\}, \{1, 6\}, \{2, 5\}, \{3, 4\}\}\ \ \hbox{or} \ \{\{0\}, \{1, 2, 4\}, \{3, 5, 6\}\}.
$$

(i) If the partition is $\{\{0\}, \{1, 6\}, \{2, 5\}, \{3, 4\}\}$, without loss of generality, suppose
$$
\{\rho_1(0), \rho_6(0)\} = \{0, 0\},\ \{\rho_2(0), \rho_5(0)\} = \{0, 0\},\  \{\rho_3(0), \rho_4(0)\} = \{0, \alpha\}.
$$
If
$$f(z) = \rho_1(z) + \rho_6(z)\quad\&\quad g(z) = f(\rho_2(z)) + f(\rho_5(z)),
$$
then
$$g(z) = f(z) + \rho_3(z) + \rho_4(z),
$$
and hence
$$g(0) = \alpha = 2 f(0) = 0,$$
this is a contradiction.

(ii) If the partition is $\{\{0\}, \{1, 2, 4\}, \{3, 5, 6\}\}$, suppose
$$
\{\rho_1(0), \rho_2(0), \rho_4(0)\} = \{0, 0, 0\},\quad \{\rho_3(0), \rho_5(0), \rho_6(0)\} = \{0, 0, \alpha\}.
$$
If
$$
\begin{cases}
f(z) = \rho_1(z) + \rho_2(z) + \rho_4(z);\\
g(z) = f(\rho_1(z)) + f(\rho_2(z)) + f(\rho_4(z)) = f(z) + 2 (\rho_3(z) + \rho_5(z) + \rho_6(z)),
\end{cases}
$$
then
$$g(0) = 2\alpha = 3f(0) = 0.$$
This is also a contradiction.

Thus the partition is $\{\{0\}, \{1, 2, 3, 4, 5, 6\}\}$, it follows that $M_\phi$ is irreducible on $D^2$.

(c) $\phi = z^5 \varphi_\alpha \varphi_\beta, \alpha \neq \beta, \alpha\beta \neq 0$. We show that $\dim \widetilde{\HA_\phi} = 1$. Suppose on the contrary that $\dim \widetilde{\HA_\phi} \neq 1$, then the partition is:
$$
\{\{0\}, \{1, 6\}, \{2, 5\}, \{3, 4\}\}\  \ \hbox{or}\  \ \{\{0\}, \{1, 2, 4\}, \{3, 5, 6\}\}.
$$

(i) If the partition is $\{\{0\}, \{1, 6\}, \{2, 5\}, \{3, 4\}\}$, then, without loss of generality, suppose $\{\rho_1(0), \rho_6(0)\} = \{0, 0\}$. If
$$
f(z) = \rho_1(z) + \rho_6(z)\ \ \&\ \ g(z) = f(\rho_1(z)) + f(\rho_6(z)) = 2z + \rho_2(z) + \rho_5(z),
$$
then
$$g(0) = \rho_2(0) + \rho_5(0) = 2 f(0) = 0.$$

If $\rho_2(0) = 0$, then
$$
\rho_5(0) = 0,\ \ \{\rho_3(0), \rho_4(0)\} = \{\alpha, \beta\},
$$
and hence by the same argument we get
$\alpha + \beta = 0$. Let
$$
f_1(z) = \rho_2(z) + \rho_5(z)\ \ \&\ \ g_1(z) = f_1(\rho_2(z)) f_1(\rho_5(z)) = (z + \rho_4(z)) (z + \rho_3(z)).
$$
Then
$$g_1(0) = - \alpha^2 = f_1(0)^2 = 0,$$
this is a contradiction.

If $\rho_2(0)  \neq 0$, then $\{\rho_2(0), \rho_5(0)\} = \{\alpha, \beta\}$, $\{\rho_3(0), \rho_4(0)\} = \{0, 0\}$, and $\alpha + \beta = 0$. Similarly, we also have a contradiction. Thus the partition $\{\{0\}, \{1, 6\}, \{2, 5\}, \{3, 4\}\}$ is impossible.

(ii) If the partition is $\{\{0\}, \{1, 2, 4\}, \{3, 5, 6\}\}$ and $\dim \widetilde{\HA_\phi} \neq 1$. Recall that
$$\HL = \text{span}\left\{(a_1, \cdots, a_q): f \in D, \sum_{i=1}^q a_i F_i(0) = 0, \sum_{i=1}^q \overline{a_i} H_i(0) = 0\right\},$$
where
$$
F_i(z) = \sum\limits_{\rho \in G_i} f(\rho(z)) \rho(z)\ \ \&\ \ H_i(z) = \sum\limits_{\rho \in G_i^{-1}} f(\rho(z)) \rho(z).
$$
By Proposition \ref{vaiddatlms}, $\dim \HL \neq 1$. Since $\rho_0(0) = 0$, it follows that for $f \in D^2$, the equation
\begin{align*}
\sum_{i=1}^3 a_i F_i(0) &= a_2 [f(\rho_1(0)) \rho_1(0) + f(\rho_2(0)) \rho_2(0)  + f(\rho_4(0))\rho_4(0)] \\
&\hspace{0.5cm}+ a_3 [f(\rho_3(0))\rho_3(0) + f(\rho_5(0)) \rho_5(0) + f(\rho_6(0)) \rho_6(0)] \notag \\
& = 0 \notag
\end{align*}
has a nontrivial solution $(a_2, a_3, a_4)$. There are essentially two cases:
$$\{\rho_1(0), \rho_2(0), \rho_4(0)\} = \{0, 0, \alpha\}\ \ \hbox{or}\ \ \{\rho_1(0), \rho_2(0), \rho_4(0)\} = \{0, 0, 0\}.
$$

If $\{\rho_1(0), \rho_2(0), \rho_4(0)\} = \{0, 0, \alpha\}$, since $\alpha \neq \beta$, we see that $a_2 = a_3 = 0$.

If $\{\rho_1(0), \rho_2(0), \rho_4(0)\} = \{0, 0, 0\}$, then $\{\rho_3(0), \rho_5(0), \rho_6(0)\} = \{0, \alpha, \beta\}$, and $a_3 = 0$. Since
\begin{align*}
\sum_{i=1}^3 a_i H_i(0) &= \overline{a_2} [f(\rho_3(0))\rho_3(0) + f(\rho_5(0)) \rho_5(0) + f(\rho_6(0)) \rho_6(0)] \\
&\hspace{0.5cm}+ \overline{a_3} [f(\rho_1(0)) \rho_1(0) + f(\rho_2(0)) \rho_2(0)  + f(\rho_4(0))\rho_4(0)] \notag \\
& = 0, \notag
\end{align*}
we get $$\overline{a_2} [f(\alpha)\alpha + f(\beta)\beta] = 0.$$
Choosing $f = \varphi_\alpha$, we have $a_2 = 0$, thus $\dim \widetilde{\HA_\phi} = 1$ - this contradicts the assumption that $\dim \widetilde{\HA_\phi} \neq 1$.

Therefore $\dim \widetilde{\HA_\phi} = 1$, and $M_\phi$ is irreducible on $D^2$.

(d) $\phi = z^5 \varphi_\alpha^2, \alpha \neq 0$. We show that $\dim \widetilde{\HA_\phi} = 1$. Suppose on the contrary that $\dim \widetilde{\HA_\phi} \neq 1$, then the partition is $$\{\{0\}, \{1, 6\}, \{2, 5\}, \{3, 4\}\}\ \ \hbox{or}\ \ \{\{0\}, \{1, 2, 4\}, \{3, 5, 6\}\}.
$$
By the same argument in (b)-(i), we see that the partition $\{\{0\}, \{1, 6\}, \{2, 5\}, \{3, 4\}\}$ is impossible.

Suppose that the partition is $\{\{0\}, \{1, 2, 4\}, \{3, 5, 6\}\}$ and $\dim \widetilde{\HA_\phi} \neq 1$. For $f \in D^2$, assume that the equation
\begin{align*}
\sum_{i=1}^3 a_i F_i(0) &= a_2 [f(\rho_1(0)) \rho_1(0) + f(\rho_2(0)) \rho_2(0)  + f(\rho_4(0))\rho_4(0)] \\
&\hspace{0.5cm}+ a_3 [f(\rho_3(0))\rho_3(0) + f(\rho_5(0)) \rho_5(0) + f(\rho_6(0)) \rho_6(0)] \notag \\
& = 0 \notag
\end{align*}
has a nontrivial solution $(a_2, a_3)$. By the same argument in (b)-(ii), we have
$$
\{\rho_1(0), \rho_2(0), \rho_4(0)\} = \{0, 0, \alpha\} = \{\rho_3(0), \rho_5(0), \rho_6(0)\}.
$$
If
$$
\begin{cases}
f_1(z) = \rho_1(z) + \rho_2(z) + \rho_4(z);\\
g(z) = f_1(\rho_1(z)) + f_1(\rho_2(z)) + f_1(\rho_4(z)) = f_1(z) + 2 (\rho_3(z) + \rho_5(z) + \rho_6(z)),
\end{cases}
$$
then
$$g(0) =  3\alpha = 2f_1(0) + f_1(\alpha),$$
thus $f_1(\alpha) = \alpha$. Since $\rho_0(\alpha) = \alpha$, we get
$$
\begin{cases}
\{\rho_1(\alpha), \rho_2(\alpha), \rho_4(\alpha)\} = \{0, 0, \alpha\};\\
\{\rho_3(\alpha), \rho_5(\alpha), \rho_6(\alpha)\} = \{0, 0, 0\}.
\end{cases}
$$
If
$$
\begin{cases}
f_2(z) = \rho_3(z) + \rho_5(z) + \rho_6(z);\\
g_1(z) = f_2(\rho_3(z)) + f_2(\rho_5(z)) + f_2(\rho_6(z)) = f_2(z) + 2f_1(z),
\end{cases}
$$
then
$$g_1(0) = f_2(0) + 2 f_1(0) = 3 \alpha = 2f_2(0) + f_2(\alpha) = 2\alpha + 0,$$
this is a contradiction.

Therefore $\dim \widetilde{\HA_\phi} = 1$, and $M_\phi$ is irreducible on $D^2$.

(e) $\phi = z^4 \varphi_\alpha \varphi_\beta \varphi_\gamma, \alpha\beta\gamma \neq 0$. We show that $\dim \widetilde{\HA_\phi} = 1$. Suppose on the contrary that $\dim \widetilde{\HA_\phi} \neq 1$, then the partition is $\{\{0\}, \{1, 6\}, \{2, 5\}, \{3, 4\}\}$ or $\{\{0\}, \{1, 2, 4\}, \{3, 5, 6\}\}$.

(i) If the partition is $\{\{0\}, \{1, 6\}, \{2, 5\}, \{3, 4\}\}$ and $\dim \widetilde{\HA_\phi} \neq 1$, then by the same argument in (b)-(i), we get $\{\rho_1(0), \rho_6(0)\}, \{\rho_2(0), \rho_5(0)\}, \{\rho_3(0), \rho_4(0)\} \neq \{0, 0\}$. Thus
$$
\begin{cases}
\{\rho_1(0), \rho_6(0)\} = \{0, \alpha\}, \{\rho_2(0),\rho_5(0)\} = \{0, \beta\},\\
\{\rho_3(0), \rho_4(0)\} = \{0, \gamma\}.
\end{cases}
$$
Since $\dim \widetilde{\HA_\phi} \neq 1$, it follows that for $f \in D^2$, the equation
\begin{align*}
\sum_{i=1}^4 a_i F_i(0) &= a_2 f(\alpha) \alpha + a_3 f(\beta)\beta + a_4 f(\gamma)\gamma = 0
\end{align*}
has a nontrivial solution $(a_2, a_3, a_4)$. Therefore $\alpha, \beta, \gamma$ can not be mutually distinct. Without loss of generality, suppose $\alpha = \beta$. If
$$
\begin{cases}
f_1(z) = \rho_1(z) + \rho_6(z);\\
g(z)= f_1(\rho_1(z)) + f_1(\rho_6(z))= 2z + \rho_2(z) + \rho_5(z);\\
g_1(z)= f_1(\rho_2(z)) + f_1(\rho_3(z)) = f_1(z) + \rho_3(z) + \rho_4(z),
\end{cases}
$$
then
$$
\begin{cases}
g(0) = \alpha = f_1(0) + f_1(\alpha);\\
g_1(0) = \alpha + \gamma = f_1(0) + f_1(\alpha),
\end{cases}
$$
and hence $\gamma = 0$ -  a contradiction. So, if the partition is $\{\{0\}, \{1, 6\}, \{2, 5\}, \{3, 4\}\}$, then $\dim \widetilde{\HA_\phi} = 1$.

(ii) If the partition is $\{\{0\}, \{1, 2, 4\}, \{3, 5, 6\}\}$ and $\dim \widetilde{\HA_\phi} \neq 1$, then by the same argument in (b)-(ii), we have $\{\rho_1(0), \rho_2(0), \rho_4(0)\} = \{0, 0, 0\}$ is impossible. Thus $\{\rho_1(0), \rho_2(0), \rho_4(0)\} = \{0, 0, \alpha\}$, $\{\rho_3(0), \rho_5(0), \rho_6(0)\} = \{0, \beta, \gamma\}$. Since the equation
\begin{align*}
\sum_{i=1}^3 a_i F_i(0) &= a_2 f(\alpha)\alpha + a_3[f(\beta)\beta + f(\gamma)\gamma] = 0\quad\hbox{under}\quad f\in D^2
\end{align*}
has a nontrivial solution $(a_2, a_3)$, we get $\alpha = \beta = \gamma$, and $a_2 = - 2a_3$. Also for $f \in D^2$, the equation
\begin{align*}
\sum_{i=1}^3 \overline{a_i} H_i(0) &= \overline{a_2}2f(\alpha)\alpha  + \overline{a_3} f(\alpha) = 0
\end{align*}
is satisfied, hence $\overline{a_3} = -2\overline{a_2}$, so $a_2 = a_3 = 0$ contradicting the assumption $\dim \widetilde{\HA_\phi} \neq 1$.

Thus if the partition is $\{\{0\}, \{1, 2, 4\}, \{3, 5, 6\}\}$, then $\dim \widetilde{\HA_\phi} = 1$. By (i) and (ii), we have $\dim \widetilde{\HA_\phi} = 1$.

(f) $\phi = z^3 \varphi_{\alpha_1} \varphi_{\alpha_2} \varphi_{\alpha_3}\varphi_{\alpha_4}, \alpha_1\alpha_2\alpha_3\alpha_4 \neq 0$. We show that $\dim \widetilde{\HA_\phi} = 1$. Suppose on the contrary that $\dim \widetilde{\HA_\phi} \neq 1$, then the partition is
$$
\{\{0\}, \{1, 6\}, \{2, 5\}, \{3, 4\}\}\ \ \hbox{or}\ \ \{\{0\}, \{1, 2, 4\}, \{3, 5, 6\}\}.
$$

(i) If the partition is $\{\{0\}, \{1, 6\}, \{2, 5\}, \{3, 4\}\}$ and $\dim \widetilde{\HA_\phi} \neq 1$, then by the same argument in (b)-(i), we have $$\{\rho_1(0), \rho_6(0)\}, \{\rho_2(0), \rho_5(0)\}, \{\rho_3(0), \rho_4(0)\} \neq \{0, 0\}.
$$
Thus
$$
\begin{cases}
\{\rho_1(0), \rho_6(0)\} = \{0, \alpha_1\};\\
\{\rho_2(0), \rho_5(0)\} = \{0, \alpha_2\};\\
\{\rho_3(0), \rho_4(0)\} = \{\alpha_3, \alpha_4\}.
\end{cases}
$$
Next, we show that $\alpha_3 = \alpha_4$. If $\alpha_3 \neq \alpha_4$, since $\dim \widetilde{\HA_\phi} \neq 1$, it follows that for $f \in D^2$, the equation
\begin{align*}
\sum_{i=1}^4 a_i F_i(0) &= a_2 f(\alpha_1) \alpha_1 + a_3 f(\alpha_2)\alpha_2 + a_4 [f(\alpha_3)\alpha_3 + f(\alpha_4)\alpha_4] = 0 \notag
\end{align*}
has a nontrivial solution $(a_2, a_3, a_4)$. Thus $\alpha_1 = \alpha_2$ or $\{\alpha_1, \alpha_2\} = \{\alpha_3, \alpha_4\}$.

If $\alpha_1 = \alpha_2: = \alpha$, and
$$
\begin{cases}
f_1(z) = \rho_1(z) + \rho_6(z);\\
g(z) = f_1(\rho_2(z)) + f_1(\rho_5(z)) = f_1(z) + \rho_3(z) + \rho_4(z);\\
g_1(z) = f_1(\rho_1(z)) + f_1(\rho_6(z)) = 2z + \rho_2(z) + \rho_5(z),
\end{cases}
$$
then
$$
\begin{cases}
g(0) = \alpha + \alpha_3 + \alpha_4 = f_1(0) + f_1(\alpha);\\
g_1(0) = \alpha = f_1(0) + f_1(\alpha),
\end{cases}
$$
and hence $\alpha_3 + \alpha_4 = 0$. If
$$
\begin{cases}
f_2(z) = \rho_2(z) + \rho_5(z);\\
g_2(z) = f_2(\rho_1(z)) + f_2(\rho_6(z)) = f_1(z) + \rho_3(z) + \rho_4(z);\\
g_3(z) = f_2(\rho_2(z)) f_2(\rho_5(z)) = (z + \rho_4(z))(z + \rho_3(z)),
\end{cases}
$$
then
$$
\begin{cases}
g_2(0) = \alpha = f_2(0) + f_2(\alpha) = \alpha + f_2(\alpha);\\
g_3(0) = -\alpha_3^2 = f_2(0)f_2(\alpha),
\end{cases}
$$
but this is a contradiction.

If $\{\alpha_1, \alpha_2\} = \{\alpha_3, \alpha_4\}$, then setting
$$
\begin{cases}
f_3(z) = \rho_3(z) + \rho_4(z);\\
g_4(z) = f_3(\rho_3(z)) + f_3(\rho_4(z)) = 2 z + \rho_1(z) + \rho_6(z);\\
g_5(z) = f_3(\rho_1(z)) + f_3(\rho_6(z)) = \rho_4(z) + \rho_5(z) + \rho_2(z) + \rho_3(z);\\
g_6(z) = f_3(\rho_2(z)) + f_3(\rho_5(z)) = \rho_5(z) + \rho_6(z) + \rho_1(z) + \rho_2(z),
\end{cases}
$$
gives
$$
\begin{cases}
g_4(0) = \alpha_1 = f_3(\alpha_3) + f_3(\alpha_4) = f_3(\alpha_1) + f_3(\alpha_2);\\
g_5(0) = \alpha_2 + \alpha_3 + \alpha_4 = f_3(0) + f_3(\alpha_1) = \alpha_3 + \alpha_4 + f_3(\alpha_1);\\
g_6(0) = \alpha_1 + \alpha_2 = f_3(0) + f_3(\alpha_2) = \alpha_3 + \alpha_4 + f_3(\alpha_2).
\end{cases}
$$
Hence
$$
f_3(\alpha_1) + f_3(\alpha_2) = \alpha_1 = \alpha_2,
$$
which yields a contradiction when $\alpha_1 = \alpha_2$. So $\alpha_3 = \alpha_4: = \alpha$.

Since for $f \in D^2$ the equation
\begin{align*}
\sum_{i=1}^4 a_i F_i(0) &= a_2 f(\alpha_1)\alpha_1 + a_3f(\alpha_2)\alpha_2  + a_42f(\alpha)\alpha = 0
\end{align*}
has a nontrivial solution $(a_2, a_3, a_4)$, we obtain that $\alpha_1, \alpha_2, \alpha$ are not mutually distinct.

If $\alpha_1 = \alpha_2$, we have shown that this is impossible.

If $\alpha_1 = \alpha$, let $f_1, g_1$ be defined as above, i.e.,
$$
\begin{cases}
f_1(z) = \rho_1(z) + \rho_6(z), g_1(z) = f_1(\rho_1(z)) + f_1(\rho_6(z));\\
g_7(z) = f_1(\rho_3(z)) + f_1(\rho_4(z)) = \rho_4(z) + \rho_5(z) + \rho_2(z) + \rho_3(z),
\end{cases}
$$
then
$$
\begin{cases}
g_1(0) = \alpha_2 = f_1(0) + f_1(\alpha_1) = \alpha_1 + f_1(\alpha_1);\\
g_7(0) = \alpha_2 + 2\alpha_1 = 2f_1(\alpha_1).
\end{cases}
$$
Hence $\alpha_2 = 4 \alpha_1$, $f_1(\alpha_1) = 3 \alpha_1$, this is impossible.

Similarly, $\alpha_2 = \alpha$ is also impossible. Thus if the partition is $\{\{0\}, \{1, 6\}, \{2, 5\}, \{3, 4\}\}$, then $\dim \widetilde{\HA_\phi} = 1$.

(ii) If the partition is $\{\{0\}, \{1, 2, 4\}, \{3, 5, 6\}\}$ and $\dim \widetilde{\HA_\phi} \neq 1$, there are essentially two cases: $\{\rho_1(0), \rho_2(0), \rho_4(0)\} = \{0, 0, \alpha_1\}$ or $\{\rho_1(0), \rho_2(0), \rho_4(0)\} = \{0, \alpha_1, \alpha_2\}$.

If
$$
\begin{cases}\{\rho_1(0), \rho_2(0), \rho_4(0)\} = \{0, 0, \alpha_1\};\\
\{\rho_3(0), \rho_5(0), \rho_6(0)\} = \{\alpha_2, \alpha_3, \alpha_4\},
\end{cases}
$$
then application of the fact that for $f \in D^2$ the equation
\begin{align*}
\sum_{i=1}^3 a_i F_i(0) &= a_2 f(\alpha_1)\alpha_1 + a_3[f(\alpha_2)\alpha_2 + f(\alpha_3)\alpha_3 + f(\alpha_4)\alpha_4] = 0
\end{align*}
has a nontrivial solution $(a_2, a_3)$, derives
$$
\alpha_1 = \alpha_2 = \alpha_3 = \alpha_4\quad\&\quad a_2 = -3a_3.
$$
Also for $f \in D^2$ the equation
\begin{align*}
\sum_{i=1}^3 \overline{a_i} H_i(0) &= \overline{a_2} 3f(\alpha_1)\alpha_1 + \overline{a_3} f(\alpha_1)\alpha_1= 0
\end{align*}
is satisfied, thus
$$
\overline{a_3} = -3 \overline{a_2}\ \ \&\ \ a_2 = a_3 = 0,
$$
this is against the assumption $\dim \widetilde{\HA_\phi} \neq 1$.

If
$$
\begin{cases}
\{\rho_1(0), \rho_2(0), \rho_4(0)\} = \{0, \alpha_1, \alpha_2\};\\
\{\rho_3(0), \rho_5(0), \rho_6(0)\} = \{0, \alpha_3, \alpha_4\},
\end{cases}
$$
then using $\dim \widetilde{\HA_\phi} \neq 1$ we get
$$
\{\alpha_1, \alpha_2\} = \{\alpha_3, \alpha_4\}.
$$
Let
$$
\begin{cases}
f_1(z) = \rho_1(z) + \rho_2(z) + \rho_4(z);\\
f_2(z) = \rho_3(z) + \rho_5(z) + \rho_6(z);\\
g(z) = f_1(\rho_1(z)) + f_1(\rho_2(z)) + f_1(\rho_4(z)) = f_1(z) + 2 f_2(z);\\
g_1(z) = f_1(\rho_3(z)) + f_1(\rho_5(z)) + f_1(\rho_6(z)) = 3z + f_1(z) + f_2(z).
\end{cases}
$$
Then
$$
\begin{cases}
g(0) = f_1(0) + 2(\alpha_1 + \alpha_2) = f_1(0) + f_1(\alpha_1) + f_1(\alpha_2);\\
g_1(0) = f_1(0) + \alpha_1 + \alpha_2 = f_1(0) + f_1(\alpha_1) + f_1(\alpha_2).
\end{cases}
$$
Accordingly,
$$
\begin{cases}
\alpha_1 + \alpha_2 = 0;\\
f_1(\alpha_1) + f_1(\alpha_2) = 0;\\
f_1(-\alpha_1) = - f_1(\alpha_1).
\end{cases}
$$
Note that
$$
\phi^{-1}\circ \phi (\alpha_1) = \{0, 0, 0, \alpha_1, \alpha_1, - \alpha_1, -\alpha_1\}.
$$
So we have the following six cases:
$$
\{\rho_1(\alpha_1), \rho_2(\alpha_1), \rho_4(\alpha_1)\} =\begin{cases} \{0, 0, 0\};\\
\{0, 0, \alpha_1\};\\
\{0, 0, -\alpha_1\};\\
\{0, \alpha_1, -\alpha_1\};\\
\{0, -\alpha_1, -\alpha_1\};\\
\{\alpha_1, -\alpha_1, \alpha_1\}.
\end{cases}
$$

If
$$
\{\rho_1(\alpha_1), \rho_2(\alpha_1), \rho_4(\alpha_1)\} = \{0, 0, 0\},
$$
then
$$
\{\rho_3(\alpha_1), \rho_5(\alpha_1), \rho_6(\alpha_1)\} = \{\alpha_1, -\alpha_1, -\alpha_1\},
$$
and hence $g(\alpha_1) = 0 - 2\alpha_1 = 3f(0) = 0$ - this is a contradiction. Similarly, using $g(\alpha_1)$ and $f_1(-\alpha_1) = - f_1(\alpha_1)$, we get contradictions for the rest cases.

Therefore if the partition is $\{\{0\}, \{1, 2, 4\}, \{3, 5, 6\}\}$, then $\dim \widetilde{\HA_\phi} = 1$. By (i) and (ii), we have $\dim \widetilde{\HA_\phi} = 1$.

(g) $\phi = z^2 \varphi_{\alpha_1} \varphi_{\alpha_2} \varphi_{\alpha_3}\varphi_{\alpha_4}\varphi_{\alpha_5}, \alpha_1\alpha_2\alpha_3\alpha_4\alpha_5 \neq 0$. We show that $\dim \widetilde{\HA_\phi} = 1$. Suppose on the contrary that $\dim \widetilde{\HA_\phi} \neq 1$, then the partition is
$$\{\{0\}, \{1, 6\}, \{2, 5\}, \{3, 4\}\}\ \ \hbox{or}\ \ \{\{0\}, \{1, 2, 4\}, \{3, 5, 6\}\}.
$$

(i) If the partition is $\{\{0\}, \{1, 6\}, \{2, 5\}, \{3, 4\}\}$ and $\dim \widetilde{\HA_\phi} \neq 1$, suppose
$$
\begin{cases}
\{\rho_1(0), \rho_6(0)\} = \{0, \alpha_1\};\\
\{\rho_2(0), \rho_5(0)\} = \{\alpha_2, \alpha_3\};\\
\{\rho_3(0), \rho_4(0)\} = \{\alpha_4, \alpha_5\},
\end{cases}
$$
we show that $\{\alpha_2, \alpha_3\} \neq \{\alpha_4, \alpha_5\}$. If $\{\alpha_2, \alpha_3\} = \{\alpha_4, \alpha_5\}$, then setting
$$
\begin{cases}
f_1(z) = \rho_2(z) + \rho_5(z);\\
g(z) = f_1(\rho_3(z)) + f_1(\rho_4(z)) = f_1(z) + \rho_1(z) + \rho_6(z);\\
g_1(z) = f_1(\rho_2(z)) + f_1(\rho_5(z)) = 2z + \rho_3(z) + \rho_4(z)
\end{cases}
$$
gives
$$
\begin{cases}
g(0) = \alpha_2 + \alpha_3 + \alpha_1 = f_1(\alpha_2) + f_1(\alpha_3);\\
g_1(0) = \alpha_2 + \alpha_3 = f_1(\alpha_2) + f_1(\alpha_3),
\end{cases}
$$
hence $\alpha_1 = 0$ - this is a contradiction. So $\{\alpha_2, \alpha_3\} \neq \{\alpha_4, \alpha_5\}$.

Now we show $\alpha_2 \neq \alpha_3$. If $\alpha_2 = \alpha_3: = \alpha$, without loss of generality, suppose $\alpha_4 \neq \alpha$. We show $\alpha_5 \neq \alpha$. If $\alpha_5 = \alpha$, since $\dim \widetilde{\HA_\phi} \neq 1$, it follows that for $f \in D^2$, the equation
\begin{align*}
\sum_{i=1}^4 a_i F_i(0) &= a_2 f(\alpha_1) \alpha_1 + a_3 2f(\alpha)\alpha + a_4 [f(\alpha_4)\alpha_4 + f(\alpha_5)\alpha_5] = 0
\end{align*}
has a nontrivial solution $(a_2, a_3, a_4)$. So $\alpha_1 = \alpha_4$ or $\alpha_1 = \alpha$. If $\alpha_1 = \alpha_4$, then letting
$$
\begin{cases}
f_2(z) = \rho_3(z) + \rho_4(z);\\
g_2(z) = f_2(\rho_3(z)) + f_2(\rho_4(z)) = 2z + \rho_1(z) + \rho_6(z);\\
g_3(z) = f_2(\rho_1(z)) + f_2(\rho_6(z)) = \rho_4(z) + \rho_5(z) + \rho_2(z) + \rho_3(z);\\
g_4(z) = f_2(\rho_2(z)) + f_2(\rho_5(z)) = \rho_5(z) + \rho_6(z) + \rho_1(z) + \rho_2(z),
\end{cases}
$$
derives
$$
\begin{cases}
g_2(0) = \alpha_1 = f_2(\alpha_1) + f_2(\alpha);\\
g_3(0) = 3\alpha + \alpha_1 = f_2(0) + f_2(\alpha_1) = \alpha_1 + \alpha + f_2(\alpha_1);\\
g_4(0) = \alpha_1 + 2\alpha = 2f_2(\alpha),
\end{cases}
$$
and consequently,
$$
\begin{cases}
f_2(\alpha_1) = 2\alpha;\\
f_2(\alpha) = \alpha_1 - 2\alpha;\\
\alpha_1 = 6\alpha.
\end{cases}
$$
Note that
$$
\phi^{-1}\circ\phi(\alpha) = \{0, 0, \alpha_1, \alpha_1, \alpha, \alpha, \alpha\}.
$$
So we have
$$
f_2(\alpha) = 0, \alpha_1, \alpha, 2\alpha_1, 2\alpha, \alpha_1 + \alpha.
$$
But none of these satisfies
$$
f_2(\alpha) = \alpha_1 - 2\alpha\ \ \&\ \ \alpha_1 = 6\alpha.
$$
This is a contradiction.

If $\alpha_1 = \alpha$, then by the argument below, we also have a contradiction.

Thus $\alpha_5 \neq \alpha$, then by $\dim \widetilde{\HA_\phi} \neq 1$, we get $\alpha_1 = \alpha$ or $\alpha_1 = \alpha_4 = \alpha_5$. Without loss of generality, suppose $\alpha_1 = \alpha$. Let
$$
\begin{cases}
f_3(z) = \rho_2(z) + \rho_5(z);\\
g_5(z) = f_3(\rho_2(z)) + f_3(\rho_5(z)) = 2z + \rho_3(z) + \rho_4(z);\\
g_6(z) = f_3(\rho_1(z)) + f_3(\rho_6(z)) = \rho_1(z) + \rho_6(z) + \rho_3(z) + \rho_4(z).
\end{cases}
$$
Then
$$
\begin{cases}
g_5(0) = \alpha_4 + \alpha_5 = 2f_3(\alpha);\\
g_6(0) = \alpha_4 + \alpha_5 + \alpha = f_3(0) + f_3(\alpha) = 2\alpha + f_3(\alpha);\\
f_3(\alpha) = \alpha;\\
\alpha_4 + \alpha_5 = 2\alpha.
\end{cases}
$$
Let
$$
\begin{cases}
f_4(z) = \rho_1(z) + \rho_6(z);\\
g_7(z) = f_4(\rho_1(z)) + f_4(\rho_6(z)) = 2z + \rho_2(z) + \rho_5(z);\\
g_8(z) = f_4(\rho_2(z)) + f_4(\rho_5(z)) = \rho_1(z) + \rho_6(z) + \rho_3(z) + \rho_4(z).
\end{cases}
$$
Then
$$
\begin{cases}
g_7(0) = 2 \alpha = f_4(0) + f_3(\alpha) = \alpha + f_4(\alpha);\\
g_8(0) = \alpha_4 + \alpha_5 + \alpha = 2 f_4(\alpha);\\
f_4(\alpha) = \alpha;\\
\alpha_4 + \alpha_5 = \alpha.
\end{cases}
$$
However the last equations contain a contradiction. Therefore $\alpha_2 \neq \alpha_3$.

We have shown $\{\alpha_2, \alpha_3\} \neq \{\alpha_4, \alpha_5\}$, without loss of generality, suppose $\alpha_4 \not\in \{\alpha_2, \alpha_3\}$. Since $\dim \widetilde{\HA_\phi} \neq 1$, it follows that for $f \in D^2$, the equation
\begin{equation}\label{szs25e}
\sum_{i=1}^4 a_i F_i(0)= a_2 f(\alpha_1) \alpha_1 + a_3 [f(\alpha_2)\alpha_2 + f(\alpha_3)\alpha_3] + a_4 [f(\alpha_4)\alpha_4 + f(\alpha_5)\alpha_5]= 0
\end{equation}
has a nontrivial solution $(a_2, a_3, a_4)$. Next, we prove $\alpha_1 \neq \alpha_4$. If not, then using the fact that $\alpha_1 = \alpha_4 = \alpha_5$ is impossible we get $\alpha_5\not=\alpha_1$. Upon choosing $f = \varphi_{\alpha_2}\varphi_{\alpha_3}\varphi_{\alpha_5}$ in (\ref{szs25e}), we have
$$a_2f(\alpha_1) \alpha_1 + a_4 f(\alpha_1) \alpha_1 = 0,$$
whence $a_2 = -a_4$, and so (\ref{szs25e}) becomes
$$a_3 [f(\alpha_2)\alpha_2 + f(\alpha_3)\alpha_3] + a_4 f(\alpha_5)\alpha_5 = 0.$$
Thus $a_3 = a_4 = 0$, so $a_2 = a_3 = a_4 = 0$ - this is a contradiction. Therefore $\alpha_1 \neq \alpha_4$, similarly, $\alpha_1 \neq \alpha_5$. If $\alpha_4 = \alpha_5$, then taking $f = \varphi_{\alpha_1}\varphi_{\alpha_2}\varphi_{\alpha_3}$ in (\ref{szs25e}) yields $a_4 = 0$, then $a_2 = a_3 = 0$, a contradiction. If $\alpha_4 \neq \alpha_5$, then choosing $f = \varphi_{\alpha_1}\varphi_{\alpha_2}\varphi_{\alpha_3}\varphi_{\alpha_5}$ in (\ref{szs25e}) gives $a_4 = 0$, and then $a_2 = a_3 = 0$, a contradiction.

Therefore if $\{\alpha_2, \alpha_3\} \neq \{\alpha_4, \alpha_5\}$, we have a contradiction. So if the partition is
$$
\{\{0\}, \{1, 6\}, \{2, 5\}, \{3, 4\}\},
$$
then $\dim \widetilde{\HA_\phi} = 1$.

(ii) If the partition is
$\{\{0\}, \{1, 2, 4\}, \{3, 5, 6\}\}$ and $\dim \widetilde{\HA_\phi} \neq 1,
$
suppose
$$
\begin{cases}
\{\rho_1(0), \rho_2(0), \rho_4(0)\} = \{0, \alpha_1, \alpha_2\};\\
\{\rho_3(0), \rho_5(0), \rho_6(0)\} = \{\alpha_3, \alpha_4, \alpha_5\}.
\end{cases}
$$
 We show $\alpha_1 = \alpha_2$. Since $\dim \widetilde{\HA_\phi} \neq 1$, for $f \in D^2$ the equation
\begin{align*}
\sum_{i=1}^3 a_i F_i(0) &= a_2 [f(\alpha_1) \alpha_1 + f(\alpha_2) \alpha_2] + a_3 [f(\alpha_3) \alpha_3 + f(\alpha_4)\alpha_4 + f(\alpha_5)\alpha_5] = 0,
\end{align*}
has a nontrivial solution $(a_2, a_3)$. If $\alpha_1 \neq \alpha_2$, then it is straightforward to deduce $a_2 = a_3 = 0$ - this is a contradiction.

Thus $\alpha_1 = \alpha_2$. Since for $f \in D^2$ both
\begin{align*}
\sum_{i=1}^3 a_i F_i(0) &= a_2 [f(\alpha_1) \alpha_1 + f(\alpha_2) \alpha_2] + a_3 [f(\alpha_3) \alpha_3 + f(\alpha_4)\alpha_4 + f(\alpha_5)\alpha_5] = 0
\end{align*}
and
\begin{align*}
\sum_{i=1}^3 \overline{a_i} H_i(0) &= \overline{a_2}[f(\alpha_3) \alpha_3 + f(\alpha_4)\alpha_4 + f(\alpha_5)\alpha_5] + \overline{a_3} [f(\alpha_1) \alpha_1 + f(\alpha_2) \alpha_2]  = 0
\end{align*}
have a nontrivial solution $(a_2, a_3)$, by the same argument in (e)-(ii) we get $$\alpha_1 = \alpha_2 = \alpha_3 = \alpha_4 = \alpha_5,
$$
thereby finding $a_2 = a_3 = 0$, a contradiction.

Therefore if the partition is $\{\{0\}, \{1, 2, 4\}, \{3, 5, 6\}\}$, then $\dim \widetilde{\HA_\phi} = 1$. By (i) and (ii), we have $\dim \widetilde{\HA_\phi} = 1$.

(h) $\phi = z \varphi_{\alpha_1} \varphi_{\alpha_2} \varphi_{\alpha_3}\varphi_{\alpha_4}\varphi_{\alpha_5}\varphi_{\alpha_6}, \alpha_1\alpha_2\alpha_3\alpha_4\alpha_5\alpha_6 \neq 0$. We have the following cases.

(i) If the partition is $\{\{0\}, \{1\}, \{2\}, \{3\}, \{4\}, \{5\}, \{6\}\}$ or $\{\{0\}, \{1, 2, 3, 4, 5, 6\}\}$, then by the argument in Theorem \ref{rdopo5ubp1}, we have $M_\phi$ is irreducible on $D^2$.

(ii) If the partition is $\{\{0\}, \{1, 6\}, \{2, 5\}, \{3, 4\}\}$, we show $\dim \widetilde{\HA_\phi} = 1$. If $\dim \widetilde{\HA_\phi} \neq 1$, suppose
$$
\begin{cases}
\{\rho_1(0), \rho_6(0)\} = \{\alpha_1, \alpha_2\};\\
\{\rho_2(0), \rho_5(0)\} = \{\alpha_3, \alpha_4\};\\
\{\rho_3(0), \rho_4(0)\} = \{\alpha_5, \alpha_6\}.
\end{cases}
$$
We show that $\{\alpha_1, \alpha_2\}, \{\alpha_3, \alpha_4\}, \{\alpha_5, \alpha_6\}$ are mutually distinct. If $\{\alpha_1, \alpha_2\} = \{\alpha_3, \alpha_4\}$, then letting
$$
\begin{cases}
f_1(z) = \rho_1(z) \rho_6(z);\\
g(z) = f_1(\rho_1(z)) f_1(\rho_6(z)) = z^2 \rho_2(z) \rho_5(z);\\
g_1(z) = f_1(\rho_2(z)) f_1(\rho_5(z)) = \rho_1(z) \rho_6(z) \rho_3(z) \rho_4(z),
\end{cases}
$$
gives
$$
\begin{cases}
g(0) = 0 = f_1(\alpha_1) f_1(\alpha_2);\\
g_1(0) = \alpha_1 \alpha_2 \alpha_5 \alpha_6 = f_1(\alpha_1) f_1(\alpha_2),
\end{cases}
$$
thereby yielding a contradiction. Thus $\{\alpha_1, \alpha_2\}, \{\alpha_3, \alpha_4\}$ are distinct. Similarly, we have $\{\alpha_1, \alpha_2\}, \{\alpha_3, \alpha_4\}, \{\alpha_5, \alpha_6\}$ are mutually distinct.

Now we show that not both $\alpha_1$ and $\alpha_2$ are in $\{\alpha_3, \alpha_4, \alpha_5, \alpha_6\}$. Let
$$
\begin{cases}
f_2(z) = \rho_1(z) \rho_6(z);\\
g_2(z) = f_2(\rho_1(z)) f_2(\rho_6(z)) = z^2 \rho_2(z) \rho_5(z);\\
g_3(z) = f_2(\rho_3(z)) f_2(\rho_4(z)) = \rho_2(z) \rho_3(z) \rho_4(z) \rho_5(z);\\
g_4(z) = f_2(\rho_2(z)) f_2(\rho_5(z)) = \rho_1(z) \rho_3(z) \rho_4(z) \rho_6(z).
\end{cases}
$$
Then
$$
\begin{cases}
g_2(0) = 0 = f_2(\alpha_1) f_2(\alpha_2);\\
g_3(0) = \alpha_3 \alpha_4 \alpha_5 \alpha_6 = f_2(\alpha_5) f_2(\alpha_6);\\
g_4(0) = \alpha_1 \alpha_2 \alpha_5 \alpha_6 = f_2(\alpha_3) f_2(\alpha_4).
\end{cases}
$$
it follows that either $\alpha_1$ or $\alpha_2$ is not in $\{\alpha_3, \alpha_4, \alpha_5, \alpha_6\}$.

If $\alpha_1 = \alpha_2$, then choosing $f = \varphi_{\alpha_3}\varphi_{\alpha_4}\varphi_{\alpha_5}\varphi_{\alpha_6}$ in the following equation
\begin{align*}
\sum_{i=1}^4 a_i F_i(0)= a_2 2f(\alpha_1) \alpha_1 + a_3 [f(\alpha_3)\alpha_3 + f(\alpha_4)\alpha_4] + a_4 [f(\alpha_5)\alpha_5 + f(\alpha_6)\alpha_6] = 0\notag
\end{align*}
we get $a_2 = 0$, since $\{\alpha_3, \alpha_4\} \neq \{\alpha_5, \alpha_6\}$, we obtain $a_3 = a_4 = 0$ - this is a contradiction.

If $\alpha_1 \neq \alpha_2$, without loss of generality, suppose $\alpha_1 \not\in \{\alpha_3, \alpha_4, \alpha_5, \alpha_6\}$, then taking
$$f = \varphi_{\alpha_2}\varphi_{\alpha_3}\varphi_{\alpha_4}\varphi_{\alpha_5}\varphi_{\alpha_6}$$ in the equation
\begin{align*}
a_2 [f(\alpha_1) \alpha_1 + f(\alpha_2) \alpha_2] + a_3 [f(\alpha_3)\alpha_3 + f(\alpha_4)\alpha_4] + a_4 [f(\alpha_5)\alpha_5 + f(\alpha_6)\alpha_6] = 0,
\end{align*}
we get $a_2 = 0$, it follows that $a_3 = a_4 = 0$ - this is also a contradiction. Thus if the partition is $\{\{0\}, \{1, 6\}, \{2, 5\}, \{3, 4\}\}$, then $\dim \widetilde{\HA_\phi} = 1$.

(iii) If the partition is $\{\{0\}, \{1, 2, 4\}, \{3, 5, 6\}\}$, we show $\dim \widetilde{\HA_\phi} = 1$. If $\dim \widetilde{\HA_\phi} \neq 1$, suppose
$$
\begin{cases}
\{\rho_1(0), \rho_2(0), \rho_4(0)\} = \{\alpha_1, \alpha_2, \alpha_3\};\\
\{\rho_3(0), \rho_5(0), \rho_6(0)\} = \{\alpha_4, \alpha_5, \alpha_6\}.
\end{cases}
$$
 Since $\dim \widetilde{\HA_\phi} \neq 1$, for $f \in D^2$ the equation
\begin{align*}
\sum_{i=1}^3 a_i F_i(0) &= a_2 [f(\alpha_1) \alpha_1 + f(\alpha_2) \alpha_2 + f(\alpha_3) \alpha_3] + a_3 [f(\alpha_4) \alpha_4 + f(\alpha_4)\alpha_4 + f(\alpha_5)\alpha_5] = 0
\end{align*}
has a nontrivial solution $(a_2, a_3)$. Hence $\{\alpha_1, \alpha_2, \alpha_3\} = \{\alpha_4, \alpha_5, \alpha_6\}$. Upon setting
$$
\begin{cases}
f_1(z) = \rho_1(z) \rho_2(z) \rho_4(z);\\
g(z) = f_1(\rho_1(z)) f_1(\rho_2(z)) f_1(\rho_4(z)) = f_1(z) (\rho_3(z) \rho_5(z) \rho_6(z))^2;\\
g_1(z) = f_1(\rho_3(z)) f_1(\rho_5(z)) f_1(\rho_6(z)) = z^3 f_1(z) \rho_3(z) \rho_5(z) \rho_6(z),
\end{cases}
$$
we get
$$
\begin{cases}
g(0) = (\alpha_1 \alpha_2 \alpha_3)^3 = f_1(\alpha_1) f_1(\alpha_2) f_1(\alpha_3);\\
g_1(0) = 0 = f_1(\alpha_1) f_1(\alpha_2) f_1(\alpha_3),
\end{cases}
$$
thereby reaching a contradiction. Thus, if the partition is $\{\{0\}, \{1, 2, 4\}, \{3, 5, 6\}\}$, then $\dim \widetilde{\HA_\phi} = 1$. By (i), (ii) and (iii), we have $\dim \widetilde{\HA_\phi} = 1$. The proof is complete.
\end{proof}

\section{Concluding remarks}\label{s6}

On the one hand, we have seen from \S \ref{s3} \& \S \ref{s5} that if the order $n$ of $\phi$ is $5$ or $7$ then $M_\phi$ is reducible on $D^2$ if and only if $\phi$ is equivalent to $z^n$. This naturally leads to such a question that if the order of $\phi$ is prime then is it true that $M_\phi$ is reducible on $D^2$ if and only if $\phi$ is equivalent to $z^n$?

On the other hand, \S 4 reveals that if the order $n$ of $\phi$ is $6$ then $M_{\phi}$ is reducible on $D^{2}$ not only when $\phi$ is equivalent to
$z^{6}$ but also when $\phi$ is equivalent to many other functions. We believe the arguments in \S \ref{s4} can go through to the case when the order $n$ of $\phi$ is not prime, at least when $n$ is 8, 9, 10 or 12, since we know the partitions for $\phi$ when $n \leq 5$. When $n$ is 8, we find two comments on the reducing subspaces of $M_\phi$ on $B^2$ as follows.

\begin{itemize}
\item[(i)] The possible partitions $\{G_1, G_2, \cdots, G_q\}$ are discussed in Theorem 4.2 \cite{DPW12}. However, there is an error in item (2) of Theorem 4.2 \cite{DPW12}: when $\phi = \varphi_\alpha^2(\varphi_\beta^4), \alpha \neq 0$, the authors claimed that the partition for $\phi$ is
$$
\{\{0\}, \{1, 5\}, \{2\}, \{3, 7\}, \{4\}, \{6\}\}.
$$
Nevertheless, we point out that if $\phi = \varphi_\alpha^2(\varphi_\beta^4), \alpha \neq 0$ then the partition should be $\{\{0\}, \{1, 3, 5, 7\}, \{2\}, \{4\}, \{6\}\}$. Because we have
$$
\rho_{2j}(z) = \varphi_\beta(\omega_4^j\varphi_\beta(z)), j = 0, 1, 2, 3,
$$
where $\omega_4 = i$, and $\rho_k(z), k = 1, 3, 5, 7$ are solutions for $\varphi_\alpha(w^4) = - \varphi_\alpha(\varphi_\beta^4(z))$, in this case the Riemann surface $S_\phi$ has five connected components, and so the partition is $$\{\{0\}, \{1, 3, 5, 7\}, \{2\}, \{4\}, \{6\}\}.
$$
In fact, the dual partition for
$$
\{\{0\}, \{1, 3, 5, 7\}, \{2\}, \{4\}, \{6\}\}
$$
is
$$\{\{0\}, \{1, 5\}, \{2, 6\}, \{3, 7\}, \{4\}\}.
$$
Thus $$\{\{0\}, \{1, 3, 5, 7\},\{2\}, \{4\}, \{6\}\}$$ is also a possible partition for $\phi = \varphi_\alpha^2(\varphi_\beta^4), \alpha \neq 0$. And by the above reasoning, this one is the partition for $\phi = \varphi_\alpha^2(\varphi_\beta^4), \alpha \neq 0$.

\item[(ii)] When $\phi = \varphi_\beta^2(\varphi_\alpha^2(z^2))), \alpha \neq 0$, there are two cases. If $\beta \neq 0$, then by a little calculation, we obtain that the Riemann surface $S_\phi$ has four connected components, thus the partition is $\{\{0\}, \{1, 3, 5, 7\}, \{2, 6\}, \{4\}\}$. If $\beta  = 0$, Note that
\begin{align*}
\phi(w) - \phi(z)&= -\varphi_\alpha^4(w^2) + \varphi_\alpha^4(z^2)\\
& = -\prod_{j=0}^3[\varphi_\alpha(w^2) - i^j\varphi_\alpha(z^2)]\\
& = \frac{1 - |\alpha|^2}{(1- \overline{\alpha}w^2)(1- \overline{\alpha}z^2)}(w - z) (w+z) \prod_{j=1}^3[\varphi_\alpha(w^2) - i^j\varphi_\alpha(z^2)].
\end{align*}
So $\rho_4(z) = -z$. If $\rho_k(z), \rho_l(z)$ are the solutions for $\varphi_\alpha(w^2) - i^j\varphi_\alpha(z^2) = 0$ on $\Omega = u^{-1}(A_s)$, then the same holds for $\rho_k (\rho_4(z)), \rho_l (\rho_4(z))$, thus the partition is $\{\{0\}, \{1, 5\}, \{2, 6\}, \{3, 7\}, \{4\}\}$. This answers a question on page 1761 \cite{DPW12} and clarifies item (4) of Theorem 4.2 \cite{DPW12}.
\end{itemize}

\vspace{1cm}
\noindent \textbf{Acknowledgements.}
The authors thank Kai Wang for some helpful discussions on the dual partition, and the referee for her/his comments improving the readability of this paper.


\begin{thebibliography}{99}

\bibitem{BFP85}
H. Bercovici, C. Foias, C. Pearcy, Dual Algebra with Applications to Invariant Subspace and Dilation Theory, CBMS Reg. Conf. Ser. Math., vol. 56, Amer. Math. Soc., Providence, RI, 1985.

\bibitem{Be49}
A. Beurling, On two problems concerning linear transformations in Hilbert spaces, Acta Math. 81, 239-255(1949).

\bibitem{BS84}
L. Brown, A. L. Shields, Cyclic vectors in the Dirichlet space. Trans. Amer. Math. Soc. 285, 269-303 (1984).

\bibitem{CL14}
Y. Chen, Y. Lee, Reducibility and unitary equivalence for a class of multiplication operators on the Dirichlet space, Studia Math. 220, 141-156 (2014).

\bibitem{Co78}
C. Cowen, The commutant of an analytic Toeplitz operator, Trans. Amer. Math. Soc. 239, 1-31 (1978).

\bibitem{DPW12}
R. Douglas, M. Putinar, K. Wang, Reducing subspaces for analytic multipliers of the Bergman space, J. Funct. Anal. 263, 1744-1765 (2012).

\bibitem{DSZ11}
R. Douglas, S. Sun, D. Zheng, Multiplication operators on the Bergman space via analytic continuation, Adv. Math. 226, 541-583 (2011).

\bibitem{EKR06}
O. El-Fallah, K. Kellay, T. Ransford, Cyclicity in the Dirichlet space. Ark. Mat. 44, 61-86 (2006).

\bibitem{EKR09}
O. El-Fallah, K. Kellay, T. Ransford, On the Brown-Shields conjecture for cyclicity in the Dirichlet space, Adv. Math. 222, 2196-2214 (2009).

\bibitem{Fo91}
O. Forster, Lectures on Riemann Surfaces, translated from the 1977 German original by Bruce Gilligan, reprint of the 1981 English translation, Grad. Texts in Math., vol. 81, Springer-Verlag, New York, 1991.

\bibitem {Gu}
C. Gu, Reducing subspaces of weighted shifts with operator weights. Bull. Korean Math. Soc. 53, 1471-1481 (2016).

\bibitem {Gu2}
C. Gu, Common reducing subspaces of several weighted shifts with operator weights, submitted.

\bibitem{GH11}
K. Guo, H. Huang, On multiplication operators on the Bergman space: similarity, unitary equivalence and reducing subspaces. J. Operator Theory 65, 355-378 (2011).

\bibitem{GSZZ09}
K. Guo, S. Sun, D. Zheng, C. Zhong, Multiplication operators on the Bergman space via the Hardy space of the bidisk, J. Reine Angew. Math. 628, 129-168 (2009).

\bibitem {LZ}
Y. Lu and X. Zhou, Invariant subspaces and reducing subspaces of weighted Bergman space over the bidisk, J. Math. Soc. Japan 62, 745-765 (2010).

\bibitem{Luo}
S. Luo, Reducing subspaces of multiplication operators on the Dirichlet space, Integral Equations Operator Theory 85, 539-554 (2016).

\bibitem{RSa88}
S. Richter, A. Shields, Bounded analytic functions in the Dirichlet space. Math. Z. 198, 151-159 (1988).

\bibitem{R911}
S. Richter, A representation theorem for cyclic analytic two-isometries, Trans. Amer. Math. Soc. 328, 325-349 (1991).

\bibitem{St} D. A. Stegenga, Multipliers of the Dirichlet space, Illinois J. Math. 24, 113-139 (1980).

\bibitem{RS91}
S. Richter, C. Sundberg, A formula for the local Dirichlet integral, Michigan Math. J. 38, 355-379 (1991).

\bibitem{RS92}
S. Richter, C. Sundberg, Multipliers and invariant subspaces in the Dirichlet space, J. Operator Theory 28, 167-186 (1992).

\bibitem{SL}
Y. Shi and Y. Lu, Reducing subspaces for Toeplitz operators on the polydisk, Bull. Korean Math. Soc. 50, 687-696 (2013).

\bibitem{SZ02}
M. Stessin, K. Zhu, Reducing subspaces of weighted shift operators, Proc. Amer. Math. Soc. 130, 2631-2639 (2002).

\bibitem{SlW98}
S. L. Sun, Y. Wang, Reducing subspaces of certain analytic Toeplitz operators on the Bergman space, Northeastern Math. J. 14, 147-158 (1998).

\bibitem{SZZ10}
S. Sun, D. Zheng, C. Zhong, Classification of reducing subspaces of a class of multiplication operators on the Bergman space via the Hardy space of the bidisk, Canad. J. Math. 62, 415-438 (2010).

\bibitem{Th76}
J. Thomson, The commutant of a class of analytic Toeplitz operators, II, Indiana Univ. Math. J. 25, 793-800 (1976).

\bibitem{Th77}
J. Thomson, The commutant of a class of analytic Toeplitz operators, Amer. J. Math. 99, 522-529 (1977).

\bibitem{Wa18}
J. Walsh, On the location of the roots of the Jacobian of two binary forms, and of the derivative of a rational function, Trans. Amer. Math. Soc. 19, 291-298 (1918).

\bibitem{X} J. Xiao, The $\bar{\partial}$-problem for multipliers of the Sobolev space. Manuscripta Math. 97, 217-232 (1998).

\bibitem{Zh09}
L. Zhao, Reducing subspaces for a class of multiplication operators on the Dirichlet space, Proc. Amer. Math. Soc. 137, 3091-3097 (2009).

\bibitem{Zh00}
K. Zhu, Reducing subspaces for a class of multiplication operators, J. Lond. Math. Soc. 62, 553-568 (2000).
\end{thebibliography}
\end{document}